\newtheorem{theorem}{Theorem}[section]
\newtheorem{proposition}{Proposition}[section]
\newtheorem{definition}{Definition}[section]
\newtheorem{remark}{Remark}[section]
\newtheorem{lemma}{Lemma}[section]
\numberwithin{equation}{section}
\def\d{\mathrm{d}}
\def\no{\nonumber}
\def\R{\mathbb{R}}
\def\eps{\varepsilon}
\def\u{\mathrm{u}}
\def\l{\left\langle}
\def\r{\right\rangle}
\def\inn{{\mathrm{in}}}
\def\a{\alpha}
\newcounter{wronumber}\setcounter{wronumber}{1}
\begin{document}
\title[Non-isothermal PNPF model]
			{Stability of equilibria to the model for non-isothermal electrokinetics}

\author[Ning Jiang]{Ning Jiang}
\address[Ning Jiang]{\newline School of Mathematics and Statistics, Wuhan University, Wuhan, 430072, P. R. China}
\email{njiang@whu.edu.cn}

\author[Yi-Long Luo]{Yi-Long Luo}
\address[Yi-Long Luo]
{\newline Department of Mathematics, The Chinese University of Hong Kong, Hong Kong, 999077, P. R. China}
\email{yl-luo@whu.edu.cn}

\author[Xu Zhang]{Xu Zhang}
\address[Xu Zhang]{\newline School of Mathematics and Statistics, zhengzhou University, Zhengzhou, 450001, P. R. China}
\email{xuzhang889@zzu.edu.cn}

\thanks{ \today}

\maketitle

\begin{abstract}

Recently, energetic variational approach was employed to derive models for non-isothermal electrokinetics by Liu et. al \cite{Liu-Wu-Liu-CMS2018}. In particular, the Poisson-Nernst-Planck-Fourier (PNPF) system for the dynamics of $N$-ionic species in a solvent was derived. In this paper we first reformulate PNPF ($4N+6$ equations) into an evolutional system with $N+1$ equations, and define a new total electrical charge. We then prove the constant states are stable provided that they are such that the perturbed systems around them are dissipative. However, not all positive constant solutions of PNPF are such that the corresponding perturbed systems are dissipative. We characterize a set of equilibria $\mathcal{S}_{eq}$ whose elements satisfy the conditions {(A1)} and {(A2)}, and prove it is nonempty. After then, we prove the stability of these equilibria, thus the global well-posedness of PNPF near them. \\
	
	\noindent\textsc{Keywords.} Poisson-Nernst-Planck-Fourier system; linearized dissipative law; stability of equilibria.
\end{abstract}





\section{Introduction}

\subsection{The Poisson-Nernst-Planck-Fourier system}

The Poisson-Nernst-Planck (PNP) system is one of the most extensively studied models for the transport of charged particles in many physical and biological problems, such as free moving electrons in semiconductors \cite{Jerome-1995, Markowich-1986, M-R-S-1990}, fuel cell \cite{N-P-2007, P-S-SIAM2001}, ion particles in electrokinetic fluids \cite{Ben-Chang-JFM2002, Hunter, J-J-G-B-A-2007, Lyklema}, and ion channels in cell membranes \cite{BTA-2004, Eisenberg-1996, NCE-1999}. The ionic transport can be modeled through PNP theory and its various modified versions \cite{Eisenberg-Liu-SIMA2007, G-N-E-2002, Hsien-HLLL-2015, Liu-E-JCP2014, Qiao-Tu-Lu-JCP2014, Wei-ZCX-SIAM2012, Xu-Ma-Liu-2014}. Through the energetic variational approach. Liu et. al. derived the modified PNP equations with given free energy functional and the form of entropy production \cite{Horng-Lin-Liu-2012, Hyon-E-Liu-CMS2011, Xu-Sheng-Liu-CMS2014}. However, these models are all isothermal: the temperature is fixed as a constant. For this reason, in \cite{Liu-Wu-Liu-CMS2018}, Liu et al. proposed a general framework to derive the transport equations with heat flow through the Energetic Variational Approach. According to the first law of thermodynamics, the total energy is conserved and one can use the Least Action Principle to derive the conservative forces. From the second law of thermodynamics, the entropy increases and the dissipative forces can be computed through the Maximum Dissipation Principle. Combining these two laws, they then conclude with the force balance equations and a temperature equation. In particular, they derived the following PNP equations coupled with the dynamics of temperature equation, which is named Poisson-Nernst-Planck-Fourier (PNPF) system.
\begin{equation}\label{PNPF-1}
  \left\{
    \begin{array}{l}
      \partial_t \rho_i + \nabla \! \cdot \! (\rho_i \u_i) = 0 \,, \\[2mm]
      \nu_i \rho_i ( \u_i - \u_0 ) = - k_B \nabla ( \rho_i T ) - z_i \rho_i \nabla \phi \,, \\[2mm]
      - \eps \Delta \phi = \sum_{j=1}^N z_j \rho_j \,, \ \lim_{|x| \rightarrow + \infty} \phi = 0 \,, \\[2mm]
      \Big( \sum_{i=0}^N k_B c_i \rho_i \Big) \partial_t T + \Big( \sum_{i=0}^N k_B c_i \rho_i \u_i \Big) \cdot \nabla T + \Big( \sum_{i=1}^N k_B \rho_i \nabla \! \cdot \! \u_i \Big) T \\[2mm]
      \qquad = k \Delta T + \lambda_0 |\nabla \u_0 |^2 + \sum_{i=1}^N \nu_i \rho_i |\u_i - \u_0|^2 \,, \\[2mm]
      \lambda_0 \Delta \u_0 = \nabla P_0 + \sum_{i=1}^N \nu_i \rho_i (\u_0 - \u_i) \,, \lim_{|x| \rightarrow + \infty} \u_0 = 0 \,, \\[2mm]
      \qquad \qquad \nabla \! \cdot \! \u_0 = 0 \,,
    \end{array}
  \right.
\end{equation}
for $i = 1, 2, \cdots, N$, which describes the charge dynamics with $N$ $(N \geq 2)$ ionic species. The index $i = 0$ stands for the solvent particles, which is incompressible with constant density $\rho_0 > 0$, and index $1, \cdots , N$ represents the solute species. The time and space variables $(t,x) \in \R^+ \times \R^3$. Since there are many unknown functions and physical constants, for the convenience of readers, we list them in the following tabular form:

\begin{center}
\begin{tabular}{|l|l|}
  \hline
  $\rho_i (t,x)$ & the local density distribution for $i$-th species for $i=1,\cdots,N$ \\
  $\u_i (t,x)$ & the velocity field of the $i$-th species for $i=1,\cdots,N$ \\
  $\u_0 (t,x)$ & the velocity field of the solvent particles \\
  $P_0 (t,x)$ & the Lagrange multiplier corresponding to the incompressibility of the solvent \\
  $\phi (t,x) $ & the mean electrical potential \\
  $T(t,x)$ & the temperature \\
  \hline
\end{tabular}
\end{center}

\begin{tabular}{|l|l|}
  \hline
  $z_i$ & the valences of the $i$-th species for $i=1,2,\cdots, N$ \\
  $k_B$ & the Boltzmann constant \\
  $\nu_i$ & the viscosity between the $i$-th particles and the solvent for $i=1,\cdots , N$ \\
  $k$ & the constant relating with the heat conductance \\
  $\eps$ & the dielectric constant \\
  $\lambda_0$ & the shear viscosity coefficient for the solvent \\
  $\rho_0$ & the constant density of the solvent \\
  $c_0$ & the constant related to the heat capacitance of the solvent \\
  $c_i$ & the constant related to the heat capacitance of the $i$-th species for $i=1, \cdots, N$ \\
  \hline
\end{tabular}\\

We further give some assumptions on the all coefficients throughout this paper. To cover the most general case,  the valences $z_i$ can be assumed
\begin{equation}\label{Asmp-z}
  \begin{aligned}
    z_1 \leq z_2 \leq \cdots \leq z_l < 0 < z_{l+1} \leq z_{l+2} \leq \cdots \leq z_N
  \end{aligned}
\end{equation}
for some integer $1 \leq l \leq N - 1$. Moreover, the other coefficients are all naturally considered to be positive, say,
\begin{equation}\label{Asmp-othCoefs}
  \begin{aligned}
    k_B \,, \ \nu_i \,, \ k \,, \ \eps \,, \ \lambda_0 \,, \ \rho_0 \,, \ c_0 \,, \ c_i > 0 \,,
  \end{aligned}
\end{equation}
where $i = 1, 2, \cdots, N$.

We emphasize that (as pointed out in \cite{Liu-Wu-Liu-CMS2018}) we cannot simply assume $\u_0$ is a constant, since the solvent energy and entropy are included. This is different from the original PNP equations where the
velocity, energy and entropy of the solvent are not considered. One also observes from \eqref{u0} in Subsection \ref{Subsec-Reform} below that $\u_0$ is determined by $\rho_i$ and generally not constant. The system \eqref{PNPF-1} might not be solvable without the solvent viscosity $\lambda_0$. Simply letting the temperature $T$ to be constant will {\em not} cover the original PNP system. When the temperature $T$ is constrained to be constant, the temperature equation in \eqref{PNPF-1} reduces to a nontrivial equation
\begin{equation*}
  \begin{aligned}
    \sum_{i=1}^N T k_B \rho_i \nabla \cdot \u_i = \lambda_0 |\nabla \u_0|^2 + \sum_{i=1}^N \nu_i \rho_i |\u_i - \u_0|^2 \,,
  \end{aligned}
\end{equation*}
which will be such that the system is overdetermined. This can be observed intuitively through the reformulations \eqref{Evol-rho-i}-\eqref{Evol-T} in Subsection \ref{Subsec-Reform} below, which consists of $N+1$ equations associated with $N+1$ unknowns (the necessary features of a determined system). Even the temperature $T$ is assumed to be constant, the equation \eqref{Evol-T} does not vanish, which will be such that the reduced system from \eqref{Evol-rho-i}-\eqref{Evol-T} is overdetermined. We consequently fail to cover the original PNP system by simply letting the temperature $T$ to be constant. In this sense, the system PNPF is not just simply adding a temperature equation comparing to PNP system.

All the constant states are trivial solutions of PNPF \eqref{PNPF-1}. So a natural question is about the long time stability of these equilibria, in other words, the global in time well-posedness around them. We discover that not all constant are stable, in the sense that the PNPF linearized around the constant states are not necessarily dissipative. The main novelty of this paper is that we characterize the set of the equilibria around which the PNPF are stable. We prove this set is nonempty under some coefficients assumptions, and analyze their long time stability, and thus prove the global in time well-posedness around them.

\subsection{Reformulation of PNPF system}\label{Subsec-Reform}

The system \eqref{PNPF-1} looks complicated (with $4N+6$ unknown functions). However, it only contains $N+1$ evolutional equations. It is just like 3-$D$ incompressible Navier-Stokes equations, which include 4 equations, but there is no evolutional equation for the pressure. A traditional treatment is using Leray projection to rewrite them into 3 equations only for velocity and solve the pressure by a Poisson equation. We can treat the PNPF here in the same spirit, although the process will be more tedious. In fact, PNPF can be transformed to an evolutional system with only $N+1$ equations for $\rho_i (t,x)$ $(i=1,\cdots,N)$ and $T (t,x)$. First, from the Poisson equation in \eqref{PNPF-1}, we know
\begin{equation}\label{phi}
  \begin{aligned}
    \phi = \frac{1}{\eps} (- \Delta)^{-1} \Big( \sum_{j=1}^N z_j \rho_j \Big) \,,,
  \end{aligned}
\end{equation}
which gives
\begin{equation}\label{ui-u0}
  \begin{aligned}
    \u_i - \u_0 = - \frac{k_B}{\eps \nu_i \rho_i} \nabla (\rho_i T) - \frac{z_i}{\nu_i} \nabla (- \Delta)^{-1} \Big( \sum_{j=1}^N z_j \rho_j \Big) \,.
  \end{aligned}
\end{equation}
Let $\mathcal{P}$ be the usual Leray projection, then the second and the last two equations of \eqref{PNPF-1} imply that
\begin{equation}\label{u0}
  \begin{aligned}
    \u_0 = - (- \Delta )^{-1} \left\{ \sum_{i=1}^N \mathcal{P} \left[ \frac{z_i}{\eps \lambda_0} \rho_i \nabla ( - \Delta )^{-1} \left( \sum_{j=1}^N z_j \rho_j \right) \right] \right\} \,,
  \end{aligned}
\end{equation}
and
\begin{equation}\label{P0}
  \begin{aligned}
    P_0 = - \sum_{i=1}^N k_B \rho_i T + (- \Delta )^{-1} \nabla \! \cdot \! \left[ \sum_{i=1}^N \frac{z_i}{\eps} \rho_i \nabla (- \Delta )^{-1} \left( \sum_{j=1}^N z_j \rho_j \right) \right] \,.
  \end{aligned}
\end{equation}
Thus, we have
\begin{equation}\label{ui}
  \begin{aligned}
    \u_i = & - \frac{k_B}{\eps \nu_i \rho_i} \nabla (\rho_i T) - \frac{z_i}{\nu_i} \nabla (- \Delta)^{-1} \Big( \sum_{j=1}^N z_j \rho_j \Big) \\
    & - (- \Delta )^{-1} \left\{ \sum_{i=1}^N \mathcal{P} \left[ \frac{z_i}{\eps \lambda_0} \rho_i \nabla ( - \Delta )^{-1} \left( \sum_{j=1}^N z_j \rho_j \right) \right] \right\} \,.
  \end{aligned}
\end{equation}
Then, the first $N$ evolutions of $\rho_i$ in \eqref{PNPF-1} read
\begin{equation}\label{Evol-rho-i}
  \begin{aligned}
    \partial_t \rho_i - \frac{k_B}{\eps \nu_i} \Delta ( \rho_i T ) = & \nabla \! \cdot \! \left[ \frac{z_i \rho_i}{\nu_i} \nabla (- \Delta)^{-1} \Big( \sum_{j=1}^N z_j \rho_j \Big)  \right] \\
    & + \nabla \! \cdot \! \left\{ \rho_i (- \Delta)^{-1} \left[ \sum_{i=1}^N \mathcal{P} \left( \frac{z_i \rho_i}{\eps \lambda_0} \nabla (- \Delta )^{-1} \Big( \sum_{j=1}^N z_j \rho_j \Big) \right) \right] \right\}
  \end{aligned}
\end{equation}
for $i = 1, \cdots , N$. Moreover, we can deduce from plugging the relations \eqref{ui-u0}, \eqref{u0} and \eqref{ui} into the forth equation of \eqref{PNPF-1} that
  \begin{align}\label{Evol-T}
    \no \Big( \sum_{i=0}^N k_B c_i \rho_i \Big) \partial_t T - k \Delta T = \sum_{i=1}^N \nu_i \rho_i \left| \frac{k_B}{\eps \nu_i \rho_i} \nabla (\rho_i T) + \frac{e z_i}{\nu_i} \nabla (- \Delta)^{-1} \Big( \sum_{j=1}^N z_j \rho_j \Big) \right|^2 \\
    \no + \lambda_0 \left| \nabla (- \Delta )^{-1} \left\{ \sum_{i=1}^N \mathcal{P} \left[ \frac{z_i}{\eps \lambda_0} \rho_i \nabla ( - \Delta )^{-1} \left( \sum_{j=1}^N z_j \rho_j \right) \right] \right\} \right|^2 \\
    \no + \sum_{i=0}^N \left( \frac{k_B^2 c_i}{\eps \nu_i} \nabla (\rho_i T) + \frac{k_B c_i z_i}{\nu_i} \rho_i \nabla (- \Delta)^{-1} \Big( \sum_{j=1}^N z_j \rho_j \Big) \right) \cdot \nabla T \\
    \no + \sum_{i=0}^N k_B c_i \rho_i (- \Delta )^{-1} \left\{ \sum_{i=1}^N \mathcal{P} \left[ \frac{z_i}{\eps \lambda_0} \rho_i \nabla ( - \Delta )^{-1} \left( \sum_{j=1}^N z_j \rho_j \right) \right] \right\} \cdot \nabla T \\
    \no + \sum_{i=1}^N k_B \rho_i \nabla \! \cdot \! \left[ \frac{k_B}{\eps \nu_i \rho_i} \nabla (\rho_i T) + \frac{z_i}{\nu_i} \nabla (- \Delta)^{-1} \Big( \sum_{j=1}^N z_j \rho_j \Big) \right] T \\
    + \sum_{i=1}^N k_B \rho_i \nabla \! \cdot \! \left\{ (- \Delta )^{-1} \left\{ \sum_{i=1}^N \mathcal{P} \left[ \frac{z_i}{\eps \lambda_0} \rho_i \nabla ( - \Delta )^{-1} \left( \sum_{j=1}^N z_j \rho_j \right) \right] \right\} \right\} T \,.
  \end{align}
One notices that the equation \eqref{Evol-rho-i} coupled the evolution \eqref{Evol-T} is a closed system associated with the unknown functions $\rho_i$ and $T$, which can be solved under the following initial conditions
\begin{equation}\label{IC-PNPF}
  \begin{aligned}
    \rho_i (0, x) = \rho_i^\inn (x) \,, \ i = 1,2, \cdots, N \,, \ T (0, x) = T^\inn (x) \,.
  \end{aligned}
\end{equation}
As for the initial  data for $\phi$, it can be determined by
\begin{equation}\label{IC-PNPF-phi}
  \begin{aligned}
    - \Delta \phi^\inn = \tfrac{1}{\eps} \sum_{i=1}^N z_i \rho_i^\inn \,, \ \lim_{|x| \rightarrow + \infty} \phi^\inn = 0 \,.
  \end{aligned}
\end{equation}

However, the system \eqref{Evol-rho-i}-\eqref{Evol-T} still looks tedious. We can further simplify it. More precisely, Let
$$m = \sum_{j=1}^N z_j \rho_j$$
which is called the {\em total electrical charge}. Then the system \eqref{PNPF-1} can be rewritten as
\begin{equation}\label{PNPF-2}
\left\{
\begin{array}{l}
\partial_t \rho_i + \u_0 \cdot \nabla \rho_i - \tfrac{k_B}{\nu_i} \Delta (\rho_i T) - \nabla \cdot \big( \tfrac{z_i }{\nu_i} \rho_i \nabla \phi \big) = 0 \,, \ i = 1, \cdots , N \,, \\[2mm]
\qquad \qquad - \Delta \phi = \tfrac{1}{\eps} m \,, \\[2mm]
\lambda_0 \Delta \u_0 = \nabla P_0 + \sum_{i=1}^N k_B  \nabla (\rho_i T) + m \nabla \phi \,, \\[2mm]
\qquad \qquad \qquad \nabla \cdot \u_0 = 0 \,, \\[2mm]
\Big( \sum_{i=0}^N k_B c_i \rho_i \Big) \partial_t T - k \Delta T + \sum_{i=0}^N k_B c_i \rho_i \u_0 \cdot \nabla T - \sum_{i=1}^N \tfrac{k_B^2 c_i}{\nu_i} \nabla (\rho_i T) \cdot \nabla T \\[2mm]
= \lambda_0 |\nabla \u_0|^2 + \sum_{i=1}^N \tfrac{1}{\nu_i \rho_i} \left| k_B \nabla (\rho_i T) + z_i \rho_i \nabla \phi \right|^2 + \sum_{i=1}^N \tfrac{k_B c_i z_i}{\nu_i} \rho_i \nabla \phi \cdot \nabla T \\[2mm]
\qquad + \sum_{i=1}^N \Big( \tfrac{k_B^2}{\nu_i} \Delta (\rho_i T) - \tfrac{k_B^2}{\nu_i \rho_i} \nabla \rho_i \cdot \nabla (\rho_i T) - \tfrac{k_B z_i}{\eps \nu_i} \rho_i m \Big) T \\[2mm]
\partial_t m + \u_0 \cdot \nabla m - \tfrac{k_B}{\nu} \Delta (m T) = \sum_{i=1}^N \tfrac{z_i^2}{\nu_i} \nabla \cdot ( \rho_i \nabla \phi ) + k_B \sum_{i=1}^N ( \tfrac{1}{\nu_i} - \tfrac{1}{\nu} ) z_i \Delta (\rho_i T ) \,.
\end{array}
\right.
\end{equation}
where $\nu = \tfrac{N}{\sum_{j=1}^N \tfrac{1}{\nu_j}} > 0$ is the harmonic average of the viscosities $\nu_1$, $\nu_2$, $\cdots$, $\nu_N$. Furthermore, if we consider the following perturbations
\begin{equation}\label{Perturbations}
\begin{aligned}
\rho_i = \delta_i + n_i \,, \, i = 1, \cdots , N \,, \, T = 1 + \theta \,,
\end{aligned}
\end{equation}
where $\delta_i > 0$ are arbitrarily fixed constants with the constraint $\sum_{j=1}^N z_j \delta_j = 0$, then the functions $( n_1, \cdots, n_N, \theta, m, \phi,  \u_0, P_0)$ subjects to the following equations
\begin{equation}\label{PNPF-Perturbed}
\left\{
\begin{array}{l}
\partial_t n_i - \tfrac{k_B}{\nu_i} \Delta n_i = \tfrac{k_B \delta_i}{\nu_i} \Delta \theta + \tfrac{z_i \delta_i}{\nu_i} \Delta \phi + R_{n_i} \,, \ i = 1, 2, \cdots, N \,, \\[2mm]
\qquad \qquad \qquad - \Delta \phi = \tfrac{1}{\eps} m \,, \\[2mm]
\lambda_0 \Delta \u_0 = \nabla P_0 + \sum_{i=1}^N k_B \nabla (n_i + \delta_i \theta) + R_{\u_0} \,, \\[2mm]
\qquad \qquad \qquad \nabla \cdot \u_0 = 0 \,, \\[2mm]
a \partial_t \theta - b \Delta \theta = \sum_{i=1}^N \tfrac{k_B^2}{\nu_i} \Delta n_i + \sum_{i=1}^N \tfrac{k_B z_i \delta_i}{ \nu_i} \Delta \phi + R_\theta \,, \\[2mm]
\partial_t m - \tfrac{k_B}{\nu} \Delta m + \tfrac{1}{\eps} \big( \sum_{i=1}^N \tfrac{z_i^2 \delta_i}{\nu_i} \big) m = k_B \sum_{i=1}^N ( \tfrac{1}{\nu_i} - \tfrac{1}{\nu} ) z_i \Delta n_i + k_B \sum_{i=1}^N \tfrac{z_i \delta_i}{\nu_i} \Delta \theta + R_m \,,
\end{array}
\right.
\end{equation}
where
\begin{equation}\label{Coeffs-ab}
\begin{aligned}
a = k_B c_0 \rho_0 + \sum_{i=1}^N k_B c_i \delta_i > 0 \,, \ b = k + \sum_{i=1}^N \frac{k_B^2 \delta_i}{\nu_i} > 0 \,,
\end{aligned}
\end{equation}
and the nonlinear terms $R_{n_i} : = R_{n_i} (n_i, m, \phi, \u_0)$, $R_{\u_0} : = R_{\u_0} ( n_1, \cdots, n_N, \theta, m, \phi) $, $R_\theta : = R_\theta ( n_1, \cdots, n_N, \theta, m, \phi, \u_0) $ and $R_m : = R_m ( n_1, \cdots, n_N, \theta, m, \phi, \u_0)$ are defined as follows:
\begin{align}
\label{R-ni} & R_{n_i} = - \u_0 \cdot \nabla n_i - \tfrac{z_i}{\eps \nu_i} n_i m + \tfrac{z_i}{\nu_i} \nabla n_i \cdot \nabla \phi \,, \\
\label{R-u0} & R_{\u_0} = \sum_{i=1}^N k_B \nabla (n_i \theta ) + m \nabla \phi \,, \\
\label{R-m} & R_m = - \u_0 \cdot \nabla m + k_B \sum_{i=1}^N \tfrac{z_i}{\nu_i} \Delta (n_i \theta) - \tfrac{1}{\eps} \sum_{i=1}^N \tfrac{z_i^2}{\nu_i} n_i m + \sum_{i=1}^N \tfrac{z_i^2}{\nu_i} \nabla n_i \cdot \nabla \phi \,, \\
\label{R-theta} & R_\theta = - a \u_0 \cdot \nabla \theta + \tfrac{a R_\theta^\star }{a + \sum_{i=1}^N k_B c_i n_i} - \tfrac{\sum_{i=1}^N k_B c_i n_i}{a + \sum_{i=1}^N k_B c_i n_i} \Big( b \Delta \theta + \sum_{i=1}^N \tfrac{k_B^2}{\nu_i} \Delta n_i - \sum_{i=1}^N \tfrac{k_B z_i \delta_i}{\nu_i} m \Big)   \,.
\end{align}
Here the term $R_\theta^\star := R_\theta^\star ( n_1, \cdots, n_N, \theta, m, \phi, \u_0)$ is of the form
\begin{equation}\label{R-theta-*}
\begin{aligned}
R_\theta^\star = & \sum_{i=1}^N \tfrac{k_B^2 c_i}{\nu_i} \nabla (n_i + \delta_i \theta + n_i \theta) \cdot \nabla \theta + \sum_{i=1}^N \tfrac{k_B c_i z_i}{\nu_i} (\delta_i + n_i) \nabla \phi \cdot \nabla \theta \\
& + \sum_{i=1}^N \tfrac{k_B^2}{\nu_i} \Delta (n_i \theta) - \sum_{i=1}^N \tfrac{k_B^2 z_i}{\eps \nu_i} n_i m - \sum_{i=1}^N \frac{k_B^2}{\nu_i (\delta_i + n_i)} \nabla n_i \cdot \nabla (n_i + \delta_i \theta + n_i \theta) \\
& + \sum_{i=1}^N \tfrac{k_B^2}{\nu_i} \Delta ( n_i + \delta_i \theta + n_i \theta ) \theta - \sum_{i=1}^N \tfrac{k_B z_i}{\eps \nu_i} (\delta_i + n_i) m \theta \\
& + \lambda_0 |\nabla \u_0|^2 - \sum_{i=1}^N \frac{k_B^2}{\nu_i (\delta_i + n_i)} \theta \nabla n_i \cdot \nabla (n_i + \delta_i \theta + n_i \theta) \\
& + \sum_{i=1}^N \frac{1}{\nu_i (\delta_i + n_i)} \left| k_B \nabla (n_i + \delta_i \theta + n_i \theta) + z_i (\delta_i + n_i) \nabla \phi \right|^2 \,.
\end{aligned}
\end{equation}
The details of the derivations on the forms \eqref{PNPF-2} and \eqref{PNPF-Perturbed} can be referred to Lemma \ref{Lmm-Syst-Transform}.

\begin{remark}
	The evolution of the total electrical charge $m$ in \eqref{PNPF-Perturbed} is not an independent equation, because of the relation $m =\sum_{j=1}^N z_j \rho_j = \sum_{j=1}^N z_j n_j$. However, it has the dissipative effect $\tfrac{k_B}{\nu} \Delta m$ and the damping effect $\tfrac{1}{\eps} \big( \sum_{i=1}^N \tfrac{z_i^2 \delta_i}{\nu_i} \big) m$, which will play an essential role in proving the global in time solutions near the admissible equilibria.
\end{remark}

\subsection{Notations and main results}

To state our results, we collect here some notations. The symbol $A \lesssim B$ represents $A \leq C B$ for some harmless constant $C > 0$. We further denote by $A \thicksim B$ if there are two constants $C_1 , C_2 > 0$, independent of $\eps > 0$, such that $C_1 A \leq B \leq C_2 A$. For convenience, we also denote by
$$ L^p = L^p (\R^3) $$
for all $p \in [ 1, \infty ]$, which endows with the norm $ \| f \|_{L^p} = \left( \int_{\R^3} |f(x)|^p \d x \right)^\frac{1}{p} $ for $p \in [ 1 , \infty )$ and $\| f \|_{L^\infty} = \underset{x \in \R^3}{\textrm{ess sup}} \, |f(x)|$. For $p = 2$, we use the notation $ \langle \cdot \,, \cdot \rangle $ to represent the inner product on the Hilbert space $L^2$.

For any multi-index $ \a = ( \a_1, \a_2, \a_3 )$ in $\mathbb{N}^3$, we denote the $\a$-th partial derivative by
\begin{equation*}
  \partial^\a = \partial^{\a_1}_{x_1} \partial^{\a_2}_{x_2} \partial^{\a_3}_{x_3} \,.
\end{equation*}
If each component of $\a \in \mathbb{N}^3$ is not greater than that of $\tilde{\a}$'s, we denote by $\a \leq \tilde{\a}$. The symbol $\a < \tilde{\a}$ means $\a \leq \tilde{\a}$ and $|\a| < | \tilde{\a} |$, where $|\a|= \a_1 + \a_2 + \a_3$. We define the Sobolev space $H^s = H^s (\R^3)$ by the norm
\begin{align*}
  \| f \|_{H^s} = \bigg(  \sum_{|\a| \leq s} \| \partial^\a f \|^2_{L^2} \bigg)^\frac{1}{2} < \infty \,.
\end{align*}

Now we state our main theorem as follows:

\begin{theorem}\label{Main-Thm}
	Let $s \geq 3$, $N \geq 2$ be any fixed integers, and the coefficients satisfy \eqref{Asmp-z}, \eqref{Asmp-othCoefs} and the further assumption.
	\begin{equation}\label{Key-Asmp}
	  \begin{aligned}
	    \max_{1 \leq i \leq N} ( 1 - \tfrac{\nu_i}{\nu} )^2 < \tfrac{1}{2} \,,
	  \end{aligned}
	\end{equation}
	where $\nu = \tfrac{N}{\sum_{i=1}^N \tfrac{1}{\nu_i}} > 0$ is the harmonic average of the viscosities $\nu_1, \nu_2, \cdots, \nu_N > 0$. Let $(\delta_1, \cdots, \delta_N)$ belong to the equilibria set $\mathcal{S}_{eq}$ given in Definition \ref{Def-AEF}. There is a small constant $\xi_0 > 0$, depending only on $s$, $N$, $\delta_1, \cdots, \delta_N$ and the all coefficients, such that if
	\begin{equation}
	  \begin{aligned}
	    E^\inn : = \sum_{i=1}^N \| \rho_i^\inn - \delta_i \|^2_{H^s} + \| T^\inn - 1 \|^2_{H^s} + \| \nabla \phi^\inn \|^2_{H^s} \leq \xi_0 \,,
	  \end{aligned}
	\end{equation}
	then the Cauchy problem \eqref{PNPF-1}-\eqref{IC-PNPF} admits a unique global solution $(\rho_1, \cdots, \rho_N, T)$,
	\begin{equation*}
	  \begin{aligned}
	    \rho_1 - \delta_1, \cdots, \rho_N - \delta_N, T - 1 \in L^\infty (\R^+; H^s) \,, \ \nabla \rho_1 \, \cdots, \nabla \rho_N, \nabla T \in L^2 (\R^+; H^s) \,.
	  \end{aligned}
	\end{equation*}
	Moreover, there holds
	\begin{equation}\label{Global-Energy}
	  \begin{aligned}
	    & \sup_{t \geq 0} \big( \| \rho_1 - \delta_1 \|^2_{H^s} + \cdots + \| \rho_N - \delta_N \|^2_{H^s} + \| T - 1 \|^2_{H^s} + \| \nabla \phi \|^2_{H^s} \big) \\
	    & + \int_0^\infty \big( \| \nabla \rho_1 \|^2_{H^s} + \cdots + \| \nabla \rho_N \|^2_{H^s} + \| \nabla T \|^2_{H^s} + \| \nabla \u_0 \|^2_{H^s} \big) \d t \leq C_0 E^\inn
	  \end{aligned}
	\end{equation}
	for some constant $C_0 > 0$, depending only on $s$, $N$, $\delta_1, \cdots, \delta_N$ and the all coefficients. Furthermore, the functions $(\phi, \u_0, \u_1, \cdots, \u_N, P_0)$, determined by $(\rho_1, \cdots, \rho_N, T)$ through \eqref{phi}, \eqref{u0}, \eqref{ui} and \eqref{P0}, respectively, satisfy
	\begin{equation*}
	  \begin{aligned}
	    & \nabla \phi, \Delta \phi \in L^\infty (\R^+; H^s) \,, \ \nabla \Delta \phi \in L^2 (\R^+; H^s) \,, \\
	    & \nabla \u_0 \in L^2 (\R^+; H^s) \,, \Delta \u_0 \in L^\infty (\R^+; H^s) \cap L^2_{loc} (\R^+; H^{s+1}) \,, \\
	    & \nabla P_0 \in L^\infty (\R^+; H^{s-1}) \cap L^2_{loc} (\R^+; H^s) \,, \\
	    & \nabla \u_1 , \nabla \u_2, \cdots , \nabla \u_N \in L^2 (\R^+; H^{s-1}) \,.
	  \end{aligned}
	\end{equation*}
\end{theorem}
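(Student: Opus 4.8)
The plan is to prove Theorem~\ref{Main-Thm} by combining a local well-posedness result, a uniform-in-time a priori estimate, and a continuity argument. I would work throughout with the equivalent perturbed, reduced formulation \eqref{PNPF-Perturbed} for $(n_1,\dots,n_N,\theta,m,\phi,\u_0,P_0)$, whose equivalence with the Cauchy problem \eqref{PNPF-1}--\eqref{IC-PNPF} is supplied by Lemma~\ref{Lmm-Syst-Transform}. A standard iteration scheme---freeze the coefficient $a+\sum_ik_Bc_in_i$ together with the transport, reaction and elliptic nonlinearities, solve the resulting linear parabolic system, and invoke that $H^s$ is a Banach algebra for $s\ge3$---yields a unique local solution on a maximal interval $[0,T_{\max})$, with the blow-up alternative: $T_{\max}<\infty$ forces $\limsup_{t\uparrow T_{\max}}\mathcal{E}_s(t)=+\infty$, where
\begin{align*}
\mathcal{E}_s&=\sum_{i=1}^N\|n_i\|_{H^s}^2+\|\theta\|_{H^s}^2+\|m\|_{H^s}^2+\|\nabla\phi\|_{H^s}^2,\\
\mathcal{D}_s&=\sum_{i=1}^N\|\nabla n_i\|_{H^s}^2+\|\nabla\theta\|_{H^s}^2+\|\nabla m\|_{H^s}^2+\|m\|_{H^s}^2\\
&\qquad\quad+\|\nabla\phi\|_{H^s}^2+\|\nabla\u_0\|_{H^s}^2.
\end{align*}
The goal of the a priori step is the differential inequality $\tfrac{d}{dt}\mathcal{E}_s+c_0\mathcal{D}_s\le C\,\mathcal{E}_s^{1/2}\mathcal{D}_s$ modulo harmless higher-order remainders; once $E^\inn\le\xi_0$ is small enough (recall $\mathcal{E}_s(0)\lesssim E^\inn$), a continuity argument closes the bootstrap and promotes the local solution to a global one satisfying \eqref{Global-Energy}. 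Note $m=\sum_iz_in_i=-\eps\Delta\phi$, so $\|m\|_{H^s}$ and $\|\nabla\phi\|_{H^s}$ are slaved to $\sum_i\|n_i\|_{H^s}$; nonetheless carrying the (non-independent) $m$-equation of \eqref{PNPF-Perturbed} is precisely what furnishes the extra dissipation $\|\nabla m\|_{H^s}^2$ and the dampings $\|m\|_{H^s}^2$, $\|\nabla\phi\|_{H^s}^2$ recorded in $\mathcal{D}_s$.

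\textbf{The linearized dissipative law: the heart of the proof.} Apply $\partial^\a$, $|\a|\le s$, to the $n_i$-, $\theta$- and $m$-equations of \eqref{PNPF-Perturbed} and pair in $L^2$ with $\mu_i\partial^\a n_i$, $\mu_\theta\partial^\a\theta$ and $\partial^\a\phi$ respectively (positive weights to be chosen), keeping the Poisson terms as $\Delta\phi$, integrating by parts, and using $\langle\partial^\a m,\partial^\a\phi\rangle=\eps\|\nabla\partial^\a\phi\|_{L^2}^2$. The linear part then produces: (i) a diffusion quadratic form in $(\nabla\partial^\a n_1,\dots,\nabla\partial^\a n_N,\nabla\partial^\a\theta)$ built from $-\tfrac{k_B}{\nu_i}\Delta n_i$, $-b\Delta\theta$ and the cross terms $\tfrac{k_B\delta_i}{\nu_i}\Delta\theta$, $\sum_i\tfrac{k_B^2}{\nu_i}\Delta n_i$; (ii) the off-diagonal gradient couplings $\langle\nabla\partial^\a\phi,\nabla\partial^\a n_i\rangle$ and $\langle\nabla\partial^\a\phi,\nabla\partial^\a\theta\rangle$ coming from the Poisson terms; (iii) the pure damping $\big(\sum_iz_i^2\delta_i/\nu_i\big)\|\nabla\partial^\a\phi\|_{L^2}^2$ and dissipation $\tfrac{k_B}{\nu\eps}\|\partial^\a m\|_{L^2}^2$ out of the $m$-equation; (iv) the zeroth-order, rank-one reaction term $-\tfrac1\eps\big\langle\partial^\a m,\sum_i\mu_i\tfrac{z_i\delta_i}{\nu_i}\partial^\a n_i\big\rangle$ arising from $\tfrac{z_i\delta_i}{\nu_i}\Delta\phi=-\tfrac{z_i\delta_i}{\eps\nu_i}m$ in the $n_i$-equations; and (v) the non-symmetrizable, same-order coupling $-k_B\sum_i\big(\tfrac1{\nu_i}-\tfrac1\nu\big)z_i\langle\nabla\partial^\a n_i,\nabla\partial^\a\phi\rangle$ from the $m$-equation. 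With the thermodynamically natural weights $\mu_i\delta_i=\mu_\theta k_B$ the block (i) is symmetric and positive definite (its positive-definiteness being equivalent, by a Schur-complement computation, to $b>\sum_ik_B^2\delta_i/\nu_i$, i.e.\ to $k>0$ by \eqref{Coeffs-ab}); but those same weights turn the rank-one form (iv) into a product of two non-proportional linear functionals of $(n_1,\dots,n_N)$, which cannot be signed and whose component in the directions not damped by $\|m\|_{L^2}^2$ is exactly what threatens the $|\a|=0$ part of the estimate. Term (v) I would split off by Young's inequality and absorb into the $\tfrac{k_B}{\nu}\|\nabla\partial^\a m\|^2$ and $\sum_i\tfrac{k_B}{\nu_i}\|\nabla\partial^\a n_i\|^2$ dissipations; here the hypothesis \eqref{Key-Asmp}, $\max_i(1-\nu_i/\nu)^2<\tfrac12$, is precisely what leaves enough room. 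The role of the conditions (A1)--(A2) defining $\mathcal{S}_{eq}$ (Definition~\ref{Def-AEF}) is to guarantee the existence of a weight family $(\mu_1,\dots,\mu_N,\mu_\theta)$ for which the diffusion block (i) remains coercive while the reaction term (iv) is dominated by (iii) and the diffusion, so that the total linear quadratic form in $(\nabla\partial^\a n_i,\nabla\partial^\a\theta,\nabla\partial^\a\phi,\partial^\a m)$ is coercive and the linear part of $\tfrac{d}{dt}\mathcal{E}_s$ is bounded above by $-c_0\mathcal{D}_s$; that such a family exists under \eqref{Asmp-z}--\eqref{Asmp-othCoefs} (nonemptiness of $\mathcal{S}_{eq}$) is the separate structural result established earlier.

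\textbf{Nonlinear estimates.} A structural simplification worth recording first: applying the Leray projection $\mathcal{P}$ to the $\u_0$-equation kills $\nabla P_0$ and \emph{every} gradient in the source, in particular the terms $\sum_ik_B\nabla(n_i+\delta_i\theta)$ and $\sum_ik_B\nabla(n_i\theta)$ occurring in $R_{\u_0}$, leaving $\lambda_0\Delta\u_0=\mathcal{P}(m\nabla\phi)=-\eps\,\mathcal{P}\,\div(\nabla\phi\otimes\nabla\phi)$; thus $\u_0$ has no linear part and $\|\nabla\u_0\|_{H^s}\lesssim\|\nabla\phi\otimes\nabla\phi\|_{H^s}\lesssim\|\nabla\phi\|_{H^s}^2\lesssim\mathcal{E}_s^{1/2}\mathcal{D}_s^{1/2}$ by Calder\'on--Zygmund boundedness of $\nabla\div(-\Delta)^{-1}\mathcal{P}$ on $H^s$ and the $H^s$-algebra. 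For the remainders $R_{n_i}$, $R_m$, $R_\theta$, $R_\theta^\star$ from \eqref{R-ni}, \eqref{R-m}, \eqref{R-theta}, \eqref{R-theta-*} I would then argue termwise: a transport term $\u_0\cdot\nabla(\cdot)$ loses its leading order by the divergence-free condition and an integration by parts, its commutator being $\lesssim\|\nabla\u_0\|_{H^{s-1}}\,\mathcal{D}_s^{1/2}$; every quadratic product---$n_im$, $\nabla n_i\cdot\nabla\phi$, $\Delta(n_i\theta)$, $|\nabla\u_0|^2$, $\big|k_B\nabla(n_i+\delta_i\theta+n_i\theta)+z_i(\delta_i+n_i)\nabla\phi\big|^2$ and the rest---contains a genuine gradient factor after Leibniz-expanding $\partial^\a$ and, where needed, one integration by parts against the test function, hence is $\lesssim\mathcal{E}_s^{1/2}\mathcal{D}_s$ via $H^{s-1}\hookrightarrow L^\infty(\R^3)$ (valid since $s\ge3$) and the algebra property; and the denominators $\delta_i+n_i$, $a+\sum_ik_Bc_in_i$ stay bounded away from $0$ once $\mathcal{E}_s\le\xi_0$ is small, so the quotients in $R_\theta$, $R_\theta^\star$ are harmless.

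\textbf{Recovery of the auxiliary fields; the main obstacle.} Once \eqref{Global-Energy} is established on $[0,\infty)$, the claimed regularity of $(\phi,\u_0,\u_1,\dots,\u_N,P_0)$ follows from the explicit formulas \eqref{phi}, \eqref{u0}, \eqref{ui}, \eqref{P0}: from $-\Delta\phi=\tfrac1\eps m$ one has $\|\nabla^2\phi\|_{H^s}=\tfrac1\eps\|m\|_{H^s}$ and $\|\nabla\Delta\phi\|_{H^s}=\tfrac1\eps\|\nabla m\|_{H^s}\in L^2(\R^+)$; elliptic regularity together with the $H^s$-boundedness of $\mathcal{P}$ and of $\nabla(-\Delta)^{-1}$ applied to the quadratically small sources gives the $\u_0$- and $P_0$-bounds; and $\u_i=\u_0-\tfrac{k_B}{\eps\nu_i\rho_i}\nabla(\rho_iT)-\tfrac{z_i}{\nu_i}\nabla(-\Delta)^{-1}m$ from \eqref{ui} yields $\nabla\u_i\in L^2(\R^+;H^{s-1})$ from the already-controlled $\nabla\rho_i$, $\nabla T$, $\nabla\u_0$, $\nabla\phi$. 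The one genuine obstacle in this program is the linearized dissipation step of the second paragraph: identifying the right weight family and verifying, via (A1)--(A2), that the coupled diffusion-plus-damping quadratic form for $(n_1,\dots,n_N,\theta,m,\nabla\phi)$ is coercive uniformly in frequency while still leaving enough of the $\|\nabla m\|^2$ and $\|\nabla n_i\|^2$ budget to absorb, under \eqref{Key-Asmp}, the non-symmetrizable coupling $k_B\sum_i\big(\tfrac1{\nu_i}-\tfrac1\nu\big)z_i\Delta n_i$; everything else is routine energy-method bookkeeping.
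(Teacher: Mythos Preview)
Your overall strategy---local theory, a priori estimate built on the linearized dissipative law guaranteed by $(\delta_1,\dots,\delta_N)\in\mathcal{S}_{eq}$, then a continuity argument---matches the paper's exactly. Two points deserve comment.

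First, your treatment of $\u_0$ is a genuine (minor) alternative. You observe $\lambda_0\Delta\u_0=\mathcal{P}(m\nabla\phi)=-\eps\,\mathcal{P}\,\div(\nabla\phi\otimes\nabla\phi)$ and read off $\|\nabla\u_0\|_{H^s}\lesssim\|\nabla\phi\|_{H^s}^2$ directly by Calder\'on--Zygmund. The paper instead pairs the $\u_0$-equation with $\u_0$ and must then handle the $|\alpha|=0$ pairing $-\tfrac1{\lambda_0}\langle m\nabla\phi,\u_0\rangle$, where $\|\u_0\|_{L^2}$ is unavailable; it does so via the identity $-\langle m\nabla\phi,\u_0\rangle=\eps\langle\nabla\phi\otimes\nabla\phi,\nabla\u_0\rangle$. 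Your route is cleaner and avoids this issue entirely.

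Second, there is a parallel issue you do \emph{not} escape: in the $|\alpha|=0$ pairing $\langle R_m,\phi\rangle$, neither $\|\phi\|_{L^2}$ nor $\|\u_0\|_{L^2}$ is available, and your blanket ``one integration by parts against the test function'' is not enough. The paper singles out two cancellations that are essential here: writing $-\u_0\cdot\nabla m=\eps\,\u_0\cdot\nabla\Delta\phi$ one has $\langle\u_0\cdot\nabla\Delta\phi,\phi\rangle=-\langle\nabla\phi\otimes\nabla\phi,\nabla\u_0\rangle$, and the two remaining $R_m$-terms combine as $\langle n_i\Delta\phi+\nabla n_i\cdot\nabla\phi,\phi\rangle=-\langle n_i,|\nabla\phi|^2\rangle$. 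Without recording these explicitly you cannot close the $L^2$-level estimate, since e.g.\ $\langle n_im,\phi\rangle$ taken alone has no gradient to exploit. This is the one place where your sketch is genuinely incomplete.

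A small clarification on the role of \eqref{Key-Asmp}: in the paper, once $(\delta_1,\dots,\delta_N)\in\mathcal{S}_{eq}$ is assumed, the coercivity of the linear part---including absorption of the $k_B\sum_i(\tfrac1{\nu_i}-\tfrac1\nu)z_i\Delta n_i$ coupling you label (v)---follows from conditions (H1)--(H4) alone, with all cross terms put in \emph{gradient} form and split by Young's inequality with the parameters built into the definition of $\mathcal{S}_{eq}$. Hypothesis \eqref{Key-Asmp} is used only in Proposition~\ref{Prop-AEF} to show $\mathcal{S}_{eq}\neq\emptyset$; it plays no direct role in the proof of Theorem~\ref{Main-Thm} itself. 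Your zeroth-order reading of the Poisson term in the $n_i$-equations (your item (iv)) is an unnecessary detour---at the $|\alpha|=0$ level it would require absorbing an $\|n_i\|_{L^2}^2$ that is not in the dissipation; the paper's gradient form $\langle\Delta\phi,n_i\rangle=-\langle\nabla\phi,\nabla n_i\rangle$ avoids this and is what (H3)--(H4) are actually formulated for.
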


\begin{remark}
	The first condition $\sum_{i=1}^N z_i \delta_i = 0$ in the equilibria set $\mathcal{S}_{eq}$ means that the stabilities verified in Theorem \ref{Main-Thm} is around the constant equilibrium state with zeroed total electrical charge.
\end{remark}

\begin{remark}
	The assumption \eqref{Key-Asmp} means that the all viscosities $\nu_i$ between the $i$-th particles and the solvent are about the same size, which are conceivably reasonable from the view of physics. Take Sodium chloride solution as an example, the viscosity $\nu_1$ between sodium ion $Na^+$ and the solvent water is $1.334 nm^2/ns$, and the viscosity $\nu_2$ between chloride ion $Cl^-$ and the solvent water is $2.032 nm^2/ns$ (see \cite{Lide-2004}). Then the harmonic average $\nu$ of $\nu_1$ and $\nu_2$ is
	\begin{equation*}
	  \begin{aligned}
	    \nu = \tfrac{2}{\tfrac{1}{\nu_1} + \tfrac{1}{\nu_2}} = \tfrac{1.334 \times 2.032}{1.683} nm^2/ns \,.
	  \end{aligned}
	\end{equation*}
	There therefore hold
	\begin{equation*}
	  \begin{aligned}
	    (1 - \tfrac{\nu_1}{\nu})^2 = (1 - \tfrac{\nu_2}{\nu})^2 = 0.043 < \tfrac{1}{2} \,,
	  \end{aligned}
	\end{equation*}
	which means that the assumption \eqref{Key-Asmp} is satisfied.
\end{remark}

\subsection{Key ideas and sketch of the proofs}

The key observation of this paper is that although all positive constants states are solutions of the system \eqref{PNPF-1}, for the fixed coefficients with the assumptions \eqref{Asmp-z} and \eqref{Asmp-othCoefs}, not all the constant states around which the system of the fluctuations are {\em dissipative}. Here we emphasize that one can prove the constant states are stable provided that they are such that the perturbed systems around them are dissipative. So, we need to find some suitable equilibrium states $(\delta_1, \cdots, \delta_N, 1)$ associated with $(\rho_1, \cdots, \rho_N, T)$, so that the basic energy of the whole system near the equilibrium state is dissipative. Here $\delta_1, \cdots, \delta_N > 0$ are to be determined.

First, from the physical point of view, the total electrical charge $m = \sum_{i=1}^N z_i \rho_i$ is a very important physical quantity in the PNPF system, whose evolution is governed by the last equation of \eqref{PNPF-Perturbed}, namely,
\begin{equation*}
  \begin{aligned}
    \partial_t m - \tfrac{k_B}{\nu} \Delta m + \tfrac{1}{\eps} \big( \sum_{i=1}^N \tfrac{z_i^2 \delta_i}{\nu_i} \big) m = k_B \sum_{i=1}^N ( \tfrac{1}{\nu_i} - \tfrac{1}{\nu} ) z_i \Delta n_i + k_B \sum_{i=1}^N \tfrac{z_i \delta_i}{\nu_i} \Delta \theta + R_m \,,
  \end{aligned}
\end{equation*}
which has the dissipative effect $ - \tfrac{k_B}{\nu} \Delta m $ and the damping effect $\tfrac{1}{\eps} \big( \sum_{i=1}^N \tfrac{z_i^2 \delta_i}{\nu_i} \big) m$. These two structures play an essential role in deriving the global energy bounds. Moreover, together with the Poisson equation $- \Delta \phi = \tfrac{1}{\eps} m$, the above $m$-equation will give us a energy structure of $\phi$
\begin{equation*}
  \begin{aligned}
    \tfrac{1}{2} \tfrac{\d}{\d t} \big( \eps \| \nabla \phi \|^2_{L^2} \big) + \tfrac{k_B \eps}{\nu } \| \Delta \phi \|^2_{L^2} + ( \sum_{i=1}^N \tfrac{z_i^2 \delta_i}{\nu_i} ) \| \nabla \phi \|^2_{L^2}
  \end{aligned}
\end{equation*}
by dot with $\phi$ and integrating by parts over $x \in \R^3$.

Second, in order to see the intrinsic structure of the PNPF system \eqref{PNPF-1}, we linearize the equations \eqref{PNPF-1} near the constant equilibrium state $(\delta_1, \cdots , \delta_N, 1)$, which reduces to the linearized system \eqref{PNPF-Linearized}. We rewrite this linear system as an abstract form
\begin{equation}\label{Linear-Abstract}
  \begin{aligned}
    \partial_t U - D_\delta (U) = L_\delta (U) \,,
  \end{aligned}
\end{equation}
where
  \begin{align*}
    & U = \left(
          \begin{array}{c}
            n_1 \\
            \cdot \\[-3mm]
            \cdot \\[-3mm]
            \cdot \\
            n_N \\
            a \theta \\
            m \\
            \eps \nabla \phi
          \end{array}
        \right) \,, \
    D_\delta (U) = \left(
             \begin{array}{c}
               \tfrac{k_B \delta_1}{\nu_1} \Delta n_1 \\
               \cdot \\[-3mm]
               \cdot \\[-3mm]
               \cdot \\
               \tfrac{k_B \delta_N}{\nu_N} \Delta n_N \\
               b \Delta \theta \\
               \begin{aligned}
                 \tfrac{k_B}{\nu} \Delta m - \tfrac{1}{\eps} \big( \sum_{i=1}^N \tfrac{z_i^2 \delta_i}{\nu_i} \big) m
               \end{aligned} \\
               \begin{aligned}
                 \tfrac{k_B \eps}{\nu } \nabla \Delta \phi - ( \sum_{i=1}^N \tfrac{z_i^2 \delta_i}{\nu_i} ) \nabla \phi
               \end{aligned}
             \end{array}
           \right) \,, \\[2mm]
    & L_\delta (U) = \left(
             \begin{array}{c}
               \tfrac{k_B \delta_1}{\nu_1} \Delta \theta + \tfrac{ z_1 \delta_1}{\nu_1} \Delta \phi \\
               \cdot \\[-3mm]
               \cdot \\[-3mm]
               \cdot \\
               \tfrac{k_B \delta_N}{\nu_N} \Delta \theta + \tfrac{ z_N \delta_N}{\nu_N} \Delta \phi \\
               \begin{aligned}
                 \sum_{i=1}^N \tfrac{k_B^2}{\nu_i} \Delta n_i + \sum_{i=1}^N \tfrac{ k_B z_i \delta_i}{\nu_i} \Delta \phi
               \end{aligned} \\
               \begin{aligned}
                 k_B \sum_{i=1}^N ( \tfrac{1}{\nu_i} - \tfrac{1}{\nu} ) z_i \Delta n_i + k_B \sum_{i=1}^N \tfrac{z_i \delta_i}{\nu_i} \Delta \theta
               \end{aligned} \\
               \begin{aligned}
                 - k_B \sum_{i=1}^N ( \tfrac{1}{\nu_i} - \tfrac{1}{\nu} ) z_i \nabla n_i - k_B \sum_{i=1}^N \tfrac{z_i \delta_i}{\nu_i} \nabla \theta
               \end{aligned}
             \end{array}
           \right) \,.
  \end{align*}
Here the function $\phi$ is determined by $- \Delta \phi = \tfrac{1}{\eps} m$. Although the linear system \eqref{Linear-Abstract} has dissipation mechanism $D_\delta (U)$, whose coefficients depend on the equilibrium state $( \delta_1, \cdots, \delta_N )$, the linear term $L_\delta (U)$ may have a {\em negative impact} on the dissipation mechanism of the entire system near the general equilibrium state. We thereby introduce an equilibria set $\mathcal{S}_{eq}$, which contains all possible equilibrium states such that the linear system \eqref{Linear-Abstract} is dissipative and thus the nonlinear system \eqref{PNPF-1} is also dissipative. Moreover, we can prove the set $\mathcal{S}_{eq}$ is nonempty under the assumption \eqref{Key-Asmp}, i.e., $\max_{1 \leq i \leq N} (1 - \tfrac{\nu_i}{\nu})^2 < \tfrac{1}{2}$, in Proposition \ref{Prop-AEF}. This is the main novelty of current paper.

At the end, based on the linearized dissipative law in Section \ref{Sec:Linear-Dissp}, we employ the energy method to derive the a priori energy estimates given in Proposition \ref{Prop-Apriori}. We emphasize that due to the absence of $L^2$-norm of $\phi$ and $\u_0$, which subject to the nonlinear elliptic equations, in the energy and dissipative rate, there are several key cancellations on $\phi$ and $\u_0$ when controlling the nonlinear terms in $\phi$ and $\u_0$ equations. More precisely, the are \eqref{Key-Cnc-u0} and \eqref{Key-Rlts} below, hence,
  \begin{align*}
    & - \tfrac{1}{\lambda_0} \l m \nabla \phi , \u_0 \r = \tfrac{\eps}{\lambda_0} \l \nabla \phi \otimes \nabla \phi , \nabla \u_0 \r \,, \\
    & \l \u_0 \cdot \nabla \Delta \phi , \phi \r = - \l \nabla \phi \otimes \nabla \phi , \nabla \u_0 \r \,, \\
    & \l n_i \Delta \phi + \nabla n_i \cdot \nabla \phi , \phi \r = - \l n_i , |\nabla \phi|^2 \r \,.
  \end{align*}
Then, by the continuity arguments, we construct the unique global smooth solution near the {dissipative} equilibrium states.

\subsection{Organization of this paper}

In the next section, we study the dissipative structures of the linearized equations \eqref{PNPF-Linearized} of \eqref{PNPF-1}. In order to ensure the dissipation of the whole system, we define the equilibria set $\mathcal{S}_{eq}$, which is nonempty under a further coefficients assumption proved in Proposition \ref{Prop-AEF}. In Section \ref{Sec:Global}, we derive the global a priori estimates and prove the global well-posedness near the equilibrium states in $\mathcal{S}_{eq}$ by employing the continuity arguments. Finally, in Appendix \ref{Sec:Appendix}, we give the details on deriving the reformulation \eqref{PNPF-2} of the original PNPF system \eqref{PNPF-1} and the perturbed equations \eqref{PNPF-Perturbed}.


\section{Linearized dissipative laws and the equilibria}\label{Sec:Linear-Dissp}

In this section, we aim at studying the dissipative structures of the linearized equations of the system \eqref{PNPF-Perturbed} near some proper constant equilibria $(\delta_1, \delta_2, \cdots, \delta_N)$ associated with the local density distributions $\rho_i$ $(i = 1, 2, \cdots, N)$. More precisely, the linearized system of \eqref{PNPF-Perturbed} reads
\begin{equation}\label{PNPF-Linearized}
  \left\{
    \begin{aligned}
      \partial_t n_i - \tfrac{k_B}{\nu_i} \Delta n_i = & \tfrac{k_B \delta_i}{\nu_i} \Delta \theta + \tfrac{ z_i \delta_i}{\nu_i} \Delta \phi \,, \ i= 1, 2, \cdots, N \,, \\
      \qquad \qquad \qquad - \Delta \phi = & \tfrac{1}{\eps} m \,, \\
      a \partial_t \theta - b \Delta \theta = & \sum_{i=1}^N \tfrac{k_B^2}{\nu_i} \Delta n_i + \sum_{i=1}^N \tfrac{ k_B z_i \delta_i}{\nu_i} \Delta \phi \,, \\
      \partial_t m - \tfrac{k_B}{\nu} \Delta m + \tfrac{1}{\eps} \big( \sum_{i=1}^N \tfrac{z_i^2 \delta_i}{\nu_i} \big) m = & k_B \sum_{i=1}^N ( \tfrac{1}{\nu_i} - \tfrac{1}{\nu} ) z_i \Delta n_i + k_B \sum_{i=1}^N \tfrac{z_i \delta_i}{\nu_i} \Delta \theta \,,
    \end{aligned}
  \right.
\end{equation}
in which the positive constants $\delta_1$, $\delta_2$, $\cdots$, $\delta_N$ with the constraint $\sum_{j=1}^N \delta_j z_j = 0$ is to be determined, and the constants $a$, $b$ are defined in \eqref{Coeffs-ab}. One easily observes that if $\nu_1 = \nu_2 = \cdots = \nu_N > 0$, the term $k_B \sum_{i=1}^N ( \tfrac{1}{\nu_i} - \tfrac{1}{\nu} ) z_i \Delta n_i$ will vanish. We note that the incompressible solvent velocity $\u_0$ does not affect the linear part of the evolutions. In the following, we will find some admissible equilibria $\delta_1$, $\delta_2$, $\cdots$, $\delta_N$ such that the linearized system \eqref{PNPF-Linearized} is dissipative.

We first derive the energy bounds on the Poisson equation $\phi$ by employing the linearized evolution of the total electrical charge $m$ in \eqref{PNPF-Linearized}. Notice that
\begin{equation*}
  \begin{aligned}
    \partial_t \big( - \eps \Delta \phi \big) + \tfrac{k_B \eps}{\nu} \Delta^2 \phi - \big( \sum_{i=1}^N \tfrac{z_i^2 \delta_i}{\nu_i} \big) \Delta \phi = k_B \sum_{i=1}^N ( \tfrac{1}{\nu_i} - \tfrac{1}{\nu} ) z_i \Delta n_i + k_B \sum_{i=1}^N \tfrac{z_i \delta_i}{\nu_i} \Delta \theta \,,
  \end{aligned}
\end{equation*}
which derives from multiplying by $\phi$ and integrating by parts over $x \in \R^3$ that
  \begin{align*}
    & \tfrac{1}{2} \tfrac{\d}{\d t} \big( \eps \| \nabla \phi \|^2_{L^2} \big) + \tfrac{k_B \eps}{\nu } \| \Delta \phi \|^2_{L^2} + \big( \sum_{i=1}^N \tfrac{z_i^2 \delta_i}{\nu_i} \big) \| \nabla \phi \|^2_{L^2} \\
    = & - k_B \sum_{i=1}^N ( \tfrac{1}{\nu_i} - \tfrac{1}{\nu} ) z_i \langle \nabla n_i , \nabla \phi \rangle - k_B \sum_{i=1}^N \tfrac{z_i \delta_i}{\nu_i} \langle \nabla \theta , \nabla \phi \rangle \\
    \leq & \sum_{i=1}^N \tfrac{k_B^2 (\frac{1}{\nu_i} - \frac{1}{\nu})^2 z_i^2}{4 \eta_{\phi i}} \| \nabla n_i \|^2_{L^2} + \sum_{i=1}^N \eta_{\phi i} \| \nabla \phi \|^2_{L^2} + \tfrac{k_B^2}{4 \eta_\phi} \big( \sum_{i=1}^N \tfrac{z_i \delta_i}{\nu_i} \big)^2 \| \nabla \theta \|^2_{L^2} + \eta_\phi \| \nabla \phi \|^2_{L^2}
  \end{align*}
for some positive constants $\eta_{\phi i}, \eta_\phi > 0$ ($i = 1, 2, \cdots, N$) to be determined. We therefore have
\begin{equation}\label{Lnrzd-phi}
  \begin{aligned}
    \tfrac{1}{2} \tfrac{\d}{\d t} \big( \eps \| \nabla \phi \|^2_{L^2} \big) + \tfrac{k_B \eps}{\nu } \| \Delta \phi \|^2_{L^2} + \big( \sum_{i=1}^N \tfrac{z_i^2 \delta_i}{\nu_i} - \sum_{i=1}^N \eta_{\phi i} - \eta_\phi \big) \| \nabla \phi \|^2_{L^2} \\
    \leq \sum_{i=1}^N \tfrac{k_B^2 (\frac{1}{\nu_i} - \frac{1}{\nu})^2 z_i^2}{4 \eta_{\phi i}} \| \nabla n_i \|^2_{L^2} + \tfrac{k_B^2}{4 \eta_\phi} \big( \sum_{i=1}^N \tfrac{z_i \delta_i}{\nu_i} \big)^2 \| \nabla \theta \|^2_{L^2} \,.
  \end{aligned}
\end{equation}
We next take $L^2$-inner product in the $n_i$-equation of \eqref{PNPF-Linearized} by dot with $n_i$. We then have
\begin{equation*}
  \begin{aligned}
    \tfrac{1}{2} \tfrac{\d}{\d t} \| n_i \|^2_{L^2} + & \tfrac{k_B}{\nu_i} \| \nabla n_i \|^2_{L^2} = - \tfrac{k_B \delta_i}{\nu_i} \l \nabla \theta, \nabla n_i \r - \tfrac{z_i \delta_i}{\nu_i} \l \nabla \phi, \nabla n_i \r \\
    \leq & \eta_i \tfrac{k_B}{\nu_i} \| \nabla n_i \|^2_{L^2} + \tfrac{k_B \delta_i^2}{4 \eta_i \nu_i} \| \nabla \theta \|^2_{L^2} + \eta_i' \tfrac{k_B}{\nu_i} \| \nabla n_i \|^2_{L^2} + \tfrac{z_i^2 \delta_i^2}{4 \eta_i' k_B \nu_i} \| \nabla \phi \|^2_{L^2}
  \end{aligned}
\end{equation*}
for some positive constants $\eta_i, \eta_i' > 0$ ($i = 1, 2, \cdots, N$) to be determined, where the last inequality is derived from the H\"older inequality and the Young's inequality. We thereby obtain
\begin{equation}\label{L2-ni-L}
  \begin{aligned}
    \tfrac{1}{2} \tfrac{\d}{\d t} \| n_i \|^2_{L^2} + ( 1 - \eta_i - \eta_i' ) \tfrac{k_B}{\nu_i} \| \nabla n_i \|^2_{L^2} \leq \tfrac{k_B \delta_i^2}{4 \eta_i \nu_i} \| \nabla \theta \|^2_{L^2} + \tfrac{z_i^2 \delta_i^2}{4 \eta_i' k_B \nu_i} \| \nabla \phi \|^2_{L^2}
  \end{aligned}
\end{equation}
for $i = 1, 2, \cdots, N$. From the same arguments of the inequality \eqref{L2-ni-L}, we can deduce that
\begin{equation}\label{L2-theta-L}
  \begin{aligned}
    \tfrac{1}{2} \tfrac{\d}{\d t} ( a \| \theta \|^2_{L^2} ) + (1 - \eta_\theta - \eta_\theta' ) b \| \nabla \theta \|^2_{L^2} \leq \sum_{i=1}^N \tfrac{N k_B^4}{4 \eta_\theta b \nu_i} \| \nabla n_i \|^2_{L^2} + \tfrac{k_B^2}{4 \eta_\theta' b} \Big( \sum_{i=1}^N \tfrac{z_i \delta_i}{\nu_i} \Big)^2 \| \nabla \phi \|^2_{L^2}
  \end{aligned}
\end{equation}
and
\begin{equation}\label{L2-m-L}
  \begin{aligned}
    \tfrac{1}{2} \tfrac{\d}{\d t} \| m \|^2_{L^2} + (1 - \eta_m - \eta_m') \tfrac{k_B}{\nu} \| \nabla m \|^2_{L^2} + \tfrac{1}{\eps} \sum_{i=1}^N \tfrac{z_i^2 \delta_i}{\nu_i} \| m \|^2_{L^2} \\
    \leq \sum_{i=1}^N \tfrac{k_B \nu z_i^2}{4 \eta_m} ( \tfrac{1}{\nu_i} - \tfrac{1}{\nu} )^2 \| \nabla n_i \|^2_{L^2} + \tfrac{k_B \nu}{4 \eta_m'} \Big( \sum_{i=1}^N \tfrac{z_i \delta_i}{\nu_i} \Big)^2 \| \nabla \theta \|^2_{L^2} \,,
  \end{aligned}
\end{equation}
where the positive constants $\eta_\theta$, $\eta_\theta'$, $\eta_m$ and $\eta_m'$ are to be determined. We now add the $\chi_\phi$ times of \eqref{Lnrzd-phi}, $\chi_i$ times of \eqref{L2-ni-L} and $\chi_m$ times of \eqref{L2-m-L} into $\chi_\theta $ times of the inequality \eqref{L2-theta-L}, where the constants $\chi_\phi, \chi_i, \chi_m, \chi_\theta > 0$ $(i = 1, 2, \cdots, N)$ are also to be determined. We therefore obtain
  \begin{align}\label{L2-Summary-L}
    \no & \tfrac{1}{2} \tfrac{\d}{\d t} \Big( \chi_\phi \eps \| \nabla \phi \|^2_{L^2} + \sum_{i=1}^N \chi_i \| n_i \|^2_{L^2} + a \chi_\theta \| \theta \|^2_{L^2} + \chi_m\| m \|^2_{L^2} \Big) \\
    \no & + \Big[ \chi_\theta (1 - \eta_\theta - \eta_\theta') b - \chi_m \tfrac{k_B \nu}{4 \eta_m'} \big( \sum_{i=1}^N \tfrac{z_i \delta_i}{\nu_i} \big)^2 - \sum_{i=1}^N \chi_i \tfrac{k_B \delta_i^2}{4 \eta_i \nu_i} - \chi_\phi \tfrac{k_B^2}{4 \eta_\phi} \big( \sum_{i=1}^N \tfrac{z_i \delta_i}{\nu_i} \big)^2 \Big] \| \nabla \theta \|^2_{L^2} \\
    \no & + \chi_m (1 - \eta_m - \eta_m' ) \tfrac{k_B}{\nu} \| \nabla m \|^2_{L^2} + \chi_m \tfrac{1}{\eps} \sum_{i=1}^N \tfrac{z_i^2 \delta_i}{\nu_i} \| m \|^2_{L^2} + \chi_\phi \tfrac{k_B \eps}{\nu } \| \Delta \phi \|^2_{L^2} \\
    & + \sum_{i=1}^N \big[ \chi_i ( 1 - \eta_i - \eta_i' ) \tfrac{k_B}{\nu_i} - \chi_\theta \tfrac{N k_B^4}{4 \eta_\theta b \nu_i} - \chi_m \tfrac{k_B \nu z_i^2}{4 \eta_m} ( \tfrac{1}{\nu_i} - \tfrac{1}{\nu} )^2 - \chi_\phi \tfrac{k_B^2 z_i^2}{4 \eta_{\phi i}} ( \tfrac{1}{\nu_i} - \tfrac{1}{\nu} )^2 \big] \| \nabla n_i \|^2_{L^2} \\
    \no & + \Big[ \chi_\phi \big( \sum_{i=1}^N \tfrac{z_i^2 \delta_i}{\nu_i} - \sum_{i=1}^N \eta_{\phi i} - \eta_\phi \big) - \chi_\theta \tfrac{k_B^2}{4 \eta_\theta' b} \big( \sum_{i=1}^N \tfrac{z_i \delta_i}{\nu_i} \big)^2 - \sum_{i=1}^N \chi_i \tfrac{z_i^2 \delta_i^2}{4 \eta_i' k_B \nu_i} \Big] \| \nabla \phi \|^2_{L^2} \leq 0 \,.
  \end{align}
Then the basic energy law \eqref{L2-Summary-L} is dissipative if and only if there are some positive constants
\begin{equation*}
  \chi_i \,, \ \eta_i \,, \ \eta_i' \,, \ \eta_{\phi i} \, (1 \leq i \leq N) \,, \ \chi_m \,, \ \eta_m \,, \ \eta_m' \,, \ \eta_\theta \,, \ \eta_\theta' \,, \ \eta_\phi \,, \ \chi_\phi \,, \ \chi_\theta \,,
\end{equation*}
which may depend on the choices of $\delta_i$ $(i= 1, 2, \cdots, N)$, such that
\begin{align*}
  \textrm{(H1): } & \ \chi_\theta (1 - \eta_\theta - \eta_\theta') b - \chi_m \tfrac{k_B \nu}{4 \eta_m'} \big( \sum_{i=1}^N \tfrac{z_i \delta_i}{\nu_i} \big)^2 - \sum_{i=1}^N \chi_i \tfrac{k_B \delta_i^2}{4 \eta_i \nu_i} - \chi_\phi \tfrac{k_B^2}{4 \eta_\phi} \big( \sum_{i=1}^N \tfrac{z_i \delta_i}{\nu_i} \big)^2 > 0 \,, \\
  \textrm{(H2): } & \ 1 - \eta_m - \eta_m' > 0 \,, \\
  \textrm{(H3): } & \ \chi_\phi \big( \sum_{i=1}^N \tfrac{ z_i^2 \delta_i}{\nu_i} - \sum_{i=1}^N \eta_{\phi i} - \eta_\phi \big) - \chi_\theta \tfrac{ k_B^2}{4 \eta_\theta' b} \big( \sum_{i=1}^N \tfrac{z_i \delta_i}{\nu_i} \big)^2 - \sum_{i=1}^N \chi_i \tfrac{z_i^2 \delta_i^2}{4 \eta_i' k_B \nu_i} > 0 \,, \\
  \textrm{(H4): } & \ \chi_i ( 1 - \eta_i - \eta_i' ) \tfrac{k_B}{\nu_i} - \chi_\theta \tfrac{N k_B^4}{4 \eta_\theta b \nu_i} - \chi_m \tfrac{k_B \nu z_i^2}{4 \eta_m} ( \tfrac{1}{\nu_i} - \tfrac{1}{\nu} )^2 - \chi_\phi \tfrac{k_B^2 z_i^2}{4 \eta_{\phi i}} ( \tfrac{1}{\nu_i} - \tfrac{1}{\nu} )^2 > 0 \,, \ 1 \leq i \leq N \,,
\end{align*}
where $b > 0$ is defined in \eqref{Coeffs-ab}.

We now introduce a set $\mathcal{S}_{eq}$ of the elements $(\delta_1, \delta_2, \cdots, \delta_N)$, which contains all possible equilibria $\delta_1, \delta_2, \cdots, \delta_N$ associated with the local density distributions $\rho_1$, $\rho_2$, $\cdots$, $\rho_N$, respectively, such that the linearized system \eqref{PNPF-Linearized} is dissipative.

\begin{definition}[Equilibria set $\mathcal{S}_{eq}$]\label{Def-AEF}
	We define a equilibria set $\mathcal{S}_{eq} \subseteq \R^N$, whose elements $(\delta_1, \delta_2, \cdots, \delta_N)$ satisfy the following two conditions:
	\begin{enumerate}
		\item[\bf (A1)] $\delta_i > 0$ for $i = 1, 2, \cdots, N$ and $\sum_{i=1}^N z_i \delta_i = 0$;
		\item[\bf (A2)] There are some positive constants
		\begin{equation*}
		  \chi_i \,, \ \eta_i \,, \ \eta_i' \,, \ \eta_{\phi i} \, (1 \leq i \leq N) \,, \ \chi_m \,, \ \eta_m \,, \ \eta_m' \,, \ \eta_\theta \,, \ \eta_\theta' \,, \ \eta_\phi \,, \ \chi_\phi \,, \ \chi_\theta \,,
		\end{equation*}
		which may depend on the choices of $\delta_i$ $(i= 1, 2, \cdots, N)$, such that the hypotheses $\mathrm{(H1)}$-$\mathrm{(H4)}$ hold.
	\end{enumerate}
\end{definition}

We remark that the equilibria set $\mathcal{S}_{eq}$ depends only on the all coefficients and the species number $N \geq 2$.

Next, for the equilibria set $\mathcal{S}_{eq}$ defined in Definition \ref{Def-AEF}, we introduce the following proposition to prove the set $\mathcal{S}_{eq}$ is nonempty. Once the following proposition holds, the basic energy law \eqref{L2-Summary-L} is dissipative with any fixed equilibrium belonging to $\mathcal{S}_{eq}$ associated with the local density distributions $\rho_1, \rho_2, \cdots, \rho_N$.

\begin{proposition}\label{Prop-AEF}
	Under the assumptions \eqref{Asmp-z} and \eqref{Asmp-othCoefs}, we further assume \eqref{Key-Asmp}, i.e.,
	\begin{equation*}
	  \begin{aligned}
	    \max_{1 \leq i \leq N} ( 1 - \tfrac{\nu_i}{\nu} )^2 < \tfrac{1}{2} \,.
	  \end{aligned}
	\end{equation*}
	Then we have
	\begin{equation}
	  \begin{aligned}
	    \mathcal{S}_{eq} \neq \emptyset \,.
	  \end{aligned}
	\end{equation}
\end{proposition}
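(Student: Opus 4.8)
The goal is to exhibit one equilibrium $(\delta_1,\dots,\delta_N)$ satisfying $\mathrm{(A1)}$ together with positive multipliers for which $\mathrm{(H1)}$--$\mathrm{(H4)}$ all hold. Since \eqref{Asmp-z} makes $z$ change sign, the hyperplane $\{\sum_i z_i\delta_i=0\}$ meets the positive octant; I fix a profile $(\hat\delta_1,\dots,\hat\delta_N)$ there and look for $\delta_i=\lambda\hat\delta_i$ with $\lambda>0$ small, together with multipliers carrying matched powers of $\lambda$,
\[
\chi_\phi=\lambda^2\hat\chi_\phi,\ \ \chi_i=\lambda\hat\chi_i,\ \ \chi_\theta=\lambda\hat\chi_\theta,\ \ \chi_m=\lambda\hat\chi_m,\ \ \eta_{\phi i}=\lambda\hat\eta_{\phi i},\ \ \eta_\phi=\lambda\hat\eta_\phi,
\]
while $\eta_i,\eta_i',\eta_\theta,\eta_\theta',\eta_m,\eta_m'$ stay of order one (with $\eta_i+\eta_i'<1$, etc.). The virtue of small $\lambda$ is that in \eqref{PNPF-Linearized} every coefficient coupling one unknown into the dissipation of another is $O(\lambda)$ — namely $\tfrac{k_B\delta_i}{\nu_i},\tfrac{z_i\delta_i}{\nu_i}$, the $\phi$-damping $\tfrac1\eps\sum_i\tfrac{z_i^2\delta_i}{\nu_i}$, and $k_B\sum_i\tfrac{z_i\delta_i}{\nu_i}\Delta\theta$ — whereas the diagonal dissipations $\tfrac{k_B}{\nu_i}\Delta n_i$, $b\Delta\theta$, $\tfrac{k_B}{\nu}\Delta m$, $\tfrac{k_B\eps}{\nu}\Delta^2\phi$ stay $O(1)$ (with $a\to k_Bc_0\rho_0$, $b\to k$); the only order-one couplings that survive are the forcings of the $\theta$- and $m$-equations by $\Delta n_i$, with coefficients $\tfrac{k_B^2}{\nu_i}$ and $k_B(\tfrac1{\nu_i}-\tfrac1\nu)z_i$.

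Reading off the powers of $\lambda$ in \eqref{Lnrzd-phi}, \eqref{L2-ni-L}, \eqref{L2-theta-L}, \eqref{L2-m-L}, one finds: $\mathrm{(H2)}$ is free (take $\eta_m=\eta_m'=\tfrac14$); in $\mathrm{(H1)}$ the good term $\chi_\theta(1-\eta_\theta-\eta_\theta')b$ is $O(\lambda)$ while every subtracted term is $o(\lambda)$ (each carries $\delta_i^2$ or $S^2$, $S:=\sum_i\tfrac{z_i\delta_i}{\nu_i}$), so $\mathrm{(H1)}$ holds for $\lambda$ small once $\eta_\theta+\eta_\theta'<1$; and, after dividing by $\lambda$ (for $\mathrm{(H4)}$) and by $\lambda^3$ (for $\mathrm{(H3)}$), the remaining two become inequalities among the hatted quantities with $a,b$ at their limits. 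Since the reduced $\mathrm{(H4)}$ puts no upper bound on $\hat\chi_i$, I set each $\hat\chi_i$ to its $\mathrm{(H4)}$-threshold, i.e. a fixed positive linear combination of $\hat\chi_\theta,\hat\chi_m,\hat\chi_\phi$ (the $\hat\chi_\phi$-coefficient involving $1/\hat\eta_{\phi i}$). Substituting into the reduced $\mathrm{(H3)}$ collapses it to a single inequality $\hat\chi_\phi\,\kappa>(\text{a fixed positive quantity depending on }\hat\chi_\theta,\hat\chi_m)$, with
\[
\kappa=\hat Z-\sum_i\hat\eta_{\phi i}-\hat\eta_\phi-\sum_i\frac{\big(\tfrac1{\nu_i}-\tfrac1\nu\big)^2 z_i^4\hat\delta_i^2}{16(1-\eta_i-\eta_i')\eta_i'\,\hat\eta_{\phi i}},\qquad \hat Z:=\sum_i\frac{z_i^2\hat\delta_i}{\nu_i}>0,
\]
which is solvable by taking $\hat\chi_\phi$ large provided $\kappa>0$.

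Producing $\kappa>0$ is the crux, and the only place \eqref{Key-Asmp} is used. Let $\hat\eta_\phi\to0$, $\eta_i\to0$, $\eta_i'\to\tfrac12$ (so $16(1-\eta_i-\eta_i')\eta_i'\to4$) and minimise over $\hat\eta_{\phi i}>0$: by AM--GM,
\[
\inf_{\hat\eta_{\phi i}>0}\Big(\hat\eta_{\phi i}+\frac{\big(\tfrac1{\nu_i}-\tfrac1\nu\big)^2 z_i^4\hat\delta_i^2}{4\,\hat\eta_{\phi i}}\Big)=\Big|\tfrac1{\nu_i}-\tfrac1\nu\Big|\,z_i^2\hat\delta_i=\frac{z_i^2\hat\delta_i}{\nu_i}\,\Big|1-\tfrac{\nu_i}{\nu}\Big|,
\]
so $\kappa$ can be brought arbitrarily close to $\sum_i\tfrac{z_i^2\hat\delta_i}{\nu_i}\big(1-\big|1-\tfrac{\nu_i}{\nu}\big|\big)$, which is strictly positive because each $\tfrac{z_i^2\hat\delta_i}{\nu_i}>0$ and, by \eqref{Key-Asmp}, $\max_i\big|1-\tfrac{\nu_i}{\nu}\big|<\tfrac1{\sqrt2}<1$; the minimising weights $\hat\eta_{\phi i}=\tfrac12\big|1-\tfrac{\nu_i}{\nu}\big|z_i^2\hat\delta_i/\nu_i$ also satisfy $\sum_i\hat\eta_{\phi i}+\hat\eta_\phi<\hat Z$, so the reduced $\mathrm{(H3)}$ is genuinely satisfiable. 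As all the reduced inequalities are strict, they persist for finite small $\lambda$, completing the construction. The main obstacle is precisely this bookkeeping: the Young weights and the $\lambda$-exponents of the multipliers must be chosen so consistently that the residual $n_i$--$\phi$ coupling (which persists through $-\Delta\phi=m/\eps$ and the charge equation, with its $O(1)$ coefficient against an $O(\lambda)$ damping) contributes only the benign combination above, and the AM--GM split in $\hat\eta_{\phi i}$ must be taken sharply; everything else — existence of a positive $\hat\delta$, triviality of $\mathrm{(H1)}$--$\mathrm{(H2)}$, the freedom in $\hat\chi_i$ — is soft.
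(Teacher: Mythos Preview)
Your argument is correct and takes a genuinely different route from the paper's. Both proofs ultimately work by making the equilibrium small, but the mechanisms differ. The paper proceeds by a sequence of explicit ad-hoc choices: it fixes $\eta_{\phi i}=\tfrac{z_i^2\delta_i}{4\nu_i}$, $\eta_i'=\tfrac12\delta_i^{-\kappa}$ (with an auxiliary exponent $\kappa>0$), and then chases a chain of claimed inequalities (\emph{Claims 1--2}) that pin down an admissible interval for each $\delta_i$; the hypothesis $(1-\nu_i/\nu)^2<\tfrac12$ enters sharply through the requirement $\tfrac{1}{2\lambda\eps_0}<\tfrac{(1-\lambda)\eps_0}{(1-\nu_i/\nu)^2}$, whose optimum over $\lambda,\eps_0$ forces exactly that bound. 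Your approach is a systematic scaling analysis: you fix a direction $\hat\delta$, rescale $\delta=\lambda\hat\delta$ and the multipliers by matched powers of $\lambda$, so that $\mathrm{(H1)}$ becomes $O(\lambda)$-positive against $O(\lambda^3)$-negative and trivialises, while $\mathrm{(H3)}$--$\mathrm{(H4)}$ reduce to a $\lambda$-independent system; then setting $\hat\chi_i$ at the $\mathrm{(H4)}$-threshold and optimising $\hat\eta_{\phi i}$ by AM--GM collapses everything to the single sign condition $\sum_i\tfrac{z_i^2\hat\delta_i}{\nu_i}\big(1-|1-\tfrac{\nu_i}{\nu}|\big)>0$. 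This is cleaner and more transparent than the paper's construction, and --- notably --- it actually uses only the weaker hypothesis $\max_i|1-\tfrac{\nu_i}{\nu}|<1$ rather than the full $<\tfrac{1}{\sqrt2}$ of \eqref{Key-Asmp}; the price is that your equilibria are produced only as limits (``take $\lambda$ small''), whereas the paper writes down explicit intervals for the $\delta_i$.
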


\begin{proof}
	Let $\eta_m = \eta_m' = \tfrac{1}{4}$. Then the hypothesis $\textrm{(H2)}$ automatically holds. We will prove the hypotheses $\textrm{(H1)}$, $\textrm{(H3)}$ and $\textrm{(H4)}$ by three steps.
	
	{\bf Claim 1.} {\em There are $\eps_0 \in (0, 1)$ $($closed to $1)$ and $\lambda \in (\tfrac{1}{2 \eps_0}, 1)$ such that for any fixed $\kappa > 0$, if $(\delta_1, \delta_2, \cdots, \delta_N) \in \R^N$ satisfy
	\begin{equation}\label{Clm1-0}
	  \begin{aligned}
	    (\tfrac{1}{2 \lambda \eps_0})^\frac{1}{\kappa} < \delta_i < \Big[ \min \{ 1, \tfrac{(1 - \lambda ) \eps_0 }{(1 - \frac{\nu_i}{\nu})^2} \} \Big]^\frac{1}{\kappa} \ (i = 1, 2, \cdots, N)
	  \end{aligned}
	\end{equation}
	with $\sum_{i=1}^N z_i \delta_i = 0$, then there exist numbers $\eta_{\phi i}, \eta_\phi, \eta_i, \eta_i', \chi_i , \chi_\phi > 0$ $(1 \leq i \leq N)$ satisfying
	\begin{equation}\label{Clm1}
	  \left\{
	    \begin{aligned}
	      & \chi_\phi \big( \sum_{i=1}^N \tfrac{ z_i^2 \delta_i}{\nu_i} - \sum_{i=1}^N \eta_{\phi i} - \eta_\phi \big) - \sum_{i=1}^N \chi_i \tfrac{z_i^2 \delta_i^2}{4 \eta_i' k_B \nu_i} > 0 \,, \\
	      & \chi_i ( 1 - \eta_i - \eta_i' ) \tfrac{k_B}{\nu_i} - \chi_\phi \tfrac{k_B^2 z_i^2}{4 \eta_{\phi i}} ( \tfrac{1}{\nu_i} - \tfrac{1}{\nu} )^2 > 0 \,, \ 1 \leq i \leq N \,.
	    \end{aligned}
	  \right.
	\end{equation}}
	
	Indeed, we first set $\eta_i = 1 - \eps_0 > 0$, $\eta_{\phi i} = \tfrac{ z_i^2 \delta_i}{4 \nu_i}$ for $1 \leq i \leq N$, $\eta_\phi = \sum_{i=1}^N \tfrac{ z_i^2 \delta_i}{4 \nu_i} $ and $\eta_i' = \tfrac{1}{2} \delta_i^{- \kappa}$ ($1 \leq i \leq N$) for any fixed $\kappa > 0$. Here $0 < \eps_0 < 1$ is to be determined. It therefore sees that
	\begin{equation}\label{Clm1-1}
	  \begin{aligned}
	    \sum_{i=1}^N \tfrac{ z_i^2 \delta_i}{\nu_i} - \sum_{i=1}^N \eta_{\phi i} - \eta_\phi = \sum_{i=1}^N \tfrac{ z_i^2 \delta_i}{2 \nu_i} > 0
	  \end{aligned}
	\end{equation}
	and
	\begin{equation*}
	  \begin{aligned}
	    1 - \eta_i - \eta_i' = \eps_0 - \tfrac{1}{2} \delta_i^{- \kappa} \,.
	  \end{aligned}
	\end{equation*}
	We first require $\lambda \eps_0 - \tfrac{1}{2} \delta_i^{- \kappa} > 0$ for some $0 < \lambda < 1$. We then see that $\delta_i^\kappa > \tfrac{1}{2 \lambda \eps_0} > 0$. We further restrict $\tfrac{1}{2 \lambda \eps_0} < 1$, so that
	\begin{equation*}
	  \begin{aligned}
	    \mathcal{M}_1^\kappa : = \Big\{ (\delta_1, \delta_2, \cdots, \delta_N) \in \R^N ; (\tfrac{1}{2 \lambda \eps_0})^\frac{1}{\kappa} < \delta_i < 1 \,, \ i = 1, 2, \cdots, N \,, \ \sum_{i=1}^N z_i \delta_i = 0 \Big\} \neq \emptyset \,.
	  \end{aligned}
	\end{equation*}
	Consequently, all elements in $\mathcal{M}_1^\kappa$ are such that
	\begin{equation}\label{Clm1-2}
	  \begin{aligned}
	    1 - \eta_i - \eta_i' = (1 - \lambda ) \eps_0 + \lambda \eps_0 - \tfrac{1}{2} \delta_i^{- \kappa} > (1 - \lambda ) \eps_0 > 0 \,, \ i = 1, 2, \cdots, N  \,.
	  \end{aligned}
	\end{equation}
	Together with \eqref{Clm1-1}, the first inequality of \eqref{Clm1} can be rewritten as
	\begin{equation*}
	  \begin{aligned}
	    \chi_\phi \sum_{i=1}^N \tfrac{ z_i^2 \delta_i}{2 \nu_i} > \sum_{i=1}^N \chi_i \tfrac{z_i^2 \delta_i^{2 + \kappa}}{2 k_B \nu_i} \,,
	  \end{aligned}
	\end{equation*}
	which will hold provided that $\chi_\phi \tfrac{ z_i^2 \delta_i}{2 \nu_i} > \chi_i \tfrac{z_i^2 \delta_i^{2 + \kappa}}{2 k_B \nu_i}$, namely,
	\begin{equation}\label{Clm1-3}
	  \begin{aligned}
	    \chi_\phi > \chi_i \tfrac{\delta_i^{1 + \kappa}}{k_B} \,, \ 1 \leq i \leq N \,.
	  \end{aligned}
	\end{equation}
	Combining with the choice of $\eta_{\phi i}$ and \eqref{Clm1-2}, we transform the second inequality \eqref{Clm1} into
	\begin{equation}\label{Clm1-4}
	  \begin{aligned}
	    \chi_i > \chi_\phi \tfrac{k_B}{ \delta_i (\eps_0 - \frac{1}{2} \delta_i^{- \kappa})} (1 - \tfrac{\nu_i}{\nu})^2 \,, \ 1 \leq i \leq N \,.
	  \end{aligned}
	\end{equation}
	Our goal is to prove that the inequalities \eqref{Clm1-3} and \eqref{Clm1-4} will hold for some $\chi_i, \chi_\phi > 0$, hence,
	\begin{equation}\label{Clm1-5}
	  \begin{aligned}
	    1 > \tfrac{\delta_i^{1+\kappa}}{k_B} \tfrac{\chi_i}{\chi_\phi} > \tfrac{\delta_i^\kappa ( 1 - \tfrac{\nu_i}{\nu} )^2 }{( \eps_0 - \tfrac{1}{2} \delta_i^{- \kappa} )} \,, \ 1 \leq i \leq N \,.
	  \end{aligned}
	\end{equation}
	Noticing the bound \eqref{Clm1-2}, to make the \eqref{Clm1-5} established for some $\chi_i, \chi_\phi > 0$, we only need to choose $(\delta_1, \delta_2, \cdots, \delta_N)$ in the following set:
	\begin{equation*}
	  \begin{aligned}
	    \mathcal{M}_2^\kappa : = \Big\{ (\delta_1, \delta_2, \cdots, \delta_N) \in \R^N ; 0 < \delta_i^\kappa < \tfrac{1}{ (1 - \tfrac{\nu_i}{\nu})^2 } (1- \lambda) \eps_0 \,, \ 1 \leq i \leq N \Big\} \,.
	  \end{aligned}
	\end{equation*}
	Consequently, we shall prove that
	\begin{equation*}
	  \begin{aligned}
	    \mathcal{M}_1^\kappa \cap \mathcal{M}_2^\kappa \neq \emptyset \,,
	  \end{aligned}
	\end{equation*}
	which is equivalent to show that there are $\eps_0 \in (0,1)$ and $\lambda \in (\tfrac{1}{2 \eps_0}, 1)$ such that
	\begin{equation}\label{Clm1-6}
	  \begin{aligned}
	    \tfrac{1}{2 \lambda \eps_0} < \tfrac{1}{ (1 - \tfrac{\nu_i}{\nu})^2 } (1- \lambda) \eps_0 \,, \ \forall \, 1 \leq i \leq N \,.
	  \end{aligned}
	\end{equation}
	
	Let $\lambda = \tfrac{1}{2 \eps_0} + \gamma$ for some $\gamma > 0$ to be determined. Then, we see
	\begin{equation}\label{Clm1-7}
	  \begin{aligned}
	    \lambda (1 - \lambda) \eps_0^2 = (\tfrac{1}{2 \eps_0} + \gamma) (1 - \tfrac{1}{2 \eps_0} - \gamma) \eps_0^2 < \tfrac{1}{4}
	  \end{aligned}
	\end{equation}
	and
	\begin{equation}\label{Clm1-8}
	  \begin{aligned}
	    \lim_{\substack{\eps_0 \rightarrow 1- \\ \gamma \rightarrow 0+ }} (\tfrac{1}{2 \eps_0} + \gamma) (1 - \tfrac{1}{2 \eps_0} - \gamma) \eps_0^2 = \tfrac{1}{4} \,.
	  \end{aligned}
	\end{equation}
	Furthermore, the condition \eqref{Key-Asmp} tells us that
	\begin{equation}\label{Clm1-9}
	  \begin{aligned}
	    \tfrac{(1 - \tfrac{\nu_i}{\nu})^2}{2} < \tfrac{1}{4} \, (\forall \, 1 \leq i \leq N) \,.
	  \end{aligned}
	\end{equation}
	We thereby conclude \eqref{Clm1-6} from \eqref{Clm1-7}, \eqref{Clm1-8} and \eqref{Clm1-9}. Namely, we have $\mathcal{M}_1^\kappa \cap \mathcal{M}_2^\kappa \neq \emptyset$ and Claim 1 holds.
	
	{\bf Claim 2.} {\em There are $\delta_1, \delta_2, \cdots , \delta_N > 0$ with $\sum_{i=1}^N z_i \delta_i = 0$ such that
	\begin{equation}\label{Cm2}
	  \left\{
	    \begin{aligned}
	      & \chi_\theta (1 - \eta_\theta - \eta_\theta') b - \sum_{i=1}^N \chi_i \tfrac{k_B \delta_i^2}{4 \eta_i \nu_i} - \chi_\phi \tfrac{k_B^2}{4 \eta_\phi} \big( \sum_{i=1}^N \tfrac{z_i \delta_i}{\nu_i} \big)^2 > 0 \,, \\
	      & \chi_\phi \big( \sum_{i=1}^N \tfrac{ z_i^2 \delta_i}{\nu_i} - \sum_{i=1}^N \eta_{\phi i} - \eta_\phi \big) - \chi_\theta \tfrac{ k_B^2}{4 \eta_\theta' b} \big( \sum_{i=1}^N \tfrac{z_i \delta_i}{\nu_i} \big)^2 - \sum_{i=1}^N \chi_i \tfrac{z_i^2 \delta_i^2}{4 \eta_i' k_B \nu_i} > 0 \,, \\
	      & \chi_i ( 1 - \eta_i - \eta_i' ) \tfrac{k_B}{\nu_i} - \chi_\theta \tfrac{N k_B^4}{4 \eta_\theta b \nu_i} - \chi_\phi \tfrac{k_B^2 z_i^2}{4 \eta_{\phi i}} ( \tfrac{1}{\nu_i} - \tfrac{1}{\nu} )^2 > 0 \,, \ 1 \leq i \leq N \,,
	    \end{aligned}
	  \right.
	\end{equation}
	hold for some positive constants $\chi_i \,, \ \eta_i \,, \ \eta_i' \,, \ \eta_{\phi i} \, (1 \leq i \leq N) \,, \ \eta_\theta \,, \ \eta_\theta' \,, \ \eta_\phi \,, \ \chi_\phi \,, \ \chi_\theta$, depending on $\delta_i (1 \leq i \leq N)$ and the all coefficients.}

    Indeed, we first choose $\chi_i \,, \ \eta_i \,, \ \eta_i' \,, \ \eta_{\phi i} \, (1 \leq i \leq N) \,, \ \eta_\phi \,, \ \chi_\phi $ as the same as in the process of proving Claim 1. We further take $\eta_\theta = \eta_\theta' = \tfrac{1}{3}$. Then the inequalities \eqref{Cm2} can be transformed into
    \begin{equation}\label{Cm2-1}
      \left\{
        \begin{aligned}
          & \tfrac{\chi_\theta}{\chi_\phi} > \sum_{i=1}^N \tfrac{\chi_i}{\chi_\phi} \tfrac{k_B}{4 b \nu_i (1 - \eps_0)} \delta_i^2 + \tfrac{k_B^2}{b} \tfrac{\big( \sum_{i=1}^N \tfrac{z_i \delta_i}{\nu_i} \big)^2}{ \sum_{i=1}^N \tfrac{z_i^2 \delta_i}{\nu_i} } \,, \\
          & \tfrac{\chi_\theta}{\chi_\phi} \big( \sum_{i=1}^N \tfrac{z_i \delta_i}{\nu_i} \big)^2 < \sum_{i=1}^N \tfrac{2b z_i^2}{3 k_B^2 \nu_i} \delta_i - \sum_{i=1}^N \tfrac{\chi_i}{\chi_\phi} \tfrac{2b z_i^2}{3 k_B^3 \nu_i} \delta_i^{2+\kappa} \,, \\
          & \tfrac{\chi_\theta}{\chi_\phi} < \tfrac{\chi_i}{\chi_\phi} \tfrac{4b (\eps_0 - \tfrac{1}{2} \delta_i^{-\kappa})}{3 N k_B^3} - \tfrac{4 b ( 1 - \tfrac{\nu_i}{\nu} )^2 }{3 N k_B^2} \delta_i^{-1} \,, \ 1 \leq i \leq N \,,
        \end{aligned}
      \right.
    \end{equation}
    where, from \eqref{Clm1-5}, $\tfrac{\chi_i}{\chi_\phi} (1 \leq i \leq N)$ satisfy
    \begin{equation}\label{Cm2-2}
      \begin{aligned}
        \tfrac{k_B (1 - \tfrac{\nu_i}{\nu})^2}{ \eps_0 - \tfrac{1}{2} \delta_i^{- \kappa} } \delta_i^{-1} < \tfrac{\chi_i}{\chi_\phi} < k_B \delta_i^{-1-\kappa} \,.
      \end{aligned}
    \end{equation}

    In order to prove \eqref{Cm2-1}, by the consideration of the Cauchy inequalities
    \begin{equation*}
      \begin{aligned}
        \big( \sum_{i=1}^N \tfrac{z_i \delta_i}{\nu_i} \big)^2 \leq \big( \sum_{i=1}^N \tfrac{z_i^2 \delta_i}{\nu_i} \big) \big( \sum_{i=1}^N \tfrac{\delta_i}{\nu_i} \big) \,, \quad \big( \sum_{i=1}^N \tfrac{z_i \delta_i}{\nu_i} \big)^2 \leq N \sum_{i=1}^N \tfrac{z_i \delta_i^2}{\nu_i^2} \,,
      \end{aligned}
    \end{equation*}
    it suffices to prove
    \begin{equation}\label{Cm2-3}
      \begin{aligned}
        \Big( \tfrac{N k_B}{4 b \nu_i (1 - \eps_0)} \tfrac{\chi_i}{\chi_\phi} \delta_i + \tfrac{N k_B^2}{b \nu_i} \Big) \delta_i < \tfrac{\chi_\theta}{\chi_\phi} < \min \{ \mathcal{A}_1 (\tfrac{\chi_i}{\chi_\phi}), \mathcal{A}_2 (\tfrac{\chi_i}{\chi_\phi}) \} \delta_i^{-1} \,,
      \end{aligned}
    \end{equation}
    where
    \begin{equation*}
      \begin{aligned}
        \mathcal{A}_1 (\tfrac{\chi_i}{\chi_\phi}) = \tfrac{2b}{3 N k_B^2} -\tfrac{2 b \delta_i^\kappa}{3 N k_B^3} \tfrac{\chi_i}{\chi_\phi} \delta_i \,, \quad \mathcal{A}_2 (\tfrac{\chi_i}{\chi_\phi}) = \tfrac{4b (\eps_0 - \tfrac{1}{2} \delta_i^{-\kappa})}{3 N k_B^3} \tfrac{\chi_i}{\chi_\phi} \delta_i - \tfrac{4 b (1 - \tfrac{\nu_i}{\nu})^2}{3 N k_B^2} \,.
      \end{aligned}
    \end{equation*}
    It is easy to see that $\mathcal{A}_1 (\tfrac{\chi_i}{\chi_\phi}), \mathcal{A}_2 (\tfrac{\chi_i}{\chi_\phi}) > 0$ for all $\tfrac{\chi_i}{\chi_\phi}$ given in \eqref{Cm2-2}. Without loss of generality, we fix
    \begin{equation*}
      \begin{aligned}
        \tfrac{\chi_i}{\chi_\phi} = \tfrac{1}{2} \Big[ \tfrac{k_B (1 - \tfrac{\nu_i}{\nu})^2}{ \eps_0 - \tfrac{1}{2} \delta_i^{- \kappa} } + \tfrac{k_B}{\delta_i^\kappa} \Big] \delta_i^{-1} \,.
      \end{aligned}
    \end{equation*}
	From \eqref{Clm1-0} and \eqref{Clm1-2}, we see that there are positive constants $C_1, C_2, > 0$, independent of $\kappa$ and $\delta_i (i=1,2, \cdots, N)$, such that
	\begin{equation*}
	  \begin{aligned}
	    0 < C_1 \leq \min \{ \mathcal{A}_1 (\tfrac{\chi_i}{\chi_\phi}), \mathcal{A}_2 (\tfrac{\chi_i}{\chi_\phi}) \} \leq C_2 \,.
	  \end{aligned}
	\end{equation*}
	By \eqref{Clm1-0} and \eqref{Clm1-2}, we notice that
	\begin{equation*}
	  \begin{aligned}
	    \tfrac{N k_B}{4 b \nu_i (1 - \eps_0)} \tfrac{\chi_i}{\chi_\phi} \delta_i + \tfrac{N k_B^2}{b \nu_i} \leq \tfrac{N k_B^2}{8 k \nu_i (1 - \eps_0)} \big[ \tfrac{(1 - \tfrac{\nu_i}{\nu})^2}{ \eps_0 (1 - \lambda)} + 2 \lambda \eps_0 \big] + \tfrac{N k_B^2}{k \nu_i} : = C_3 \,.
	  \end{aligned}
	\end{equation*}
	Therefore, in order to prove \eqref{Cm2-3}, it suffices to prove that there are some $\delta_i (1 \leq i \leq N)$ in \eqref{Clm1-0} such that
	\begin{equation*}
	  \begin{aligned}
	    C_3 \delta_i < \tfrac{\chi_\theta}{\chi_\phi} < C_1 \delta_i^{-1} \,, \ i = 1, 2, \cdots, N \,.
	  \end{aligned}
	\end{equation*}
	
	Since
	\begin{equation*}
	  \begin{aligned}
	    \lim_{\kappa \rightarrow 0+} (\tfrac{1}{2 \lambda \eps_0})^\frac{1}{\kappa} = 0 \ \textrm{ and } \ (\tfrac{1}{2 \lambda \eps_0})^\frac{1}{\kappa} > 0 \,,
	  \end{aligned}
	\end{equation*}
	there is a $\kappa_0 > 0$ such that for all $0 < \kappa \leq \kappa_0$,
	\begin{equation*}
	  \begin{aligned}
	    0 < ( \tfrac{1}{2 \lambda \eps_0} )^\frac{1}{\kappa} \leq \tfrac{1}{2} \sqrt{\tfrac{C_1}{C_3}} \,.
	  \end{aligned}
	\end{equation*}
	Fix $\kappa \in (0, \kappa_0]$, we take any $\delta_i$ such that
	\begin{equation*}
	  \begin{aligned}
	    (\tfrac{1}{2 \lambda \eps_0})^\frac{1}{\kappa} < \delta_i < \min \Big\{ \sqrt{\tfrac{C_1}{C_3}} ,  \Big[ \min \{ 1, \tfrac{(1 - \lambda ) \eps_0 }{(1 - \frac{\nu_i}{\nu})^2} \}  \Big]^\frac{1}{\kappa} \Big\} \ (i = 1, 2, \cdots, N) \,.
	  \end{aligned}
	\end{equation*}
	These $\delta_i$ are that we want to find, and Claim 2 holds.
	
	At the final step, based on Claim 2, we will close the proof of Proposition \ref{Prop-AEF}. If $\sum_{i=1}^N \tfrac{z_i \delta_i}{\nu_i} = 0$ and $\nu_1 = \cdots = \nu_N = \nu > 0$, $\chi_m$ can be taken as any positive constant. We only need to consider the case $\sum_{i=1}^N \tfrac{z_i \delta_i}{\nu_i} \neq 0$ or $\nu_j \neq \nu$ for some $j$. Since $\eta_m = \eta_m' = \tfrac{1}{4}$, the (H1) and (H4) imply
	\begin{equation*}
	  \begin{aligned}
	    \chi_m & < \tfrac{\chi_\theta (1 - \eta_\theta - \eta_\theta') b - \sum_{i=1}^N \chi_i \tfrac{k_B \delta_i^2}{4 \eta_i \nu_i} - \chi_\phi \tfrac{k_B^2}{4 \eta_\phi} \big( \sum_{i=1}^N \tfrac{z_i \delta_i}{\nu_i} \big)^2}{k_B \nu ( \sum_{i=1}^N \tfrac{z_i \delta_i}{\nu_i} )^2} : = \mathcal{C}_1 \,, \\
	    \chi_m & < \min_{1 \leq i \leq N} \tfrac{\chi_i ( 1 - \eta_i - \eta_i' ) \tfrac{k_B}{\nu_i} - \chi_\theta \tfrac{N k_B^4}{4 \eta_\theta b \nu_i} - \chi_\phi \tfrac{k_B^2 z_i^2}{4 \eta_{\phi i}} ( \tfrac{1}{\nu_i} - \tfrac{1}{\nu} )^2}{k_B \nu z_i^2 (\tfrac{1}{\nu_i} - \tfrac{1}{\nu})^2} : = \mathcal{C}_2 \,.
	  \end{aligned}
	\end{equation*}
	Here Claim 2 guarantees that $\mathcal{C}_1, \mathcal{C}_2 > 0$. Consequently, we take $\chi_m = \tfrac{1}{2} \min \{ \mathcal{C}_1, \mathcal{C}_2 \} > 0$ and the proof of Proposition \ref{Prop-AEF} is finished.
	
\end{proof}

\section{Global well-posedness with small initial data}\label{Sec:Global}

In this section, we will prove the global well-posedness of the $(\rho_1, \rho_2, \cdots, \rho_N, T)$-system \eqref{PNPF-1}-\eqref{IC-PNPF} near the equilibrium $(\delta_1, \delta_2, \cdots, \delta_N, 1)$, where $(\delta_1, \delta_2, \cdots, \delta_N)$ belongs to the equilibria set $\mathcal{S}_{eq}$ given in Definition \ref{Def-AEF}. In this sense, we focus on the perturbed system \eqref{PNPF-Perturbed} with initial data
\begin{equation}\label{IC-perturbed}
  \begin{aligned}
    n_i (0, x) = n_i^\inn (x) \,, \ i = 1, 2, \cdots, N \,, \ \theta (0, x) = \theta^\inn (x) \,,
  \end{aligned}
\end{equation}
where $n_i^\inn (x) = \rho_i^\inn (x) - \delta_i$, $i = 1, 2, \cdots, N$, and $\theta^\inn (x) = T^\inn (x) - 1$.

We employ the mollifier method to construct the approximate solutions:
\begin{equation}\label{Apprx-Syst}
  \left\{
    \begin{array}{l}
      \partial_t n_i^\kappa - \tfrac{k_B}{\nu_i} \Delta \mathcal{J}_\kappa n_i^\kappa = \tfrac{k_B \delta_i}{\nu_i} \Delta \mathcal{J}_\kappa \theta^\kappa + \tfrac{z_i \delta_i}{\nu_i} \Delta \mathcal{J}_\kappa \phi^\kappa + \mathcal{J}_\kappa R_{n_i} ( \mathcal{J}_\kappa n_i^\kappa, \mathcal{J}_\kappa m^\kappa, \phi^\kappa, \u_0^\kappa ) \,, \\[2mm]
      \qquad \qquad \qquad - \Delta \phi^\kappa = \tfrac{1}{\eps} \mathcal{J}_\kappa m^\kappa \,, \\[2mm]
      \lambda_0 \Delta \u_0^\kappa = \nabla P_0^\kappa + \sum_{i=1}^N k_B \mathcal{J}_\kappa \nabla ( n_i^\kappa + \delta_i \theta^\kappa ) \\
      \qquad \qquad \qquad \qquad \qquad \qquad \qquad + \mathcal{J}_\kappa R_{\u_0} ( \mathcal{J}_\kappa n_1^\kappa, \cdots, \mathcal{J}_\kappa n_N^\kappa, \mathcal{J}_\kappa \theta^\kappa, \mathcal{J}_\kappa m^\kappa, \phi^\eps ) \,, \\[2mm]
      \qquad \qquad \qquad \qquad \nabla \cdot \u_0^\kappa = 0 \,, \\[2mm]
      a \partial_t \theta^\kappa - b \Delta \mathcal{J}_\kappa \theta^\kappa = \sum_{i=1}^N \tfrac{k_B^2}{\nu_i} \Delta \mathcal{J}_\kappa n_i^\kappa \\
      \qquad \qquad \qquad  + \sum_{i=1}^N \tfrac{k_B z_i \delta_i}{\nu_i} \Delta \mathcal{J}_\kappa \phi^\kappa + \mathcal{J}_\kappa R_\theta ( \mathcal{J}_\kappa n_1^\kappa, \cdots, \mathcal{J}_\kappa n_N^\kappa, \mathcal{J}_\kappa \theta^\kappa, \phi^\kappa, \u_0^\kappa ) \,, \\[2mm]
      \partial_t m^\kappa - \tfrac{k_B}{\nu} \Delta \mathcal{J}_\kappa m^\kappa + \tfrac{1}{\eps} \big( \sum_{i=1}^N \tfrac{z_i^2 \delta_i}{\nu_i} \big) \mathcal{J}_\kappa m^\kappa = k_B \sum_{i=1}^N ( \tfrac{1}{\nu_i} - \tfrac{1}{\nu} ) z_i \Delta \mathcal{J}_\kappa n_i^\kappa \\
      \qquad \qquad \qquad + k_B \sum_{i=1}^N \tfrac{z_i \delta_i}{\nu_i} \Delta \mathcal{J}_\kappa \theta^\kappa + \mathcal{J}_\kappa R_m ( \mathcal{J}_\kappa n_1^\kappa, \cdots, \mathcal{J}_\kappa n_N^\kappa, \mathcal{J}_\kappa \theta^\kappa, \phi^\kappa, \u_0^\kappa ) \,,
    \end{array}
  \right.
\end{equation}
with initial data
\begin{equation}\label{IC-Apprx-Syst}
  \begin{aligned}
     n_i^\kappa (0, x) = \mathcal{J}_\kappa n_i^\inn (x) \,, \ i = 1, 2, \cdots, N \,, \ \theta^\kappa (0, x) = \mathcal{J}_\kappa \theta^\inn (x) \,.
  \end{aligned}
\end{equation}
It is natural to know that $m^\kappa (0,x) = \sum_{j=1}^N z_j \mathcal{J}_\kappa n_j^\inn (x)$. The mollifier operator $\mathcal{J}_\kappa$ is defined as
$$ \mathcal{J}_\kappa f : = \mathcal{F}^{-1} \Big( \mathbf{1}_{|\xi| \leq \tfrac{1}{\kappa}} ( \mathcal{F} f ) (\xi) \Big) \,,$$
where $\mathcal{F}$ is the standard Fourier transform over the whole space $\R^3$ and $\mathcal{F}^{-1}$ is its inverse transform. Moreover, the mollifier operator $\mathcal{J}_\kappa$ has the property $\mathcal{J}_\kappa^2 = \mathcal{J}_\kappa$.

In the arguments proving the convergence ($\kappa \rightarrow 0$) of the approximate solutions \eqref{Apprx-Syst}-\eqref{IC-Apprx-Syst}, it is essential to obtain uniform (in $\kappa > 0$) energy estimates of \eqref{Apprx-Syst}-\eqref{IC-Apprx-Syst}, whose derivations are the same as the derivations of the a priori estimates for the perturbed system \eqref{PNPF-Perturbed} with the initial data \eqref{IC-perturbed}. The convergence arguments are a standard process. For simplicity, we will only establish a priori estimates for the smooth solutions of \eqref{PNPF-Perturbed}-\eqref{IC-perturbed}. Therefore, let us assume in the rest of this section that $( n_1, \cdots, n_N, \theta, m )$ is a local smooth solution to \eqref{PNPF-Perturbed}-\eqref{IC-perturbed} on some time interval.

We first introduce the following energy functional $\mathscr{E}_s (t)$
\begin{equation}\label{Es}
  \begin{aligned}
    \mathscr{E}_s (t) : = \chi_\phi \eps \| \nabla \phi \|^2_{H^s} + \sum_{i=1}^N \chi_i \| n_i \|^2_{H^s} + a \chi_\theta \| \theta \|^2_{H^s} + \chi_m \| m \|^2_{H^s} \,,
  \end{aligned}
\end{equation}
and the energy dissipation rate functional $\mathscr{D}_s (t)$
\begin{equation}\label{Ds}
  \begin{aligned}
    \mathscr{D}_s (t) : = & \sum_{i=1}^N d_i \| \nabla n_i \|^2_{H^s} + d_\theta \| \nabla \theta \|^2_{H^s} + d_m \| \nabla m \|^2_{H^s} \\
    & + \tilde{d}_m \| m \|^2_{H^s} + d_\phi \| \nabla \phi \|^2_{H^s} + \tilde{d}_\phi \| \Delta \phi \|^2_{H^s} + \| \nabla \u_0 \|^2_{H^s} \,,
  \end{aligned}
\end{equation}
where the constants $d_i (1 \leq i \leq N), d_\theta, d_m, \tilde{d}_m, d_\phi, \tilde{d}_\phi$ are given as
\begin{align*}
  & d_i : = \chi_i ( 1 - \eta_i - \eta_i' ) \tfrac{k_B}{\nu_i} - \chi_\theta \tfrac{N k_B^4}{4 \eta_\theta b \nu_i} - \chi_m \tfrac{k_B \nu z_i^2}{4 \eta_m} ( \tfrac{1}{\nu_i} - \tfrac{1}{\nu} )^2 - \chi_\phi \tfrac{k_B^2 z_i^2}{4 \eta_{\phi i}} ( \tfrac{1}{\nu_i} - \tfrac{1}{\nu} )^2 > 0 \,, \\
  & d_\theta : = \chi_\theta (1 - \eta_\theta - \eta_\theta') b - \chi_m \tfrac{k_B \nu}{4 \eta_m'} \big( \sum_{i=1}^N \tfrac{z_i \delta_i}{\nu_i} \big)^2 - \sum_{i=1}^N \chi_i \tfrac{k_B \delta_i^2}{4 \eta_i \nu_i} - \chi_\phi \tfrac{k_B^2}{4 \eta_\phi} \big( \sum_{i=1}^N \tfrac{z_i \delta_i}{\nu_i} \big)^2 > 0 \,, \\
  & d_m : = \chi_m (1 - \eta_m - \eta_m' ) \tfrac{k_B}{\nu} > 0 \,, \ \tilde{d}_m : = \chi_m \tfrac{1}{\eps} \sum_{i=1}^N \tfrac{z_i^2 \delta_i}{\nu_i} > 0 \,, \ \tilde{d}_\phi : = \chi_\phi \tfrac{k_B \eps}{\nu } > 0 \,,  \\
  & d_\phi : = \chi_\phi \big( \sum_{i=1}^N \tfrac{ z_i^2 \delta_i}{\nu_i} - \sum_{i=1}^N \eta_{\phi i} - \eta_\phi \big) - \chi_\theta \tfrac{ k_B^2}{4 \eta_\theta' b} \big( \sum_{i=1}^N \tfrac{z_i \delta_i}{\nu_i} \big)^2 - \sum_{i=1}^N \chi_i \tfrac{z_i^2 \delta_i^2}{4 \eta_i' k_B \nu_i} > 0 \,.
\end{align*}

\begin{proposition}[A priori estimates]\label{Prop-Apriori}
	Let $s \geq 3$ be an integer. Assume that the function $(n_1, \cdots, n_N, \theta, m, \phi, \u_0)$ is a sufficiently smooth solution on the interval $[0, \tau_0]$ to the perturbed system \eqref{PNPF-Perturbed} with initial data \eqref{IC-perturbed}. Then there is a constant $C_0 > 0$, depending only on $s$, $N$ and the all coefficients, such that
	\begin{equation}\label{Apriori-Est}
	  \begin{aligned}
	    \tfrac{\d}{\d t} \mathscr{E}_s (t) + 2 \mathscr{D}_s (t) \leq C_0 \big( 1 + K({\bf n}) + G({\bf n}) \big) \big( 1 + \mathscr{E}_s^{\frac{s}{2} + 1} (t) \big) \mathscr{E}_s^\frac{1}{2} (t) \mathscr{D}_s (t)
	  \end{aligned}
	\end{equation}
	for all $t \in [0, \tau_0]$, where
	\begin{align}
	  \label{K(n)} & K ({\bf n}) : = \| f ({\bf n}) \|_{L^\infty} + \sum_{i=1}^N \sum_{j=1}^s \| \tfrac{\partial^j f}{\partial n_i^j} ({\bf n}) \|_{L^\infty} \,, \\
	  \label{G(n)} & G({\bf n}) := \sum_{i=1}^N \big\| \tfrac{f({\bf n})}{\delta_i + n_i} \big\|_{L^\infty} + \sum_{i=1}^N \sum_{v=1}^N \sum_{w=1}^s \Big\| \tfrac{\partial^w \big( \tfrac{f({\bf n})}{\delta_i + n_i} \big)}{\partial n_v^w} \Big\|_{L^\infty} \,, \\
	 \label{f(n)} & f ({\bf n}) = \tfrac{1}{a + k_B {\bf c} \cdot {\bf n}} = \tfrac{1}{a + \sum_{i=1}^N k_B c_i n_i} \,,
	\end{align}
	with the vectors ${\bf c} = (c_1, \cdots, c_N)$ and ${\bf n} = (n_1, \cdots, n_N)$ belonging to $\R^N$.
\end{proposition}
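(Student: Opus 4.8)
The plan is to run a weighted $H^s$ energy estimate on the perturbed system \eqref{PNPF-Perturbed}, organised so that its linear part reproduces, now at every derivative order $|\a|\le s$, the dissipative algebra already extracted in \eqref{L2-Summary-L}. Concretely, for each multi-index $\a$ with $|\a|\le s$ I would apply $\partial^\a$ to the $n_i$-, $\theta$- and $m$-equations and take the $L^2(\R^3)$ inner product of the resulting identities with $\chi_i\partial^\a n_i$, $\chi_\theta\partial^\a\theta$ and $\chi_m\partial^\a m$ respectively, integrating by parts. The potential $\phi$ is not tested directly: using $-\Delta\phi=\tfrac1\eps m$ one rewrites the $m$-equation as the evolution equation for $-\eps\Delta\phi$ displayed just before \eqref{Lnrzd-phi}, applies $\partial^\a$, and tests against $\chi_\phi\partial^\a\phi$, which is what produces the terms $\tilde d_\phi\|\Delta\phi\|^2_{H^s}$ and $d_\phi\|\nabla\phi\|^2_{H^s}$ together with $\tfrac{\d}{\d t}(\chi_\phi\eps\|\nabla\phi\|^2_{H^s})$; throughout, $\phi$ is slaved to $m$ via $\|\nabla\phi\|_{H^s}\thicksim\|m\|_{H^{s-1}}$, $\|\Delta\phi\|_{H^s}=\tfrac1\eps\|m\|_{H^s}$ and $\|\nabla\Delta\phi\|_{H^s}=\tfrac1\eps\|\nabla m\|_{H^s}$. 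Summing over $|\a|\le s$ and over the four blocks and splitting the linear cross-terms by Young's inequality with exactly the auxiliary constants $\eta_\bullet$ of Section~\ref{Sec:Linear-Dissp}, the linear contribution collapses to $\tfrac{\d}{\d t}\mathscr E_s(t)+2\mathscr D_s(t)$ on the left (the $2$ clearing the $\tfrac12$ in front of the time derivative in \eqref{L2-Summary-L}); here the membership $(\delta_1,\dots,\delta_N)\in\mathcal S_{eq}$, i.e. $\mathrm{(H1)}$--$\mathrm{(H4)}$, is precisely what guarantees $d_i,d_\theta,d_m,\tilde d_m,d_\phi,\tilde d_\phi>0$ in \eqref{Ds}.

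The second ingredient is the solvent velocity. Applying $\partial^\a$ to $\lambda_0\Delta\u_0=\nabla P_0+\sum_i k_B\nabla(n_i+\delta_i\theta)+R_{\u_0}$ and testing against $\partial^\a\u_0$, the pressure drops by $\nabla\cdot\u_0=0$ and — crucially — so do every linear term $\nabla(n_i+\delta_i\theta)$ and the part $\sum_i k_B\nabla(n_i\theta)$ of $R_{\u_0}$, leaving $\lambda_0\|\partial^\a\nabla\u_0\|_{L^2}^2=-\l\partial^\a(m\nabla\phi),\partial^\a\u_0\r$. The residual term is then neutralised by the cancellation $-\tfrac1{\lambda_0}\l m\nabla\phi,\u_0\r=\tfrac{\eps}{\lambda_0}\l\nabla\phi\otimes\nabla\phi,\nabla\u_0\r$ of the introduction (and its $\partial^\a$-commuted version), which rewrites it as an integrand built only from $\nabla\phi$ and $\nabla\u_0$, bounded by $\mathscr E_s^{1/2}\mathscr D_s$ via $H^2(\R^3)\hookrightarrow L^\infty$; adding this Stokes $H^s$ identity, suitably weighted, to the main estimate installs $2\|\nabla\u_0\|^2_{H^s}$ on the left-hand dissipation. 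The same family of identities disposes of the transport couplings $\u_0\cdot\nabla n_i$, $\u_0\cdot\nabla\theta$ and $\u_0\cdot\nabla m$ (the last becoming $\eps\,\u_0\cdot\nabla\Delta\phi$ on the $\phi$-side, hence $-\eps\l\nabla\phi\otimes\nabla\phi,\nabla\u_0\r$ through $\l\u_0\cdot\nabla\Delta\phi,\phi\r=-\l\nabla\phi\otimes\nabla\phi,\nabla\u_0\r$), together with the term $\sum_i\tfrac{z_i^2}{\nu_i}\nabla\!\cdot\!(n_i\nabla\phi)$ of $R_m$, which on the $\phi$-side contributes $-\sum_i\tfrac{z_i^2}{\nu_i}\l n_i,|\nabla\phi|^2\r$ via $\l n_i\Delta\phi+\nabla n_i\cdot\nabla\phi,\phi\r=-\l n_i,|\nabla\phi|^2\r$; in each case the point is that neither $\|\phi\|_{L^2}$ nor $\|\u_0\|_{L^2}$, both absent from $\mathscr E_s$ and $\mathscr D_s$, is ever called upon.

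It then remains to estimate the genuinely nonlinear pieces of $R_{n_i}$, $R_{\u_0}$, $R_m$, $R_\theta$ and $R_\theta^\star$ (see \eqref{R-ni}--\eqref{R-theta-*}) after $\partial^\a$. For the polynomial ones — the transport terms, the products $n_i m$ and $\nabla n_i\cdot\nabla\phi$, and the second-order terms $\Delta(n_i\theta)$, $\Delta(n_i+\delta_i\theta+n_i\theta)\theta$, $\nabla n_i\cdot\nabla(\cdots)$ of $R_m$ and $R_\theta^\star$ — I would use the Moser/Kato--Ponce product and commutator estimates $\|\partial^\a(fg)\|_{L^2}\lesssim\|f\|_{L^\infty}\|g\|_{H^s}+\|g\|_{L^\infty}\|f\|_{H^s}$, together with one integration by parts moving a derivative off each Laplacian onto the test field, e.g. $\l\partial^\a\Delta(n_i\theta),\partial^\a m\r=-\l\partial^\a\nabla(n_i\theta),\partial^\a\nabla m\r$ is bounded by $\|\nabla(n_i\theta)\|_{H^s}\|\nabla m\|_{H^s}$ with both factors carried by $\mathscr D_s$. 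More generally one arranges that every top-order norm not controlled by $\mathscr D_s$ (such as $\|n_i\|_{H^s}$ or $\|\theta\|_{H^s}$) is paired with an $L^\infty$-norm of a derivative — hence $\lesssim\mathscr D_s^{1/2}$ — so that all such terms are $\lesssim C_0\mathscr E_s^{1/2}\mathscr D_s$. For the rational nonlinearity of $R_\theta$ — the prefactors $\tfrac{a}{a+\sum_i k_B c_i n_i}$ and $\tfrac{\sum_i k_B c_i n_i}{a+\sum_i k_B c_i n_i}$, i.e. compositions with $f(\mathbf n)=\tfrac1{a+k_B\mathbf c\cdot\mathbf n}$ from \eqref{f(n)}, together with the division by $\delta_i+n_i$ hidden in $R_\theta^\star$ — I would apply the Fa\`a di Bruno rule to $\partial^\a f(\mathbf n)$ and $\partial^\a\big(f(\mathbf n)/(\delta_i+n_i)\big)$; the $L^\infty$-norms of $f$, of $f/(\delta_i+n_i)$, and of their $\mathbf n$-derivatives up to order $s$ that surface are exactly the quantities $K(\mathbf n)$ and $G(\mathbf n)$ of \eqref{K(n)}--\eqref{G(n)}, while the number of $H^s$-factors that can co-occur in one monomial of such an expansion, controlled by $s$, is what pushes the polynomial power of $\mathscr E_s$ up to $\mathscr E_s^{s/2+1}$. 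Collecting all contributions yields \eqref{Apriori-Est}.

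The main obstacle will be the temperature estimate, since $R_\theta$ carries the two worst features simultaneously: first, the composition with the rational function $f(\mathbf n)$, whose high-order derivatives must be bounded uniformly — this is the reason for tracking $K(\mathbf n)$ and $G(\mathbf n)$ and for insisting that $\delta_i>0$, so that $a+\sum_i k_B c_i n_i$ and $\delta_i+n_i$ stay bounded away from $0$ once $\mathscr E_s$ is small; and second, the genuinely second-order pieces $\Delta\theta$, $\Delta n_i$ and $\Delta(n_i\theta)$ hidden inside $R_\theta$ and $R_\theta^\star$, which at $|\a|=s$ sit at order $s+2$ and are only controllable after an integration by parts expressing them through $\|\nabla\theta\|_{H^s}$, $\|\nabla n_i\|_{H^s}$ and $\|\nabla m\|_{H^s}$ of $\mathscr D_s$; one must check carefully that after all these manipulations nothing uncontrolled — $\|n_i\|_{H^{s+2}}$, $\|\theta\|_{H^{s+2}}$, $\|\phi\|_{L^2}$, $\|\u_0\|_{L^2}$ — survives. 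The remaining point is purely algebraic: that the Young split of the $n_i$/$\theta$/$m$/$\phi$ cross-terms, with the $\eta_\bullet$ of Section~\ref{Sec:Linear-Dissp}, leaves the full $2\mathscr D_s$ on the left, which is exactly the content of $(\delta_1,\dots,\delta_N)\in\mathcal S_{eq}$ — shown nonempty in Proposition~\ref{Prop-AEF}.
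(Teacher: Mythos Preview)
Your proposal is correct and follows essentially the same approach as the paper: weighted $H^s$ energy estimates with the constants $\chi_i,\chi_\theta,\chi_m,\chi_\phi$ from Section~\ref{Sec:Linear-Dissp}, the $\phi$-contribution obtained by testing the $m$-equation against $\chi_\phi\partial^\a\phi$, the Stokes identity \eqref{High-u0} to inject $\|\nabla\u_0\|^2_{H^s}$, and the cancellations \eqref{Key-Cnc-u0}--\eqref{Key-Rlts} to avoid $\|\phi\|_{L^2}$ and $\|\u_0\|_{L^2}$. The paper carries out the nonlinear estimates via explicit Sobolev embeddings ($L^3$, $L^4$, $L^\infty$) and Lemma~\ref{Lmm-ChainRule} rather than invoking Kato--Ponce by name, but the organisation into the five blocks $R_{n_i}$, $R_m$, $R_{\u_0}$, $R_\theta$, and $\l\partial^\a R_m,\partial^\a\phi\r$ --- with the $R_\theta$ step the longest due to the rational prefactor $f(\mathbf n)$ and the hidden Laplacians --- matches your outline exactly.
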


Before proving this proposition, we introduce a useful lemma.

\begin{lemma}[Lemma 3.2 of \cite{Jiang-Luo-Tang-2019-M3AS}]\label{Lmm-ChainRule}
	Let $f : \R^N \rightarrow \R$ be a smooth function and ${\bf n} = (n_1, \cdots, n_N) : \R^3 \rightarrow \R^N$ be a vector-valued function belonging to $H^{|\a|}$ for any multi-index $\a \neq 0$. Then,
	\begin{equation}\label{f(n)-equ}
	  \begin{aligned}
	    \partial^\a f ({\bf n}) = \sum_{i=1}^N \sum_{j=1}^{|\a|} \frac{\partial^j f}{\partial n_i^j} ({\bf n}) \sum_{\substack{ \sum_{l=1}^j \a_l = \a \\ |\a_l| \geq 1 }} \prod_{l=1}^j \partial^{\a_l} n_i \,.
	  \end{aligned}
	\end{equation}
	Moreover, if $|\a| \leq s$ $(s \geq 2)$ is further assumed, we deduce from the Sobolev theory that
	\begin{equation}\label{f(n)-norm}
	  \begin{aligned}
	    \| \partial^\a f ({\bf n}) \|_{L^2} \lesssim \sum_{i=1}^N \sum_{j=1}^{s} \| \tfrac{\partial^j f}{\partial n_i^j} ({\bf n}) \|_{L^\infty} \| \nabla n_i \|_{H^{s-1}} \big( 1 + \| \nabla n_i \|^{s-1}_{H^{s-1}} \big) \,.
	  \end{aligned}
	\end{equation}
\end{lemma}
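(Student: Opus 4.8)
The proof separates into the combinatorial identity \eqref{f(n)-equ} and the quantitative bound \eqref{f(n)-norm} that it entails. I would prove the first by induction on the order $|\a|$ (a multivariate Fa\`a di Bruno formula) and the second by taking $L^2$ norms, factoring out the $L^\infty$ norms of the composed derivatives of $f$, and invoking Moser/Gagliardo--Nirenberg product estimates on $\R^3$. By mollification it suffices throughout to treat smooth $\mathbf n$.

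\emph{Step 1 (the chain-rule identity).} Induct on $k=|\a|$. For $k=1$ the claim is the ordinary chain rule $\partial^\a f(\mathbf n)=\sum_{i=1}^N\tfrac{\partial f}{\partial n_i}(\mathbf n)\,\partial^\a n_i$, which is exactly the $j=1$ term (the only admissible one, since a length-one multi-index has only the trivial decomposition into pieces of length $\geq1$). For the step, write $\a=\beta+e$ with $|e|=1$, $|\beta|=k$, apply $\partial^e$ to the order-$k$ formula, and expand by the Leibniz rule on each product $\prod_{l=1}^{j}\partial^{\beta_l}n_i$ together with the chain rule on the coefficient $\tfrac{\partial^j f}{\partial n_i^j}(\mathbf n)$. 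When the new derivative hits the coefficient it produces a term $\tfrac{\partial^{j+1}f}{\partial n_i^{j+1}}(\mathbf n)\,\bigl(\partial^e n_i\bigr)\prod_{l=1}^{j}\partial^{\beta_l}n_i$, in which $e,\beta_1,\dots,\beta_j$ is a decomposition of $\a$ into $j+1$ pieces of length $\geq1$; when it hits a factor $\partial^{\beta_l}n_i$ it becomes $\partial^{\beta_l+e}n_i$, i.e.\ it refines the decomposition while keeping $j$ pieces. Re-indexing $j\mapsto j$, resp.\ $j\mapsto j+1$, and collecting contributions reproduces the right-hand side of \eqref{f(n)-equ} at order $k+1$; the only care needed is that every ordered decomposition of $\a$ is reached, which it is (repeated occurrences are harmless, since this is an equality of functions). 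For $N\geq2$ the same expansion also generates mixed partials of $f$; these carry precisely the product structure displayed in \eqref{f(n)-equ}, so it is harmless to read $\tfrac{\partial^j f}{\partial n_i^j}$ there as shorthand for a generic $j$-th order partial of $f$ --- only the order $j$ and the product structure are used below.

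\emph{Step 2 (the $L^2$ bound).} Assume $|\a|\leq s$ with $s\geq2$, take $\|\cdot\|_{L^2(\R^3)}$ in \eqref{f(n)-equ}, apply the triangle inequality, and in each term pull $\tfrac{\partial^j f}{\partial n_i^j}(\mathbf n)$ out in $L^\infty$. It remains to estimate $\bigl\|\prod_{l=1}^{j}\partial^{\a_l}n_i\bigr\|_{L^2}$ with $1\leq|\a_l|$ and $\sum_l|\a_l|=|\a|\leq s$, whence $1\leq j\leq s$ and $|\a_l|\leq s-(j-1)$. I would bound this by a product inequality: distribute the $\leq s$ derivatives among the $j$ factors, placing factor $l$ in $L^{p_l}(\R^3)$ with $\sum_l\tfrac1{p_l}=\tfrac12$ and choosing the $p_l$ so that $\|\partial^{\a_l}n_i\|_{L^{p_l}}\lesssim\|\nabla n_i\|_{H^{s-1}}$ for every $l$ --- concretely, the highest-order factor is kept in $L^2$ (directly $\leq\|\nabla n_i\|_{H^{s-1}}$) and the others are placed in $L^{p_l}$ ($p_l>2$) controlled via the Sobolev embeddings $\dot H^1\hookrightarrow L^6$ and $H^2\hookrightarrow L^\infty$ on $\R^3$, with the low-regularity endpoint $s=2$, $j=2$ handled by $\|\nabla n_i\|_{L^4}^2\lesssim\|\nabla n_i\|_{H^1}^2$. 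This gives $\bigl\|\prod_{l=1}^{j}\partial^{\a_l}n_i\bigr\|_{L^2}\lesssim\|\nabla n_i\|_{H^{s-1}}^{\,j}$. Summing over the finitely many ($s$-dependent) decompositions and over $i,j$, and using the elementary inequality $t^j\leq t(1+t^{s-1})$ valid for all $t\geq0$ and $1\leq j\leq s$, yields \eqref{f(n)-norm}.

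\emph{Main obstacle.} Step 1 is essentially careful bookkeeping once the induction is organized. The real work is the product estimate in Step 2: arranging the H\"older/Sobolev exponents so that \emph{each} of the $j$ factors is dominated by $\|\nabla n_i\|_{H^{s-1}}$ alone --- with neither $\|n_i\|_{L^2}$ entering (ruled out because every $\a_l\neq0$) nor $\|\nabla n_i\|_{L^\infty}$ (unavailable at $s=2$) --- and checking that the borderline cases, notably $|\a_l|=s-1$ and the cubic-type products arising when $j=s$, still close in three space dimensions.
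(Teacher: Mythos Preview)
The paper does not give a proof of this lemma; it is simply quoted from \cite{Jiang-Luo-Tang-2019-M3AS}. So there is no proof in the paper to compare against, and I can only comment on your argument on its own merits.

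Your approach is the standard one and is essentially correct. The induction in Step~1 is the usual multivariate Fa\`a di Bruno bookkeeping, and in Step~2 the H\"older/Sobolev splitting you describe (highest-order factor in $L^2$, remaining factors in $L^{p_l}$ via $H^{s-1}(\R^3)\hookrightarrow L^{p_l}$, with the endpoint $s=2$, $j=2$ handled by $L^4\times L^4$) does close for every admissible decomposition in three dimensions; the inequality $t^j\le t(1+t^{s-1})$ then collapses the powers as stated.

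You have also put your finger on the one genuine subtlety: as literally written, \eqref{f(n)-equ} omits the mixed partials $\partial^{\beta} f$ with $\beta$ not a multiple of a coordinate direction, together with the corresponding cross products $\prod_l \partial^{\a_l} n_{i_l}$ with different $i_l$'s, which certainly appear when $N\ge 2$. Your remark that the identity should be read schematically is correct and, more to the point, it does not affect \eqref{f(n)-norm}: the extra terms carry an $L^\infty$ coefficient of the same order $j$, and the cross product $\prod_l \|\nabla n_{i_l}\|_{H^{s-1}}$ is dominated by $\tfrac{1}{j}\sum_l \|\nabla n_{i_l}\|_{H^{s-1}}^{\,j}$ via Young's inequality, which is already absorbed in the right-hand side of \eqref{f(n)-norm}. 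It would be worth saying this explicitly rather than leaving it as a parenthetical.
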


\begin{proof}[Proof of Proposition \ref{Prop-Apriori}]
	For any multi-index $\a \in \mathbb{N}^3$ with $|\a| \leq s$ ($s \geq 3$), we act the derivative operator $\partial^\a$ on the evolutions of $(n_i, \theta, m)$ $(i = 1, \cdots, N)$ in \eqref{PNPF-Perturbed} and employ the similar arguments in deriving the basic energy law \eqref{L2-Summary-L}. We thereby have
	\begin{equation}\label{High-ni-theta-m}
	  \begin{aligned}
	    & \tfrac{1}{2} \tfrac{\d}{\d t} \big( \chi_\phi \eps \| \nabla \partial^\a \phi \|^2_{L^2} + \sum_{i=1}^N \chi_i \| \partial^\a n_i \|^2_{L^2} + a \chi_\theta \| \partial^\a \theta \|^2_{L^2} + \chi_m \| \partial^\a m \|^2_{L^2} \big) \\
	    & + \sum_{i=1}^N d_i \| \nabla \partial^\a n_i \|^2_{L^2} + d_\theta \| \nabla \partial^\a \theta \|^2_{L^2} + d_m \| \nabla \partial^\a m \|^2_{L^2} \\
	    & + \tilde{d}_m \| \partial^\a m \|^2_{L^2} + d_\phi \| \nabla \partial^\a \phi \|^2_{L^2} + \tilde{d}_\phi \| \Delta \partial^\a \phi \|^2_{L^2} \\
	    \leq & \sum_{i=1}^N \chi_i \l \partial^\a R_{n_i} , \partial^\a n_i \r + \chi_\theta \l \partial^\a R_\theta , \partial^\a \theta \r + \chi_m \l \partial^\a R_m , \partial^\a m \r + \chi_\phi \l \partial^\a R_m , \partial^\a \phi \r  \,,
	  \end{aligned}
	\end{equation}
	where the symbols $R_{n_i}$, $R_m$ and $R_\theta$ are defined in \eqref{R-ni}, \eqref{R-m} and \eqref{R-theta}, respectively.
	
	Next we will apply the derivative operator $\partial^\a$ ($|\a| \leq s$) and the Leray projection $\mathcal{P}$ on the third $\u_0$-equation of \eqref{PNPF-Perturbed}. The incompressibility $\nabla \cdot \u_0 = 0$ tells us
	\begin{equation*}
	  \begin{aligned}
	    \Delta \partial^\a \u_0 = \tfrac{1}{\lambda_0} \mathcal{P} \partial^\a R_{\u_0} \,,
	  \end{aligned}
	\end{equation*}
	which implies that by multiplying by $\partial^\a \u_0 $ and integrating by parts over $x \in \R^3$,
	\begin{equation}\label{High-u0}
	  \begin{aligned}
	    \| \nabla \partial^\a \u_0 \|^2_{L^2} = - \tfrac{1}{\lambda_0} \l \partial^\a R_{\u_0}, \partial^\a \u_0 \r \,.
	  \end{aligned}
	\end{equation}
	Here the term $R_{\u_0}$ is defined in \eqref{R-u0}.
	
	We then add the inequalities \eqref{High-ni-theta-m} and \eqref{High-u0} together and sum up for all $|\a| \leq s$. Recalling the definitions of $\mathscr{E}_s (t)$ and $\mathscr{D}_s (t)$ in \eqref{Es} and \eqref{Ds}, respectively, we thereby obtain
	\begin{equation}\label{High-ED}
	  \begin{aligned}
	    \tfrac{1}{2} \tfrac{\d}{\d t} \mathscr{E}_s (t) + & \mathscr{D}_s (t) \leq \sum_{i=1}^N  \chi_i \sum_{|\a| \leq s} \l \partial^\a R_{n_i} , \partial^\a n_i \r + \sum_{|\a| \leq s} \chi_\theta \l \partial^\a R_\theta , \partial^\a \theta \r \\
	    & + \sum_{|\a| \leq s} \chi_m \l \partial^\a R_m , \partial^\a m \r + \sum_{|\a| \leq s} \chi_\phi \l \partial^\a R_m , \partial^\a \phi \r - \sum_{|\a| \leq s} \tfrac{1}{\lambda_0} \l \partial^\a R_{\u_0}, \partial^\a \u_0 \r \,.
	  \end{aligned}
	\end{equation}
	It remains to control the four quantities in the right-hand side of \eqref{High-ED} in terms of the energy $\mathscr{E}_s (t)$ and the dissipative rate $\mathscr{D}_s (t)$. We emphasize that the following embedding inequalities will be frequently used:
	\begin{equation}\label{Embedding-infty}
	  \begin{aligned}
	    & \| f \|_{L^\infty} \leq C_\infty \| f \|_{H^2} \quad \textrm{for some constant } C_\infty > 0 \,, \\
	    & \| f \|_{L^3} \lesssim \| f \|_{L^2}^\frac{1}{2} \| \nabla f \|_{L^2}^\frac{1}{2} \,, \\
	    & \| f \|_{L^4} \lesssim \| f \|^\frac{1}{4}_{L^2} \| \nabla f \|^\frac{3}{4}_{L^2} \lesssim \| f \|_{H^1} \,.
	  \end{aligned}
	\end{equation}
	
	{\em Step 1. Control of the quantity $ \sum_{i=1}^N  \chi_i \sum_{|\a| \leq s} \l \partial^\a R_{n_i} , \partial^\a n_i \r $.}\\
	\vspace*{-3mm}
	
	Recalling the definition of $R_{n_i}$ in \eqref{R-ni}, we have
	\begin{equation}\label{I1I2I3}
	  \begin{aligned}
	    \sum_{|\a| \leq s} \l \partial^\a R_{n_i} , \partial^\a n_i \r = & \underbrace{ - \sum_{|\a| \leq s} \l \partial^\a (\u_0 \cdot \nabla n_i), \partial^\a n_i \r }_{I_1} \ \underbrace{ - \tfrac{ z_i}{\eps \nu_i} \sum_{|\a| \leq s} \l \partial^\a (n_i m) , \partial^\a n_i \r }_{I_2} \\
	    & + \underbrace{ \tfrac{ z_i}{\nu_i} \sum_{|\a| \leq s} \l \partial^\a ( \nabla n_i \cdot \nabla \phi ) , \partial^\a n_i \r }_{I_3} \,.
	  \end{aligned}
	\end{equation}
	Based on the incompressibility $\nabla \cdot \u_0 = 0$, we derive that
	\begin{equation}\label{I1}
	  \begin{aligned}
	    I_1 = & - \sum_{1 \leq |\a| \leq s} \sum_{0 \neq \a' \leq \a} C_\a^{\a'} \l \partial^{\a'} \u_0 \cdot \nabla \partial^{\a - \a'} n_i , \partial^\a n_i \r \\
	    \lesssim & \sum_{1 \leq |\a| \leq s} \sum_{0 \neq \a' \leq \a} \| \partial^{\a'} \u_0 \|_{L^\infty} \| \nabla \partial^{\a - \a'} n_i \|_{L^2} \| \partial^\a n_i \|_{L^2} \\
	    \lesssim & \sum_{1 \leq |\a| \leq s} \sum_{0 \neq \a' \leq \a} \| \partial^{\a'} \u_0 \|_{H^2} \| \nabla \partial^{\a - \a'} n_i \|_{L^2} \| \partial^\a n_i \|_{L^2} \\
	    \lesssim & \| \u_0 \|_{H^{s+2}} \| \nabla n_i \|_{H^s} \| n_i \|_{H^s} \lesssim \mathscr{E}_s^\frac{1}{2} (t) \mathscr{D}_s (t) \,,
	  \end{aligned}
	\end{equation}
	where we have used the H\"older inequality and the first inequality of \eqref{Embedding-infty}. For the term $I_2$, we can infer that
	\begin{equation}\label{I2}
	  \begin{aligned}
	    I_2 = & - \tfrac{ z_i}{\eps \nu_i} \sum_{|\a| \leq s} \sum_{\a' \leq \a} C_\a^{\a'} \l \partial^{\a'} n_i \partial^{\a - \a'} m , \partial^\a n_i \r \\
	    \lesssim &  \sum_{|\a| \leq s} \sum_{\a' \leq \a} \| \partial^{\a'} n_i \|_{L^3} \| \partial^{\a - \a'} m \|_{L^3} \| \partial^\a n_i \|_{L^3} \\
	    \lesssim & \sum_{|\a| \leq s} \sum_{\a' \leq \a} \| \partial^{\a'} n_i \|^\frac{1}{2}_{L^2} \| \nabla \partial^{\a'} n_i \|^\frac{1}{2}_{L^2} \| \partial^{\a - \a'} m \|^\frac{1}{2}_{L^2} \| \nabla \partial^{\a - \a'} m \|^\frac{1}{2}_{L^2} \| \partial^\a n_i \|^\frac{1}{2}_{L^2} \| \nabla \partial^\a n_i \|^\frac{1}{2}_{L^2} \\
	    \lesssim & \| n_i \|_{H^s} \|  \nabla n_i \|_{H^s} \| \nabla m \|^\frac{1}{2}_{H^s} \| m \|^\frac{1}{2}_{H^s} \lesssim \mathscr{E}_s^\frac{1}{2} (t) \mathscr{D}_s (t) \,,
	  \end{aligned}
	\end{equation}
	where the second inequality is derived from the second inequality of \eqref{Embedding-infty}. Next, from the first inequality of \eqref{Embedding-infty}, we deduce that
	\begin{equation}\label{I3}
	  \begin{aligned}
	    I_3 = & \tfrac{ z_i}{\nu_i} \sum_{|\a| \leq s} \sum_{\a' \leq \a} C_\a^{\a'} \l \nabla \partial^{\a'} n_i \cdot \nabla \partial^{\a - \a'} \phi , \partial^\a n_i \r \\
	    \lesssim & \sum_{|\a| \leq s} \Big( \| \nabla n_i \|_{L^\infty} \| \nabla \partial^\a \phi \|_{L^2} + \sum_{0 \neq \a' \leq \a} \| \nabla \partial^{\a'} n_i \|_{L^2} \| \nabla \partial^{\a - \a'} \phi \|_{L^\infty} \Big) \| \partial^\a n_i \|_{L^2} \\
	    \lesssim & \sum_{|\a| \leq s} \Big( \| \nabla n_i \|_{H^1} \| \nabla \partial^\a \phi \|_{L^2} + \sum_{0 \neq \a' \leq \a} \| \nabla \partial^{\a'} n_i \|_{L^2} \| \nabla \partial^{\a - \a'} \phi \|_{H^1} \Big) \| \partial^\a n_i \|_{L^2} \\
	    \lesssim & \| n_i \|_{H^s} \| \nabla n_i \|_{H^s} \big( \| \nabla \phi \|_{H^s} + \| \Delta \phi \|_{H^s} \big) \lesssim \mathscr{E}_s^\frac{1}{2} (t) \mathscr{D}_s (t) \,.
	  \end{aligned}
	\end{equation}
	Consequently, from plugging the bounds \eqref{I1}, \eqref{I2} and \eqref{I3} into the equality \eqref{I1I2I3}, we infer that
	\begin{equation}\label{Control-R-n}
	  \begin{aligned}
	    \sum_{i=1}^N  \chi_i \sum_{|\a| \leq s} \l \partial^\a R_{n_i} , \partial^\a n_i \r \lesssim \mathscr{E}_s^\frac{1}{2} (t) \mathscr{D}_s (t) \,.
	  \end{aligned}
	\end{equation}\\
	
	{\em Step 2. Control of the quantity $\sum_{|\a| \leq s} \chi_m \l \partial^\a R_m , \partial^\a m \r$.} \\
	\vspace*{-3mm}
	
	From the definition of the term $R_m$ in the \eqref{R-m}, we have
	  \begin{align}\label{II1II2II3II4}
	    \no & \sum_{|\a| \leq s} \chi_m \l \partial^\a R_m , \partial^\a m \r = \underbrace{ - \sum_{|\a| \leq s} \chi_m \l \partial^\a (\u_0 \cdot \nabla m) , \partial^\a m \r}_{I\!I_1} \\
	    \no & \underbrace{ - \tfrac{\chi_m }{\eps} \sum_{i=1}^N \tfrac{z_i^2}{\nu_i} \sum_{|\a| \leq s} \l \partial^\a (n_i m) , \partial^\a m \r }_{I\!I_2} + \underbrace{ \chi_m k_B \sum_{i=1}^N \tfrac{z_i}{\nu_i} \sum_{|\a| \leq s} \l \partial^\a \Delta (n_i \theta) , \partial^\a m \r }_{I\!I_3} \\
	    & + \underbrace{ \chi_m \sum_{i=1}^N \tfrac{z_i^2}{\nu_i} \sum_{|\a| \leq s} \l \partial^\a (\nabla n_i \cdot \nabla \phi) , \partial^\a m \r }_{I\!I_4} \,.
	  \end{align}
	By employing the same arguments in deriving the bound \eqref{I1}, we yield that
	\begin{equation}\label{II1}
	  \begin{aligned}
	    I\!I_1 \lesssim \| \u_0 \|_{H^{s+2}} \| \nabla m \|_{H^s} \| m \|_{H^s} \lesssim \mathscr{E}_s^\frac{1}{2} (t) \mathscr{D}_s (t) \,.
	  \end{aligned}
	\end{equation}
	Moreover, it is deduced from the same derivations of the inequality \eqref{I2} that
	\begin{equation}\label{II2}
	  \begin{aligned}
	    I\!I_2 \lesssim \sum_{i=1}^N \| n_i \|^\frac{1}{2}_{H^s} \| \nabla n_i \|^\frac{1}{2}_{H^s} \| m \|_{H^s} \| \nabla m \|_{H^s} \lesssim \mathscr{E}_s^\frac{1}{2} (t) \mathscr{D}_s (t) \,.
	  \end{aligned}
	\end{equation}
	For the term $I\!I_3$, we deduce from the first and the third inequalities of \eqref{Embedding-infty} that
	\begin{align}\label{II3}
	  \no  I\!I_3 = & - \chi_m k_B \sum_{i=1}^N \tfrac{z_i}{\nu_i} \sum_{|\a| \leq s}  \sum_{\a' \leq \a} C_\a^{\a'} \l \nabla \partial^{\a'} n_i \partial^{\a-\a'} \theta + \partial^{\a-\a'} n_i \nabla \partial^{\a'} \theta, \nabla \partial^\a m \r \\
	  \no  \lesssim & \sum_{i=1}^N \sum_{|\a| \leq s} \big( \| \nabla n_i \|_{L^\infty} \| \partial^\a \theta \|_{L^2} + \| \nabla \partial^\a n_i \|_{L^2} \| \theta \|_{L^\infty} \big) \| \nabla \partial^\a m \|_{L^2} \\
	  \no  + & \sum_{i=1}^N \sum_{|\a| \leq s} \big( \| \nabla \theta \|_{L^\infty} \| \partial^\a n_i \|_{L^2} + \| \nabla \partial^\a \theta \|_{L^2} \| n_i \|_{L^\infty} \big) \| \nabla \partial^\a m \|_{L^2} \\
	   \no + & \sum_{i=1}^N \sum_{|\a| \leq s} \sum_{0 \neq \a' < \a} \big( \| \nabla \partial^{\a'} n_i \|_{L^4} \| \partial^{\a - \a'} \theta \|_{L^4} + \| \partial^{\a - \a'} n_i \|_{L^4} \| \nabla \partial^{\a'} \theta \|_{L^4} \big) \| \nabla \partial^\a m \|_{L^2} \\
	   \no \lesssim & \sum_{i=1}^N \sum_{|\a| \leq s} \big( \| \nabla n_i \|_{H^2} \| \partial^\a \theta \|_{L^2} + \| \nabla \partial^\a n_i \|_{L^2} \| \theta \|_{H^2} \big) \| \nabla \partial^\a m \|_{L^2} \\
	   \no + & \sum_{i=1}^N \sum_{|\a| \leq s} \big( \| \nabla \theta \|_{H^2} \| \partial^\a n_i \|_{L^2} + \| \nabla \partial^\a \theta \|_{L^2} \| n_i \|_{H^2} \big) \| \nabla \partial^\a m \|_{L^2} \\
	   \no + & \sum_{i=1}^N \sum_{|\a| \leq s} \sum_{0 \neq \a' < \a} \big( \| \nabla \partial^{\a'} n_i \|_{H^1} \| \partial^{\a - \a'} \theta \|_{H^1} + \| \partial^{\a - \a'} n_i \|_{H^1} \| \nabla \partial^{\a'} \theta \|_{H^1} \big) \| \nabla \partial^\a m \|_{L^2} \\
	   \lesssim & \sum_{i=1}^N \big( \| \nabla n_i \|_{H^s} \| \theta \|_{H^s} + \| \nabla \theta \|_{H^s} \| n_i \|_{H^s} \big) \| \nabla m \|_{H^s} \lesssim \mathscr{E}_s^\frac{1}{2} (t) \mathscr{D}_s (t) \,.
	\end{align}
	Furthermore, the similar derivations of the bound $I_3$ in \eqref{I3} tell us that
	\begin{equation}\label{II4}
	  \begin{aligned}
	    I\!I_4 \lesssim \sum_{i=1}^N \| m \|_{H^s} \| \nabla n_i \|_{H^s} \big( \| \nabla \phi \|_{H^s} + \| \Delta \phi \|_{H^s} \big) \lesssim \mathscr{E}_s^\frac{1}{2} (t) \mathscr{D}_s (t) \,.
	  \end{aligned}
	\end{equation}
	Consequently, from substituting the bounds \eqref{II1}, \eqref{II2}, \eqref{II3} and \eqref{II4} into the relation \eqref{II1II2II3II4}, we deduce that
	\begin{equation}\label{Control-R-m}
	  \begin{aligned}
	    \sum_{|\a| \leq s} \chi_m \l \partial^\a R_m , \partial^\a m \r \lesssim \mathscr{E}_s^\frac{1}{2} (t) \mathscr{D}_s (t) \,.
	  \end{aligned}
	\end{equation}\\
	
	{\em Step 3. Control of the quantity $ - \sum_{|\a| \leq s} \tfrac{1}{\lambda_0} \l \partial^\a R_{\u_0}, \partial^\a \u_0 \r $.} \\
	
	\vspace*{-3mm}
	
	From the definition of the term $R_{\u_0}$ in \eqref{R-u0} and the incompressibility $\nabla \cdot \u_0 = 0$, we straightforwardly compute that
	\begin{equation}\label{III1+2+3}
	  \begin{aligned}
	    - \sum_{|\a| \leq s} \tfrac{1}{\lambda_0} \l \partial^\a R_{\u_0}, \partial^\a \u_0 \r = & \underbrace{ - \sum_{1 \leq |\a| \leq s} \tfrac{1}{\lambda_0} \l \partial^\a (m \nabla \phi), \partial^\a \u_0 \r }_{I\!I\!I_1} \ \underbrace{ - \tfrac{1}{\lambda_0} \l m \nabla \phi, \u_0 \r }_{I\!I\!I_2} \,.
	  \end{aligned}
	\end{equation}
	Via employing the calculus inequalities
	\begin{equation}\label{Calculus-Inq}
	  \begin{aligned}
	    \| f g \|_{H^s} \lesssim \| f \|_{H^s} \| g \|_{H^s}
	  \end{aligned}
	\end{equation}
	for $s > \tfrac{3}{2}$, which can be referred to Lemma 3.4 of \cite{Majda-Bertozzi-2002-BK}, in Page 98 for instance, we know that
	\begin{equation}\label{III1}
	  \begin{aligned}
	    I\!I\!I_1 & \lesssim \sum_{1 \leq |\a| \leq s} \| \partial^\a (m \nabla \phi) \|_{L^2} \| \partial^\a \u_0 \|_{L^2} \lesssim \| m \nabla \phi \|_{H^s} \| \nabla \u_0 \|_{H^s} \\
	    & \lesssim \| m \|_{H^s} \| \nabla \phi \|_{H^s} \| \nabla u_0 \|_{H^s} \lesssim \mathscr{E}_s^\frac{1}{2} (t) \mathscr{D}_s (t) \,.
	  \end{aligned}
	\end{equation}
	Now we control the quantity $I\!I\!I_2$. Since the norm $\| \u_0 \|_{L^2}$ is absent, we will employ the following important relation:
	\begin{equation}\label{Key-Cnc-u0}
	  \begin{aligned}
	    - \tfrac{1}{\lambda_0} \l m \nabla \phi , \u_0 \r = - \tfrac{\eps}{\lambda_0} \l \nabla \phi \otimes \nabla \phi , \nabla \u_0 \r \,,
	  \end{aligned}
	\end{equation}
	derived from $\nabla \cdot \u_0 = 0$, $- \Delta \phi = \tfrac{1}{\eps} m$ and integration by parts over $x \in \R^3$. We therefore see that
	  \begin{align}\label{III2}
	    I\!I\!I_2 = & - \tfrac{\eps}{\lambda_0} \l \nabla \phi \otimes \nabla \phi , \nabla \u_0 \r \lesssim \| \nabla \phi \|^2_{L^3} \| \nabla \u_0 \|_{L^3} \lesssim \| \nabla \phi \|^2_{H^1} \| \nabla \u_0 \|_{H^1} \lesssim \mathscr{E}_s^\frac{1}{2} (t) \mathscr{D}_s (t) \,,
	  \end{align}
	where we have utilized the H\"older inequality and the Sobolev embedding $H^1 (\R^3) \hookrightarrow L^3 (\R^3)$. Therefore, plugging the inequalities \eqref{III1} and \eqref{III2} into the equality \eqref{III1+2+3} reduces to
	\begin{equation}\label{Control-R-u0}
	  \begin{aligned}
	    - \sum_{|\a| \leq s} \tfrac{1}{\lambda_0} \l \partial^\a R_{\u_0}, \partial^\a \u_0 \r \lesssim  \mathscr{E}_s^\frac{1}{2} (t) \mathscr{D}_s (t) \,.
	  \end{aligned}
	\end{equation}
	
	\vspace*{3mm}
	
	{\em Step 4. Control of the quantity $ \sum_{|\a| \leq s} \chi_\theta \l \partial^\a R_\theta , \partial^\a \theta \r $.}
	
	\vspace*{3mm}
	
	Recalling the definition of the term $R_\theta$ in \eqref{R-theta}, we compute that
	  \begin{align}\label{IV-(1-5)}
	    \no \sum_{|\a| \leq s} & \chi_\theta \l \partial^\a R_\theta , \partial^\a \theta \r = \underbrace{ \sum_{i,j=1}^N \tfrac{ k_B^2 c_i z_j \delta_j \chi_\theta}{\nu_j} \sum_{|\a| \leq s} \l \partial^\a ( f ({\bf n}) n_i m ) , \partial^\a \theta \r }_{I\!V_1} \\
	    \no & \underbrace{ - \sum_{i,j=1}^N \tfrac{k_B^3 c_i \chi_\theta}{\nu_j} \sum_{|\a| \leq s} \l \partial^\a ( f ({\bf n}) n_i \Delta n_j ) , \partial^\a \theta \r }_{I\!V_2} \ \underbrace{ - \sum_{i=1}^N b k_B c_i \chi_\theta \sum_{|\a| \leq s} \l \partial^\a ( f ({\bf n}) n_i \Delta \theta ) , \partial^\a \theta \r }_{I\!V_3} \\
	    & \qquad \qquad \underbrace{ - a \chi_\theta \sum_{|\a| \leq s} \l \partial^\a (\u_0 \cdot \nabla \theta) , \partial^\a \theta \r }_{I\!V_4} + \underbrace{ a \chi_\theta \sum_{|\a| \leq s} \l \partial^\a ( f({\bf n}) R_\theta^\star ) , \partial^\a \theta \r }_{I\!V_5} \,,
	  \end{align}
	where $R_\theta^\star$ and $f ({\bf n})$ are defined in \eqref{R-theta-*} and \eqref{f(n)}, respectively.
	
	We first decompose the term $I\!V_1$ into three parts:
	\begin{equation}\label{IV1-1+2+3}
	  \begin{aligned}
	    I\!V_1 = & \underbrace{ \sum_{i,j=1}^N \tfrac{ k_B^2 c_i z_j \delta_j \chi_\theta}{\nu_j} \sum_{|\a| \leq s} \sum_{\a' \leq \a} C_\a^{\a'} \l f({\bf n}) \partial^{\a'} n_i \partial^{\a - \a'} m, \partial^\a \theta \r }_{I\!V_{11}} \\
	    & + \underbrace{ \sum_{i,j=1}^N \tfrac{ k_B^2 c_i z_j \delta_j \chi_\theta}{\nu_j} \sum_{|\a| \leq s} \l \partial^\a f({\bf n}) n_i m, \partial^\a \theta \r }_{I\!V_{12}} \\
	    & + \underbrace{ \sum_{i,j=1}^N \tfrac{ k_B^2 c_i z_j \delta_j \chi_\theta}{\nu_j} \sum_{|\a| \leq s} \sum_{0 \neq \a' < \a} C_\a^{\a'} \l \partial^{\a'} f({\bf n})  \partial^{\a - \a'} ( n_i  m), \partial^\a \theta \r }_{I\!V_{13}} \,.
	  \end{aligned}
	\end{equation}
	It is implied by the H\"older inequality and the second inequality of \eqref{Embedding-infty} that
	  \begin{align}\label{IV11}
	    \no I\!V_{11} \lesssim & \sum_{i=1}^N \sum_{|\a| \leq s} \sum_{\a' \leq \a} \| f ({\bf n}) \|_{L^\infty} \| \partial^{\a'} n_i \|_{L^3} \| \partial^{\a - \a'} m \|_{L^3} \| \partial^\a \theta \|_{L^3} \\
	    \no \lesssim & \sum_{i=1}^N \sum_{|\a| \leq s} \sum_{\a' \leq \a} \| f ({\bf n}) \|_{L^\infty} \| \partial^{\a'} n_i \|^\frac{1}{2}_{L^2} \| \nabla \partial^{\a'} n_i \|^\frac{1}{2}_{L^2} \\
	    \no & \qquad \qquad \quad \times \| \partial^{\a - \a'} m \|^\frac{1}{2}_{L^2} \| \nabla \partial^{\a - \a'} m \|^\frac{1}{2}_{L^2} \| \partial^\a \theta \|^\frac{1}{2}_{L^2} \| \nabla \partial^\a \theta \|^\frac{1}{2}_{L^2} \\
	    \no \lesssim & \sum_{i=1}^N \| f ({\bf n}) \|_{L^\infty} \| n_i \|^\frac{1}{2}_{H^s} \| \theta \|^\frac{1}{2}_{H^s} \| \nabla n_i \|^\frac{1}{2}_{H^s} \| \nabla \theta \|^\frac{1}{2}_{H^s} \| m \|^\frac{1}{2}_{H^s} \| \nabla m \|^\frac{1}{2}_{H^s} \\
	    \lesssim & \| f ({\bf n}) \|_{L^\infty} \mathscr{E}_s^\frac{1}{2} (t) \mathscr{D}_s (t) \,.
	  \end{align}
	From the H\"older inequality, the inequalities in \eqref{Embedding-infty} and the bound \eqref{f(n)-norm} in Lemma \ref{Lmm-ChainRule}, we infer that
	  \begin{align}\label{IV12}
	    \no I\!V_{12} \lesssim & \sum_{i=1}^N \| f ({\bf n}) \|_{L^\infty} \| n_i \|_{L^3} \| m \|_{L^3} \| \theta \|_{L^3} \\
	    \no & + \sum_{i=1}^N \sum_{1 \leq |\a| \leq s} \| \partial^\a f ({\bf n }) \|_{L^2} \| n_i \|_{L^\infty} \| m \|_{L^\infty} \| \partial^\a \theta \|_{L^2} \\
	    \no \lesssim & \sum_{i=1}^N \| f ({\bf n}) \|_{L^\infty} \| n_i \|^\frac{1}{2}_{L^2} \| \nabla n_i \|^\frac{1}{2}_{L^2} \| m \|^\frac{1}{2}_{L^2} \| \nabla m \|^\frac{1}{2}_{L^2} \| \theta \|^\frac{1}{2}_{L^2} \| \nabla \theta \|^\frac{1}{2}_{L^2} \\
	    \no & + \sum_{i=1}^N \sum_{1 \leq |\a| \leq s} \| n_i \|_{H^2} \| m \|_{H^2} \| \partial^\a \theta \|_{L^2} \\
	    \no & \qquad \qquad \times \sum_{v=1}^N \sum_{w=1}^s \| \tfrac{\partial^w f}{\partial n_v^w} ({\bf n}) \|_{L^\infty} \| \nabla n_v \|_{H^{s-1}} \big( 1 + \| \nabla n_v \|^{s-1}_{H^{s-1}} \big) \\
	    \no \lesssim & K({\bf n}) \sum_{i=1}^N \Big( \big( 1 + \| n_i \|^{s-1}_{H^s} \big) \| n_i \|_{H^s} \| \theta \|_{H^s} \| m \|_{H^s} \| \nabla n_i \|_{H^s} \\
	    \no & \qquad \qquad \qquad + \| n_i \|^\frac{1}{2}_{H^s} \| \theta \|^\frac{1}{2}_{H^s} \| \nabla n_i \|^\frac{1}{2}_{H^s} \| \nabla \theta \|^\frac{1}{2}_{H^s} \| m \|^\frac{1}{2}_{H^s} \| \nabla m \|^\frac{1}{2}_{H^s} \Big) \\
	    \lesssim & K ({\bf n}) \big( 1 + \mathscr{E}_s^\frac{s}{2} (t) \big) \mathscr{E}_s^\frac{1}{2} (t) \mathscr{D}_s (t) \,,
	  \end{align}
	where $K ({\bf n})$ is given in \eqref{K(n)}.
	
	We now apply the H\"older inequality, the last inequality in \eqref{Embedding-infty}, the calculus inequality in \eqref{Calculus-Inq} and the inequality \eqref{f(n)-norm} in Lemma \ref{Lmm-ChainRule} to dominate the quantity $I\!V_{13}$. More precisely, we have
	\begin{equation}\label{IV13}
	  \begin{aligned}
	    I\!V_{13} \lesssim & \sum_{i=1}^N \sum_{|\a| \leq s} \sum_{0 \neq \a' < \a} \| \partial^{\a'} f ({\bf n}) \|_{L^4} \| \partial^{\a - \a'} (n_i m) \|_{L^4} \| \partial^\a \theta \|_{L^2} \\
	    \lesssim & \sum_{i=1}^N \sum_{|\a| \leq s} \sum_{0 \neq \a' < \a} \| \partial^{\a'} f ({\bf n}) \|_{H^1} \| \partial^{\a - \a'} (n_i m) \|_{H^1} \| \partial^\a \theta \|_{L^2} \\
	    \lesssim & \sum_{i=1} \| \nabla f ({\bf n}) \|_{H^{s-1}} \| n_i m \|_{H^s} \| \theta \|_{H^s} \\
	    \lesssim & K ({\bf n}) \sum_{i=1} \big( 1 + \| n_i \|^{s-1}_{H^s} \big) \| n_i \|_{H^s} \| \theta \|_{H^s} \| \nabla n_i \|_{H^s} \| m \|_{H^s} \\
	    \lesssim & K ({\bf n}) \big( 1 + \mathscr{E}_s^\frac{s-1}{2} (t) \big) \mathscr{E}_s (t) \mathscr{D}_s (t) \,.
	  \end{aligned}
	\end{equation}
	Collecting the all relations \eqref{IV1-1+2+3}, \eqref{IV11}, \eqref{IV12} and \eqref{IV13}, we immediately obtain
	\begin{equation}\label{IV1}
	  \begin{aligned}
	    I\!V_1 \lesssim K ({\bf n}) \big( 1 + \mathscr{E}_s^\frac{s}{2} (t) \big) \mathscr{E}_s^\frac{1}{2} (t) \mathscr{D}_s (t) \,,
	  \end{aligned}
	\end{equation}
	where the $K({\bf n})$ is defined in \eqref{K(n)}.
	
	Secondly, we devote ourselves to control the term $I\!V_2$. We split it into three parts:
	\begin{align}\label{IV2-1+2+3}
	  \no I\!V_2 = & \underbrace{ \sum_{i,j=1}^N \tfrac{k_B^3 c_i \chi_\theta}{\nu_j} \sum_{|\a| \leq s} \l \nabla [ f({\bf n}) n_i \partial^\a \theta ], \nabla \partial^\a n_j \r }_{I\!V_{21}} \\
	  \no & \underbrace{ - \sum_{i,j=1}^N \tfrac{k_B^3 c_i \chi_\theta}{\nu_j} \sum_{1 \leq |\a| \leq s} \l \partial^\a ( f({\bf n}) n_i ) \Delta n_j , \partial^\a \theta \r }_{I\!V_{22}} \\
	  & \underbrace{ - \sum_{i,j=1}^N \tfrac{k_B^3 c_i \chi_\theta}{\nu_j} \sum_{|\a| \leq s} \sum_{0 \neq \a' < \a} C_\a^{\a'} \l \partial^{\a'} ( f({\bf n}) n_i ) \Delta \partial^{\a - \a'} n_j , \partial^\a \theta \r }_{I\!V_{23}} \,.
	\end{align}
	It is derived from the inequality \eqref{f(n)-norm} in Lemma \ref{Lmm-ChainRule} and the first inequality in \eqref{Embedding-infty} that
	  \begin{align}\label{IV21}
	    \no I\!V_{21} \lesssim & \sum_{i,j=1}^N \sum_{|\a| \leq s} \| \nabla \partial^\a n_j \|_{L^2} \big( \| \nabla \partial^\a \theta \|_{L^2} \| f ({\bf n}) \|_{L^\infty} \| n_i \|_{L^\infty} \\
	    \no & \qquad \qquad \qquad + \| \nabla f ({\bf n}) \|_{L^\infty} \| n_i \|_{L^\infty} \| \partial^\a \theta \|_{L^2} + \| f ({\bf n}) \|_{L^\infty} \| \nabla n_i \|_{L^\infty} \| \partial^\a \theta \|_{L^2} \big) \\
	    \no \lesssim & K({\bf n}) \sum_{i,j=1}^N \| \nabla n_j \|_{H^s} \Big( \| n_i \|_{H^s} \| \nabla \theta \|_{H^s} + \| \theta \|_{H^s} \| \nabla n_i \|_{H^s} \\
	    \no & \qquad \qquad \qquad \qquad \qquad \qquad + ( 1 + \| \nabla n_i \|_{H^2}^2 ) \| \theta \|_{H^s} \| n_i \|_{H^s} \| \nabla n_i \|_{H^2} \Big) \\
	    \no \lesssim & K({\bf n}) \sum_{i=1}^N ( 1 + \| n_i \|^3_{H^s} ) ( \| n_i \|_{H^s} + \| \theta \|_{H^s} ) \big( \| \nabla n_i \|^2_{H^s} + \| \nabla \theta \|^2_{H^s} \big) \\
	    \lesssim &  K({\bf n}) \big( 1 + \mathscr{E}_s^\frac{3}{2} (t) \big) \mathscr{E}_s^\frac{1}{2} (t) \mathscr{D}_s (t)\,,
	  \end{align}
	where the integer $s \geq 3$ is required and $K({\bf n})$ is given in \eqref{K(n)}. For the quantity $I\!V_{22}$, we deduce from the H\"older inequality, the first inequality in \eqref{Embedding-infty}, the calculus inequality \eqref{Calculus-Inq} and the inequality \eqref{f(n)-norm} in Lemma \ref{Lmm-ChainRule} that
	  \begin{align}\label{IV22}
	    \no I\!V_{22} \lesssim & \sum_{i,j=1}^N \sum_{1 \leq |\a| \leq s} \| \partial^\a ( f({\bf n}) n_i ) \|_{L^2} \| \Delta n_j \|_{L^\infty} \| \partial^\a \theta \|_{L^2} \\
	    \no \lesssim & \sum_{i,j=1}^N \sum_{1 \leq |\a| \leq s} \| \partial^\a ( f({\bf n}) n_j ) \|_{L^2} \| \Delta n_j \|_{H^2} \| \partial^\a \theta \|_{L^2} \\
	    \no \lesssim & \sum_{i,j=1}^N \| \nabla ( f({\bf n}) n_i ) \|_{H^{s-1}} \| \nabla n_j \|_{H^3} \| \theta \|_{H^s} \\
	    \no \lesssim & \sum_{i,j=1}^N \big( ( \| \nabla
	    f({\bf n} ) n_i ) \|_{H^{s-1}} + \| f({\bf n}) \nabla n_i \|_{H^{s-1}} \big) \| \nabla n_j \|_{H^s} \| \theta \|_{H^s} \\
	    \no \lesssim & K({\bf n}) \sum_{i=1}^N ( 1 + \| n_i \|^{s-1}_{H^s} ) \| n_i \|_{H^s} \| \theta \|_{H^s} \| \nabla n_i \|^2_{H^s} \\
	    \lesssim & K({\bf n}) \big( 1 + \mathscr{E}_s^\frac{s-1}{2} (t) \big) \mathscr{E}_s (t) \mathscr{D}_s (t) \,.
	  \end{align}
	Based on the H\"older inequality, the last inequality in \eqref{Embedding-infty} and the inequality \eqref{f(n)-norm} in Lemma \ref{Lmm-ChainRule}, the quantity $I\!V_{23}$ can be bounded by
	\begin{align}\label{IV23}
	  \no I\!V_{23} \lesssim & \sum_{i,j=1} \sum_{|\a| \leq s} \sum_{0 \neq \a' < \a} \| \partial^{\a'} ( f({\bf n}) n_i ) \|_{L^4} \| \Delta \partial^{\a - \a'} n_j \|_{L^2} \| \partial^\a \theta \|_{L^4} \\
	  \no \lesssim & \sum_{i,j=1} \sum_{|\a| \leq s} \sum_{0 \neq \a' < \a} \| \partial^{\a'} ( f({\bf n}) n_i ) \|_{H^1} \| \nabla n_j \|_{H^s} \| \partial^\a \theta \|_{H^1} \\
	  \no \lesssim & \sum_{i,j=1} \| f ({\bf n}) \|_{H^s} \| n_i \|_{H^s} \| \nabla n_j \|_{H^s} \| \nabla \theta \|_{H^s} \\
	  \no \lesssim & K ({\bf n}) \sum_{i=1}^N ( 1 + \| n_i \|^{s-1}_{H^s} ) \| n_i \|^2_{H^s} \| \nabla n_i \|_{H^s} \| \nabla \theta \|_{H^s} \\
	  \lesssim & K({\bf n}) \big( 1 + \mathscr{E}_s^\frac{s-1}{2} (t) \big) \mathscr{E}_s (t) \mathscr{D}_s (t) \,.
	\end{align}
	Therefore, plugging the bounds \eqref{IV21}, \eqref{IV22} and \eqref{IV23} into the equality \eqref{IV2-1+2+3} reduces to
	\begin{equation}\label{IV2}
	  \begin{aligned}
	    I\!V_{2} \lesssim K({\bf n}) \big( 1 + \mathscr{E}_s^\frac{s}{2} (t) \big) \mathscr{E}_s^\frac{1}{2} (t) \mathscr{D}_s (t) \,.
	  \end{aligned}
	\end{equation}
	Moreover, from the similar arguments in estimating the bound \eqref{IV2}, we can deduce that
	\begin{equation}\label{IV3}
	  \begin{aligned}
	    I\!V_{3} \lesssim K({\bf n}) \big( 1 + \mathscr{E}_s^\frac{s}{2} (t) \big) \mathscr{E}_s^\frac{1}{2} (t) \mathscr{D}_s (t) \,.
	  \end{aligned}
	\end{equation}
	Furthermore, by employing the same arguments in \eqref{I1}, we have
	\begin{equation}\label{IV4}
	  \begin{aligned}
	    I\!V_4 \lesssim \| \u_0 \|_{H^{s+2}} \| \nabla \theta \|_{H^s} \| \theta \|_{H^s} \lesssim \mathscr{E}_s^\frac{1}{2} (t) \mathscr{D}_s (t) \,.
	  \end{aligned}
	\end{equation}
	
	Finally, we dominate the quantity $I\!V_5 = a \sum_{|\a| \leq s} \l \partial^\a ( f({\bf n}) R_\theta^\star ) , \partial^\a \theta \r$, where the term $R_\theta^\star$ is given in \eqref{R-theta-*} and $f ({\bf n})$ is mentioned as in \eqref{f(n)}. The term $I\!V_5$ can be specifically expressed as
	  \begin{align}\label{IV5-1...9}
	    \no I\!V_5 = & \underbrace{ \sum_{i=1} \tfrac{k_B^2}{\nu_i} \sum_{|\a| \leq s} \l \partial^\a [ f ({\bf n}) \Delta (n_i \theta) ] , \partial^\a \theta \r }_{I\!V_{51}} \ \underbrace{ - \sum_{i=1}^N \tfrac{ k_B z_i}{\eps \nu_i} \sum_{|\a| \leq s} \l \partial^\a [ f({\bf n}) n_i m ] , \partial^\a \theta \r }_{I\!V_{52}} \\
	    \no & + \underbrace{ \sum_{i=1}^N \tfrac{ k_B c_i z_i}{\nu_i} \sum_{|\a| \leq s} \l \partial^\a [ f({\bf n}) (\delta_i + n_i) \nabla \phi \cdot \nabla \theta ] , \partial^\a \theta \r }_{I\!V_{53}} \\
	    \no & + \underbrace{ \lambda_0 \sum_{|\a| \leq s} \l \partial^\a ( f({\bf n}) |\nabla \u_0|^2 ) , \partial^\a \theta \r }_{I\!V_{54}} \ \underbrace{ - \sum_{i=1}^N \tfrac{ k_B z_i}{\eps \nu_i} \sum_{|\a| \leq s} \l \partial^\a [ f({\bf n}) (\delta_i + n_i) m \theta ] , \partial^\a \theta \r }_{I\!V_{55}} \\
	    \no & + \underbrace{ \sum_{i=1}^N \tfrac{k_B^2 c_i}{\nu_i} \sum_{|\a| \leq s} \l \partial^\a [ f({\bf n}) \nabla (n_i + \delta_i \theta + n_i \theta ) \cdot \nabla \theta ] , \partial^\a \theta \r }_{I\!V_{56}} \\
	    \no & + \underbrace{ \sum_{i=1}^N \tfrac{k_B^2}{\nu_i} \sum_{|\a| \leq s} \l \partial^\a [ f({\bf n}) \Delta ( n_i + \delta_i \theta + n_i \theta ) \theta ] , \partial^\a \theta \r }_{I\!V_{57}} \\
	    \no & \underbrace{ - \sum_{i=1}^N \tfrac{k_B^2}{\nu_i} \sum_{|\a| \leq s} \l \partial^\a \big[ \tfrac{f({\bf n})}{\delta_i + n_i} (1 + \theta) \nabla n_i \cdot \nabla ( n_i + \delta_i \theta + n_i \theta ) \big] , \partial^\a \theta \r }_{I\!V_{58}} \\
	    & + \underbrace{ \sum_{i=1}^N \tfrac{1}{\nu_i} \sum_{|\a| \leq s} \l \partial^\a \big[ \tfrac{f({\bf n})}{\delta_i + n_i} | k_B \nabla ( n_i + \delta_i \theta + n_i \theta ) + e z_i ( \delta_i + n_i ) \nabla \phi |^2 \big] , \partial^\a \theta \r }_{I\!V_{59}} \,.
	  \end{align}
	  From the analogous arguments in estimating the quantities \eqref{IV1} and \eqref{IV2}, one can easily deduce the following bounds:
	  \begin{align}
	    \no I\!V_{51} \lesssim & K({\bf n}) \sum_{i=1}^N ( 1 + \| n_i \|^{s-1}_{H^s} ) \| \theta \|_{H^s} \| n_i \|_{H^s} ( \| \nabla n_i \|^2_{H^s} + \| \nabla \theta \|^2_{H^s} ) \\
	    \label{IV51} \lesssim & K({\bf n}) \big( 1 + \mathscr{E}_s^\frac{s-1}{2} (t) \big) \mathscr{E}_s (t) \mathscr{D}_s (t) \,, \\
	    \no I\!V_{52} \lesssim & K({\bf n}) \sum_{i=1}^N ( 1 + \| n_i \|^{s-1}_{H^s} ) \| n_i \|_{H^s} \| \theta \|_{H^s} \| \nabla n_i \|_{H^s} \| m \|_{H^s} \\
	    \label{IV52} \lesssim & K({\bf n}) \big( 1 + \mathscr{E}_s^\frac{s-1}{2} (t) \big) \mathscr{E}_s (t) \mathscr{D}_s (t) \,, \\
	    \no I\!V_{53} \lesssim & K({\bf n}) \sum_{i=1}^N ( 1 + \| n_i \|^s_{H^s} ) \| n_i \|_{H^s} \| \theta \|_{H^s} \| \nabla \phi \|_{H^s} \| \nabla \theta \|_{H^s} \\
	    \label{IV53} \lesssim & K({\bf n}) \big( 1 + \mathscr{E}_s^\frac{s}{2} (t) \big) \mathscr{E}_s (t) \mathscr{D}_s (t) \,, \\
	    \no I\!V_{54} \lesssim & K({\bf n}) \sum_{i=1}^N ( 1 + \| n_i \|^{s-1}_{H^s} ) \| n_i \|_{H^s} \| \theta \|_{H^s} \| \nabla \u_0 \|^2_{H^s} \\
	    \label{IV54} \lesssim & K({\bf n}) \big( 1 + \mathscr{E}_s^\frac{s-1}{2} (t) \big) \mathscr{E}_s (t) \mathscr{D}_s (t) \,, \\
	    \no I\!V_{55} \lesssim & K({\bf n}) \sum_{i=1}^N ( 1 + \| n_i \|^s_{H^s} ) \| \theta \|_{H^s} ( \| \nabla n_i \|_{H^s} + \| \nabla \theta \|_{H^s} ) \| m \|_{H^s} \\
	    \label{IV55} \lesssim & K({\bf n}) \big( 1 + \mathscr{E}_s^\frac{s}{2} (t) \big) \mathscr{E}_s^\frac{1}{2} (t) \mathscr{D}_s (t) \,, \\
	    \no I\!V_{56} \lesssim & K({\bf n}) \sum_{i=1}^N ( 1 + \| n_i \|^{s-1}_{H^s} ) ( 1 + \| n_i \|_{H^s} + \| \theta \|_{H^s} ) \| \theta \|_{H^s} ( \| \nabla n_i \|^2_{H^s} + \| \nabla \theta \|^2_{H^s} ) \\
	    \label{IV56} \lesssim & K({\bf n}) \big( 1 + \mathscr{E}_s^\frac{s}{2} (t) \big) \mathscr{E}_s^\frac{1}{2} (t) \mathscr{D}_s (t) \,, \\
	    \no I\!V_{57} \lesssim & K({\bf n}) \sum_{i=1}^N ( 1 + \| n_i \|^{s-1}_{H^s} ) ( 1 + \| n_i \|_{H^s} + \| \theta \|_{H^s} ) \| \theta \|^2_{H^s} ( \| \nabla n_i \|^2_{H^s} + \| \nabla \theta \|^2_{H^s} ) \\
	    \label{IV57} \lesssim & K({\bf n}) \big( 1 + \mathscr{E}_s^\frac{s}{2} (t) \big) \mathscr{E}_s (t) \mathscr{D}_s (t) \,.
	  \end{align}
	  Here, for simplicity, we omit the details of the derivations. Moreover, if we replace the function $f ({\bf n})$ by $ \tfrac{f({\bf n})}{\delta_i + n_i} $ in the arguments of the estimating $I\!V_1$ and $I\!V_2$ in \eqref{IV1} and \eqref{IV2}, respectively, we can also analogously estimate the term $I\!V_{58}$ as follows:
	  \begin{equation}\label{IV58}
	    \begin{aligned}
	      I\!V_{58} \lesssim & G({\bf n}) \sum_{i=1}^N ( 1 + \| n_i \|^{s-1}_{H^s} ) ( 1 + \| n_i \|^2_{H^s} + \| \theta \|^2_{H^s} ) \| n_i \|_{H^s} \| \theta \|_{H^s} ( \| \nabla n_i \|^2_{H^s} + \| \nabla \theta \|^2_{H^s} ) \\
	      \lesssim & G({\bf n}) \big( 1 + \mathscr{E}_s^\frac{s+1}{2} (t) \big) \mathscr{E}_s (t) \mathscr{D}_s (t) \,,
	    \end{aligned}
	  \end{equation}
	  where $G({\bf n})$ are defined in \eqref{G(n)}. By applying the similar arguments in estimating the quantity $I\!V_{58}$ in \eqref{IV58}, we can control the term $I\!V_{59}$ as follows:
	  \begin{equation}\label{IV59}
	    \begin{aligned}
	      I\!V_{59} \lesssim & G({\bf n}) \sum_{i=1}^N ( 1 + \| n_i \|^{s-1}_{H^s} ) ( 1 + \| n_i \|_{H^s} + \| \theta \|_{H^s} ) \\
	      & \qquad \qquad \times \| \theta \|_{H^s} \big( \| \nabla n_i \|^2_{H^s} + \| \nabla \theta \|^2_{H^s} + \| \nabla \phi \|^2_{H^s} \big) \\
	      \lesssim & G({\bf n}) \big( 1 + \mathscr{E}_s^\frac{s}{2} (t) \big) \mathscr{E}_s^\frac{1}{2} (t) \mathscr{D}_s (t) \,.
	    \end{aligned}
	  \end{equation}
	  Then, we substitute the inequalities \eqref{IV51}, \eqref{IV52}, \eqref{IV53}, \eqref{IV54}, \eqref{IV55}, \eqref{IV56}, \eqref{IV57}, \eqref{IV58} and \eqref{IV59} into the equality \eqref{IV5-1...9}, so that we obtain
	  \begin{equation}\label{IV5}
	    \begin{aligned}
	      IV_5 \lesssim \big( K({\bf n}) + G({\bf n}) \big) \big( 1 + \mathscr{E}_s^{\tfrac{s}{2} + 1} (t) \big) \mathscr{E}_s^\frac{1}{2} (t) \mathscr{D}_s (t) \,.
	    \end{aligned}
	  \end{equation}
	  If the bounds \eqref{IV1}, \eqref{IV2}, \eqref{IV3}, \eqref{IV4} and \eqref{IV5} are further plugged into the relation \eqref{IV-(1-5)}, we get
	  \begin{equation}\label{Control-R-theta}
	    \begin{aligned}
	      \sum_{|\a| \leq s} \chi_\theta \l \partial^\a R_\theta , \partial^\a \theta \r \lesssim \big( 1 + K({\bf n}) + G({\bf n}) \big) \big( 1 + \mathscr{E}_s^{\frac{s}{2} + 1} (t) \big) \mathscr{E}_s^\frac{1}{2} (t) \mathscr{D}_s (t) \,,
	    \end{aligned}
	  \end{equation}
	  where the symbols $K({\bf n})$ and $G({\bf n})$ are defined in \eqref{K(n)} and \eqref{G(n)}, respectively.
	
	  \vspace*{3mm}
	
	  {\em Step 5. Control the quantity $\sum_{|\a| \leq s} \chi_\phi \l \partial^\a R_m , \partial^\a \phi \r$.}
	
	  \vspace*{3mm}
	
	  Recalling that the energy $\mathscr{E}_s (t)$ and dissipative rate $\mathscr{D}_s (t)$ do not involve the norm $\| \phi \|_{L^2}$, we therefore shall avoid the $L^2$-norm of $\phi$ (without derivative of $\phi$) when dominating the quantity $\sum_{|\a| \leq s} \chi_\phi \l \partial^\a R_m , \partial^\a \phi \r$. Then
	  \begin{equation}\label{V1V2}
	    \begin{aligned}
	      \sum_{|\a| \leq s} \chi_\phi \l \partial^\a R_m , \partial^\a \phi \r = \underbrace{ \chi_\phi \l R_m , \phi \r }_{V_1} + \underbrace{ \sum_{1 \leq |\a| \leq s} \chi_\phi \l \partial^\a R_m , \partial^\a \phi \r }_{V_2} \,,
	    \end{aligned}
	  \end{equation}
	  where the quantity $V_1$, who explicitly involves a $\phi$ without derivative, should be dealt carefully. Recalling the $R_m$ is defined in \eqref{R-m} and $- \Delta \phi = \tfrac{1}{\eps} m$, we can derive
	  \begin{equation*}
	    \begin{aligned}
	      \l R_m , \phi \r = & \langle - \u_0 \cdot \nabla m + k_B \sum_{i=1}^N \tfrac{z_i}{\nu_i} \Delta (n_i \theta) - \tfrac{1}{\eps} \sum_{i=1}^N \tfrac{z_i^2}{\nu_i} n_i m + \sum_{i=1}^N \tfrac{z_i^2}{\nu_i} \nabla n_i \cdot \nabla \phi , \phi \rangle \\
	      = & \eps \l \u_0 \cdot \nabla \Delta \phi , \phi \r + \sum_{i=1}^N \tfrac{z_i^2}{\nu_i} \l n_i \Delta \phi + \nabla n_i \cdot \nabla \phi , \phi \r - k_B \sum_{i=1}^N \tfrac{z_i}{\nu_i} \l \nabla (n_i \theta) , \nabla \phi \r \,.
	    \end{aligned}
	  \end{equation*}
	  The integration by parts over $x \in \R^3$ and the divergence-free of $\u_0$ reduce to
	  \begin{equation}\label{Key-Rlts}
	    \begin{aligned}
	      & \l \u_0 \cdot \nabla \Delta \phi, \phi \r = - \l \u_0 , \nabla \tfrac{1}{2} |\nabla \phi|^2 \r - \l \nabla \otimes \nabla \phi , \nabla \u_0 \r = - \l \nabla \phi \otimes \nabla \phi , \nabla \u_0 \r \,, \\
	      & \l n_i \Delta \phi + \nabla n_i \cdot \nabla \phi , \phi \r = - \l n_i , |\nabla \phi|^2 \r \,.
	    \end{aligned}
	  \end{equation}
	  Then we have
	  \begin{equation}\label{V1}
	    \begin{aligned}
	      V_1 = & \chi_\phi \l R_m , \phi \r \lesssim \| \nabla \phi \|^2_{L^3} \big( \| \nabla \u_0 \|_{L^3} + \sum_{i=1}^N \| n_i \|_{L^3} \big) + \sum_{i=1}^N \| \nabla (n_i \theta) \|_{L^2} \| \nabla \phi \|_{L^2} \\
	      \lesssim & \| \nabla \phi \|^2_{H^1} ( \| \nabla \u_0 \|_{H^1} + \sum_{i=1}^N \| n_i \|_{H^1} ) \\
	       & + \sum_{i=1}^N \| \nabla \phi \|_{H^1} ( \| \nabla \theta \|_{H^1} + \| \nabla n_i \|_{H^1} ) ( \| n_i \|_{H^2} + \| \theta \|_{H^2} ) \\
	      \lesssim & \mathscr{E}_s^\frac{1}{2} (t) \mathscr{D}_s (t) \,.
	    \end{aligned}
	  \end{equation}
	  For the quantity $V_2$, we have
	  \begin{equation}\label{V21V22}
	    \begin{aligned}
	      V_2 = & \underbrace{ \eps \sum_{1 \leq |\a| \leq s} \chi_\phi \l \u_0 \cdot \nabla \Delta \partial^\a \phi , \partial^\a \phi \r }_{V_{21}} \\
	      & \left.
	      \begin{aligned}
	        & - \sum_{1 \leq |\a| \leq s} \chi_\phi \sum_{0 \neq \a' < \a} \l \partial^{\a '} \u_0 \cdot \nabla \partial^{\a - \a '} m , \partial^\a \phi \r \\
	        & - k_B \sum_{1 \leq |\a| \leq s} \sum_{i=1}^N \tfrac{z_i}{\nu_i} \l \nabla \partial^\a (n_i \theta) , \nabla \partial^\a \phi \r - \tfrac{1}{\eps} \sum_{1 \leq |\a| \leq s} \sum_{i=1}^N \tfrac{z_i^2}{\nu_i} \l \partial^\a (n_i m) , \partial^\a \phi \r \\
	        & + \sum_{1 \leq |\a| \leq s} \sum_{i=1}^N \tfrac{z_i^2}{\nu_i} \l \partial^\a (\nabla n_i \cdot \nabla \phi ), \partial^\a \phi \r
	      \end{aligned}
	      \right\} {V_{22}} \,,
	    \end{aligned}
	  \end{equation}
	  where the equation $- \Delta \phi = \tfrac{1}{\eps} m$ has been used. The key relation \eqref{Key-Rlts} reduce to
	  \begin{equation}\label{V21}
	    \begin{aligned}
	      V_{21} = & - \chi_\phi \eps \sum_{1 \leq |\a| \leq s} \l \nabla \partial^\a \phi \otimes \nabla \partial^\a \phi , \nabla \u_0 \r \lesssim \| \nabla \partial^\a \phi \|^2_{L^2} \| \nabla \u_0 \|_{L^\infty} \\
	      \lesssim & \| \nabla \phi \|^2_{H^s} \| \nabla \u_0 \|_{H^s} \lesssim \mathscr{E}_s^\frac{1}{2} (t) \mathscr{D}_s (t) \,.
	    \end{aligned}
	  \end{equation}
	  By the Sobolev embedding theory, one easily derives that
	    \begin{align}\label{V22}
	      \no V_{22} \lesssim & \| \nabla \phi \|^2_{H^s} \| \nabla \u_0 \|_{H^s} + \sum_{i=1}^N \| \nabla n_i \|_{H^s} \| \nabla \theta \|_{H^s} \| \nabla \phi \|_{H^s} \\
	      \no & + \sum_{i=1}^N \| n_i \|_{H^s} \| m \|_{H^s} \| \nabla \phi \|_{H^s} + \sum_{i=1}^N \| \nabla n_i \|_{H^s} \| \nabla \phi \|^2_{H^s} \\
	      \lesssim & \mathscr{E}_s^\frac{1}{2} (t) \mathscr{D}_s (t) \,.
	    \end{align}
	  Plugging \eqref{V21} and \eqref{V22} into \eqref{V21V22} implies that
	  \begin{equation}\label{V2}
	    \begin{aligned}
	      V_2 \lesssim \mathscr{E}_s^\frac{1}{2} (t) \mathscr{D}_s (t) \,.
	    \end{aligned}
	  \end{equation}
	  It is derived from substituting \eqref{V1} and \eqref{V2} into \eqref{V1V2} that
	  \begin{equation}\label{Control-Rm-phi}
	    \begin{aligned}
	      \sum_{|\a| \leq s} \chi_\phi \l \partial^\a R_m , \partial^\a \phi \r \lesssim \mathscr{E}_s^\frac{1}{2} (t) \mathscr{D}_s (t) \,.
	    \end{aligned}
	  \end{equation}
	  Consequently, the inequalities \eqref{High-ED}, \eqref{Control-R-n}, \eqref{Control-R-m}, \eqref{Control-R-u0}, \eqref{Control-R-theta} and \eqref{Control-Rm-phi} imply the a priori estimate inequality \eqref{Apriori-Est}, and the proof of Proposition \ref{Prop-Apriori} is finished.	
\end{proof}

\begin{proof}[\bf Proof of Theorem \ref{Main-Thm}]
	Based on the a priori estimate \eqref{Apriori-Est} in Proposition \ref{Prop-Apriori}, we now prove the main result of current paper by employing the continuity arguments.
	
	We first deal with the quantities $K({\bf n})$ and $G({\bf n})$ defined in \eqref{K(n)} and \eqref{G(n)}, respectively. One easily observes that there is a constant $\beta > 0$ such that
	\begin{equation}\label{KGn-1}
	  \begin{aligned}
	    K({\bf n}) + G({\bf n}) \leq \beta \| f({\bf n}) \|_{L^\infty} + \beta \sum_{i=1}^N \| \tfrac{f({\bf n})}{\delta_i + n_i} \|_{L^\infty} + \beta \| f({\bf n}) \|^{s+1}_{L^\infty} + \beta \sum_{i=1}^N \| \tfrac{f({\bf n})}{\delta_i + n_i} \|^{s+1}_{L^\infty} \,,
	  \end{aligned}
	\end{equation}
	where $ f({\bf n}) $ is given in \eqref{f(n)}. From the first Sobolev inequality in \eqref{Embedding-infty}, we deduce that
	\begin{equation}\label{f(n)-Bnd-1}
	  \begin{aligned}
	    a + k_B \sum_{i=1}^N c_i n_i \geq & a - k_B \sum_{i=1}^N c_i \| n_i \|_{L^\infty} \geq a - k_B C_\infty \max_{1 \leq i \leq N} \{c_i\} \sum_{i=1}^N \| n_i \|_{H^s} \\
	    \geq & a - k_B C_\infty \max_{1 \leq i \leq N} \{c_i\} \Big( \sum_{i=1}^N \tfrac{1}{\chi_i} \Big)^\frac{1}{2} \Big( \sum_{i=1}^N \chi_i \| n_i \|^2_{H^s} \Big)^\frac{1}{2} \\
	    \geq & a - \gamma_1 \mathscr{E}_s^\frac{1}{2} (t) \,,
	  \end{aligned}
	\end{equation}
	where $a > 0$ is given in \eqref{Coeffs-ab}, $C_\infty > 0$ is mentioned as in \eqref{Embedding-infty} and
	\begin{equation*}
	  \begin{aligned}
	    \gamma_1 = k_B C_\infty \max_{1 \leq i \leq N} \{c_i\} \Big( \sum_{i=1}^N \tfrac{1}{\chi_i} \Big)^\frac{1}{2} > 0 \,.
	  \end{aligned}
	\end{equation*}
	Similarly in \eqref{f(n)-Bnd-1}, one immediately has
	\begin{equation}\label{f(n)-Bnd-2}
	  \begin{aligned}
	    \delta_i + n_i \geq \delta_i - \tfrac{C_\infty}{\sqrt{\chi_i}} \mathscr{E}_s^\frac{1}{2} (t) \geq \delta_0 - \gamma_2 \mathscr{E}_s^\frac{1}{2} (t) \,,
	  \end{aligned}
	\end{equation}
	where
	\begin{equation*}
	  \begin{aligned}
	    \delta_0 = \min_{1 \leq i \leq N} \{ \delta_i \} > 0 \,, \ \gamma_2 = \max_{1 \leq i \leq N} \big\{ \tfrac{C_\infty}{\sqrt{\chi_i}} \big\} > 0 \,.
	  \end{aligned}
	\end{equation*}
	
	It is easy to know that
	\begin{equation*}
	  \begin{aligned}
	    \mathscr{E}_s (0) \leq \max \{ \tfrac{ \chi_\phi}{\eps}, \chi_1, \cdots, \chi_N, \chi_m z_1^2, \cdots, \chi_m z_N^2, a \chi_\theta \} E^\inn : = \gamma_0 E^\inn \,.
	  \end{aligned}
	\end{equation*}
	We now take $\xi_1 = \tfrac{1}{16} \min \big\{ \tfrac{a^2}{\gamma_0 \gamma_1^2}, \tfrac{\delta_0^2}{\gamma_0 \gamma_2^2 , } \big\} > 0$ such that if $E^\inn \leq \xi_1$, we derive from the relations \eqref{f(n)-Bnd-1} and \eqref{f(n)-Bnd-2} that
	\begin{equation*}
	  \begin{aligned}
	    & a + k_B \sum_{i=1}^N c_i n_i^\inn \geq a - \gamma_1 \mathscr{E}_s^\frac{1}{2} (0) \geq a - \gamma_1 \sqrt{\gamma_0 E^\inn} \geq \tfrac{3}{4} a > \tfrac{1}{2} a > 0 \,, \\
	    & \delta_i + n_i^\inn \geq \delta_0 - \gamma_2 \mathscr{E}_s^\frac{1}{2} (0) \geq \tfrac{3}{4} \delta_0 > \tfrac{1}{2} \delta_0 > 0 \,.
	  \end{aligned}
	\end{equation*}
	Thus, the relation \eqref{KGn-1} tells us that
	\begin{equation*}
	  \begin{aligned}
	    K({\bf n^\inn}) + G({\bf n^\inn}) \leq & \underbrace{ \tfrac{\beta}{\tfrac{3}{4} a} \big( 1 + \tfrac{1}{\tfrac{3}{4} \delta_0} \big) + \tfrac{\beta}{(\tfrac{3}{4} a)^{s+1}} \big( 1 + \big( \tfrac{1}{\tfrac{3}{4} \delta_0} \big)^{s+1} \big) }_{\gamma_3} \\
	    < & \underbrace{ \tfrac{\beta}{\tfrac{1}{2} a} \big( 1 + \tfrac{1}{\tfrac{1}{2} \delta_0} \big) + \tfrac{\beta}{(\tfrac{1}{2} a)^{s+1}} \big( 1 + \big( \tfrac{1}{\tfrac{1}{2} \delta_0} \big)^{s+1} \big) }_{\gamma_4} \,.
	  \end{aligned}
	\end{equation*}
	We further take
	\begin{equation*}
	  \begin{aligned}
	    \xi_0 = \min \Big\{ \xi_1, \tfrac{1}{ 4 \gamma_0 C_0^2 ( 1 + \gamma_4 )^2 ( 1 + (4 \xi_1)^{\frac{s}{2} + 1} )^2 }  \Big\} > 0 \,,
	  \end{aligned}
	\end{equation*}
	such that if $E^\inn \leq \xi_0$, we have
	\begin{equation}\label{InEngBnd}
	  \begin{aligned}
	    \mathscr{E}_s (0) \leq \gamma_0 \xi_1\,, \ C_0 ( 1 + \gamma_4 ) \big( 1 + \mathscr{E}_s^{\frac{s}{2} + 1} (0) \big) \mathscr{E}_s^\frac{1}{2} (0) \leq \tfrac{1}{2} \,.
	  \end{aligned}
	\end{equation}
	Now we introduce a number
	\begin{equation*}
	  \begin{aligned}
	    T^\star : = \sup \Bigg\{ \tau > 0; \sup_{t \in [0, \tau]} \mathscr{E}_s (t) \leq & 4 \gamma_0 \xi_1 \textrm{ and } C_0 (1 + \gamma_4) \sup_{t \in [0, \tau]} \big[ \big( 1 + \mathscr{E}_s^{\frac{s}{2} + 1} (t) \big) \mathscr{E}_s^\frac{1}{2} (t) \big] \leq 1 \Bigg\} \,.
	  \end{aligned}
	\end{equation*}
	Then the initial energy bound \eqref{InEngBnd} and the continuity of the energy functional $\mathscr{E}_s (t)$ imply that $T^\star > 0$.
	
	We claim that $T^\star = + \infty$. Indeed, if $T^\star < + \infty$, we easily deduce that for all $t \in [0, T^\star]$,
	\begin{equation*}
	  \begin{aligned}
	    1 + K({\bf n}) + G({\bf n}) \leq 1 + \gamma_4 \,,
	  \end{aligned}
	\end{equation*}
	which implies that the a priori estimate \eqref{Apriori-Est} reduces to
	\begin{equation*}
	  \begin{aligned}
	    \tfrac{\d}{\d t} \mathscr{E}_s (t) + 2 \mathscr{D}_s (t) \leq C_0 ( 1 + \gamma_4 ) \big( 1 + \mathscr{E}_s^{\frac{s}{2} + 1} (t) \big) \mathscr{E}_s^\frac{1}{2} (t) \mathscr{D}_s (t) \leq \mathscr{D}_s (t) \,,
	  \end{aligned}
	\end{equation*}
	namely,
	\begin{equation*}
	  \begin{aligned}
	     \tfrac{\d}{\d t} \mathscr{E}_s (t) + \mathscr{D}_s (t) \leq 0
	  \end{aligned}
	\end{equation*}
	for all $t \in [0, T^\star]$. Then, integrating the above inequality over $[0,t] \subseteq [0, T^\star]$ tells us that for all $t \in [0, T^\star]$,
	\begin{equation}\label{Global-1}
	  \begin{aligned}
	    \mathscr{E}_s (t) + \int_0^t \mathscr{D}_s (\tau) \d \tau \leq \mathscr{E}_s (0) \leq \gamma_0 \xi_0 < 4 \gamma_0 \xi_1 \,,
	  \end{aligned}
	\end{equation}
	which yields that
	\begin{equation*}
	  \begin{aligned}
	    C_0 (1 + \gamma_4) \big[ \big( 1 + \mathscr{E}_s^{\frac{s}{2} + 1} (t) \big) \mathscr{E}_s^\frac{1}{2} (t) \big] \leq \tfrac{1}{2} < 1 \,.
	  \end{aligned}
	\end{equation*}
	From the continuity of the energy functional $\mathscr{E}_s (t)$ and the definition of the number $T^\star$, we then deduce that there is a small $  \varsigma > 0$ such that for all $t \in [ 0, T^\star + \varsigma ]$,
	\begin{equation*}
	  \begin{aligned}
	    \mathscr{E}_s (t) \leq 4 \gamma_0 \xi_1 \,, \ C_0 (1 + \gamma_4) \big[ \big( 1 + \mathscr{E}_s^{\frac{s}{2} + 1} (t) \big) \mathscr{E}_s^\frac{1}{2} (t) \big] \leq 1 \,,
	  \end{aligned}
	\end{equation*}
	which contracts to the definition of $T^\star$. Thus $T^\star = + \infty$. So, the energy bound \eqref{Global-1} implies the global energy bound \eqref{Global-Energy}.
	
	Since $m \in L^\infty (\R^+; H^s)$, $\nabla m \in L^2 (\R^+; H^s)$ and $- \Delta \phi = \tfrac{1}{\eps} m$, one infers from the global energy bound \eqref{Global-Energy} that
	\begin{equation}\label{Rg-phi}
	  \nabla \phi, \Delta \phi \in L^\infty (\R^+; H^s) \,, \ \nabla \Delta \phi \in L^2 (\R^+; H^{s+2}) \,.
	\end{equation}
	It is derived from \eqref{Rg-phi}, the global energy bound \eqref{Global-Energy} and the $\u_0$-equation in \eqref{PNPF-Perturbed} that
	\begin{equation*}
	  \begin{aligned}
	    \lambda_0 \Delta \u_0 = \mathcal{P} ( m \nabla \phi ) \in L^\infty (\R^+; H^s) \cap L^2_{loc} (\R^+; H^{s+1}) \,,
	  \end{aligned}
	\end{equation*}
	where $\mathcal{P}$ is the Leray projection. Then, combining the global energy bound \eqref{Global-Energy}, we obtain
	$$\nabla \u_0 \in L^2 (\R^+; H^s) \,, \Delta \u_0 \in L^\infty (\R^+; H^s) \cap L^2_{loc} (\R^+; H^{s+1}) \,.$$
	Moreover, since $R_{\u_0} = \sum_{i=1}^N k_B \nabla (n_i \theta) + m \nabla \phi \in L^\infty (\R^+; H^{s-1}) \cap L^2_{loc} (\R^+; H^s)$, the $\u_0$-equation in \eqref{PNPF-Perturbed} reads
	\begin{equation*}
	  \nabla P_0 = \lambda_0 \Delta \u_0 - \sum_{i=1}^N k_B \nabla ( n_i + \delta_i \theta ) - R_{\u_0} \in L^\infty (\R^+; H^{s-1}) \cap L^2_{loc} (\R^+; H^s) \,.
	\end{equation*}
	Finally, based on the global energy bound \eqref{Global-Energy}, the $\u_i$-equation in \eqref{PNPF-1} implies that
	\begin{equation*}
	  \begin{aligned}
	    \u_i = \u_0 - \tfrac{k_B}{\nu_i} \nabla \theta + \tfrac{k_B}{\nu_i (\delta_i + n_i)} \nabla n_i (1 + \theta) - \tfrac{ z_i}{\nu_i} \nabla \phi\ (1 \leq i \leq N)
	  \end{aligned}
	\end{equation*}
	satisfy $\nabla \u_i \in L^2 (\R^+; H^{s-1})$. Consequently, the proof of Theorem \ref{Main-Thm} is finished.	
\end{proof}

\appendix

\section{Detailed derivations of the reformulations \eqref{PNPF-2} and \eqref{PNPF-Perturbed}}\label{Sec:Appendix}

In this section, we will give the details on deriving the reformulation \eqref{PNPF-2} of the original PNPF system \eqref{PNPF-1} and the perturbed equations \eqref{PNPF-Perturbed}. More precisely, we introduce the following lemma.

\begin{lemma}\label{Lmm-Syst-Transform}
	Let $m = \sum_{j=1}^N z_j \rho_j$ be the total electric charge. Then the system \eqref{PNPF-1} can be rewritten as the form \eqref{PNPF-2}. Furthermore, if we consider the following perturbations \eqref{Perturbations}, then the functions $( n_1, \cdots, n_N, \theta, m, \phi,  \u_0, P_0)$ subjects to the equations \eqref{PNPF-Perturbed}.
\end{lemma}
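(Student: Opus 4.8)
The plan is a direct substitution-and-bookkeeping argument; no analysis enters, only the product rule and the constitutive relations already built into \eqref{PNPF-1}. I would carry it out in two stages: first transform \eqref{PNPF-1} into the intermediate system \eqref{PNPF-2} by eliminating the ionic drift velocities $\u_i$ ($i\geq 1$), then substitute the perturbations \eqref{Perturbations} into \eqref{PNPF-2} and separate the linear from the nonlinear contributions to reach \eqref{PNPF-Perturbed}.

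\emph{First stage.} From the force balance in \eqref{PNPF-1} one solves $\u_i-\u_0=-\tfrac{k_B}{\nu_i\rho_i}\nabla(\rho_i T)-\tfrac{z_i}{\nu_i}\nabla\phi$, writes $\rho_i\u_i=\rho_i\u_0+\rho_i(\u_i-\u_0)$ in the continuity equation, and uses $\nabla\cdot\u_0=0$ (so that $\nabla\cdot(\rho_i\u_0)=\u_0\cdot\nabla\rho_i$ and $\nabla\cdot(\rho_i(\u_i-\u_0))=-\tfrac{k_B}{\nu_i}\Delta(\rho_i T)-\tfrac{z_i}{\nu_i}\nabla\cdot(\rho_i\nabla\phi)$) to obtain the first line of \eqref{PNPF-2}; the solvent momentum equation follows after rewriting $\sum_i\nu_i\rho_i(\u_0-\u_i)=\sum_i k_B\nabla(\rho_i T)+m\nabla\phi$, and the Poisson equation is kept as $-\Delta\phi=\tfrac1\eps m$. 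The $m$-equation comes from multiplying the $i$-th continuity equation by $z_i$ and summing, using the splitting $\sum_i\tfrac{k_B z_i}{\nu_i}\Delta(\rho_i T)=\tfrac{k_B}{\nu}\Delta(mT)+k_B\sum_i(\tfrac1{\nu_i}-\tfrac1\nu)z_i\Delta(\rho_i T)$ to pull out the dissipative operator $\tfrac{k_B}{\nu}\Delta(mT)$. The temperature equation is the heaviest step: substituting $\u_i=\u_0+(\u_i-\u_0)$ everywhere, the heat-capacity transport term splits into $(\sum_i k_B c_i\rho_i)\u_0\cdot\nabla T$ and $-\sum_i(\tfrac{k_B^2 c_i}{\nu_i}\nabla(\rho_i T)+\tfrac{k_B c_i z_i}{\nu_i}\rho_i\nabla\phi)\cdot\nabla T$; in the compression term $\sum_i k_B\rho_i(\nabla\cdot\u_i)T$ one uses $\nabla\cdot\u_i=\nabla\cdot(\u_i-\u_0)$ and the expansion $\nabla\cdot(\tfrac1{\rho_i}\nabla(\rho_i T))=\tfrac1{\rho_i}\Delta(\rho_i T)-\tfrac1{\rho_i^2}\nabla\rho_i\cdot\nabla(\rho_i T)$ together with $\Delta\phi=-\tfrac1\eps m$; the friction term becomes $\sum_i\tfrac1{\nu_i\rho_i}|k_B\nabla(\rho_i T)+z_i\rho_i\nabla\phi|^2$; moving $k\Delta T$ to the left and collecting yields the last block of \eqref{PNPF-2}.

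\emph{Second stage.} Inserting \eqref{Perturbations}, the constraint $\sum_j z_j\delta_j=0$ gives $m=\sum_j z_j n_j$, so $-\Delta\phi=\tfrac1\eps m$ stays linear in $(n_j)$, and linearity of $\Delta,\nabla$ gives $\rho_i T=\delta_i+n_i+\delta_i\theta+n_i\theta$, $\Delta(\rho_i T)=\Delta n_i+\delta_i\Delta\theta+\Delta(n_i\theta)$, and so on; the constant parts drop and each equation splits into a linear principal part plus explicit linear sources plus a genuinely nonlinear remainder, which is then named $R_{n_i},R_{\u_0},R_m,R_\theta$. The principal part of the $n_i$-equation is $\partial_t n_i-\tfrac{k_B}{\nu_i}\Delta n_i$, and that of the $m$-equation is $\partial_t m-\tfrac{k_B}{\nu}\Delta m+\tfrac1\eps(\sum\tfrac{z_i^2\delta_i}{\nu_i})m$, the damping coming from the linear part of $\sum\tfrac{z_i^2}{\nu_i}\nabla\cdot(\rho_i\nabla\phi)$ via $\Delta\phi=-\tfrac1\eps m$. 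For the temperature equation the time coefficient is $\sum_{i=0}^N k_B c_i\rho_i=a+\sum_i k_B c_i n_i=1/f({\bf n})$ with $a$ as in \eqref{Coeffs-ab}, and the coefficient $b$ of \eqref{Coeffs-ab} is assembled from $k$ plus the linear piece $\sum\tfrac{k_B^2\delta_i}{\nu_i}\Delta\theta$ extracted from $\sum\tfrac{k_B^2}{\nu_i}\Delta(\rho_i T)\,T$; to reach exactly the form \eqref{R-theta} one splits $a\partial_t\theta=(a+\sum k_B c_i n_i)\partial_t\theta-(\sum k_B c_i n_i)\partial_t\theta$ and eliminates the leftover $\partial_t\theta$ via the identity $\partial_t\theta=f({\bf n})(b\Delta\theta+(\textrm{sources})+(\textrm{nonlinear}))$ read off from the equation itself, which produces the three-term structure $R_\theta=-a\u_0\cdot\nabla\theta+\tfrac{a R_\theta^\star}{a+\sum k_B c_i n_i}-\tfrac{\sum k_B c_i n_i}{a+\sum k_B c_i n_i}(b\Delta\theta+\sum\tfrac{k_B^2}{\nu_i}\Delta n_i-\sum\tfrac{k_B z_i\delta_i}{\nu_i}m)$ of \eqref{R-theta}, with $R_\theta^\star$ collecting the non-transport nonlinear terms as in \eqref{R-theta-*}. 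A final termwise comparison then identifies the remainders with \eqref{R-ni}--\eqref{R-theta-*}.

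\emph{Main obstacle.} There is no conceptual difficulty; the hard part is purely organizational and lives entirely in the temperature equation. One must be careful that several products which look quadratic actually carry a linear piece---$\delta_i\Delta\theta$ hidden inside $\Delta(\rho_i T)$ feeds the heat-conduction coefficient $b$, and $\delta_i m$ hidden inside $(\delta_i+n_i)m$ is linear because $m=\sum_j z_j n_j$ and therefore contributes to the $\Delta\phi$-source---so the classification ``linear versus nonlinear'' is delicate. One must also carry the division by $a+\sum k_B c_i n_i$ through consistently, so that the linear principal part of the temperature equation is precisely $a\partial_t\theta-b\Delta\theta$ and all the remaining terms are absorbed into $R_\theta$ via $R_\theta^\star$. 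Once this separation is set up correctly, the rest is a routine, if lengthy, collection of terms.
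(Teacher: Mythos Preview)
Your proposal is correct and follows essentially the same approach as the paper's own proof: eliminate the $\u_i$ via the force balance, use $\nabla\cdot\u_0=0$ to rewrite the continuity, momentum, $m$-, and temperature equations in the form \eqref{PNPF-2}, then substitute the perturbations \eqref{Perturbations} and sort linear versus nonlinear pieces. In fact you supply considerably more detail on the second stage---particularly the mechanism producing the three-term structure of $R_\theta$ in \eqref{R-theta} via division by $a+\sum k_Bc_in_i$---than the paper, which simply asserts that plugging in \eqref{Perturbations} ``easily'' gives \eqref{PNPF-Perturbed}.
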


\begin{proof}
	We first derive the form \eqref{PNPF-2} from  the original system \eqref{PNPF-1}. It is easy to derive from the third equation of \eqref{PNPF-1} and the definition of $m = \sum_{j=1}^N z_j \rho_j$ that
	\begin{equation}
	\begin{aligned}
	- \Delta \phi = \tfrac{1}{\eps} m \,.
	\end{aligned}
	\end{equation}
	From plugging the second equation into the last second equation of \eqref{PNPF-1}, we deduce that
	\begin{equation}
	\begin{aligned}
	\lambda_0 \Delta \u_0 = \nabla P_0 + \sum_{i=1}^N k_B \nabla (\rho_i T) + m \nabla \phi \,.
	\end{aligned}
	\end{equation}
	Moreover, we derive from substituting the second equation into the first equation of \eqref{PNPF-1} that
	\begin{equation}
	\begin{aligned}
	\partial_t \rho_i + \u_0 \cdot \nabla \rho_i - \tfrac{k_B}{\nu_i} \Delta (\rho_i T) - \nabla \cdot \big( \tfrac{z_i}{\nu_i} \rho_i \nabla \phi \big) = 0
	\end{aligned}
	\end{equation}
	for $ i = 1, \cdots, N $.
	
	We next derive the evolution of the total electric charge $m$. From the definition of $m$ and the first equation of \eqref{PNPF-1}, we deduce that
	\begin{equation}
	\begin{aligned}
	\partial_t m = \sum_{j=1}^N z_j \partial_t \rho_j = - \sum_{j=1}^N \nabla \cdot (z_j \rho_j \u_j) \,.
	\end{aligned}
	\end{equation}
	From the second equation of \eqref{PNPF-1} and $m = \sum_{j=1}^N z_j \rho_j$, we deduce that
	\begin{equation}\label{rhoiui}
	\begin{aligned}
	\rho_i \u_i = \rho_i \u_0 - \tfrac{k_B}{\nu_i} \nabla (\rho_i T) - \tfrac{ z_i}{\nu_i} \rho_i \nabla \phi \,,
	\end{aligned}
	\end{equation}
	and then
	\begin{equation}
	\begin{aligned}
	\sum_{i=1}^N z_i \rho_i \u_i = m \u_0 - \tfrac{k_B}{\nu} \Delta (mT) - k_B \sum_{i=1}^N (\tfrac{1}{\nu_i} - \tfrac{1}{\nu}) z_i \nabla (\rho_i T) - \sum_{i=1}^N \tfrac{z_i^2}{\nu_i} \rho_i \nabla \phi \,,
	\end{aligned}
	\end{equation}
	where $\nu = \tfrac{N}{\sum_{j=1}^N \tfrac{1}{\nu_j}} > 0$ is the harmonic average of the viscosities $\nu_1$, $\nu_2$, $\cdots$, $\nu_N$. We thereby obtain
	\begin{equation}
	\begin{aligned}
	\partial_t m + \u_0 \cdot \nabla m - \tfrac{k_B}{\nu} \Delta (m T) = \sum_{i=1}^N \tfrac{z_i^2}{\nu_i} \nabla \cdot ( \rho_i \nabla \phi ) + k_B \sum_{i=1}^N (\tfrac{1}{\nu_i} - \tfrac{1}{\nu}) z_i \Delta (\rho_i T) \,,
	\end{aligned}
	\end{equation}
	and
	\begin{equation}
	\begin{aligned}
	\partial_t \rho_i + \u_0 \cdot \nabla \rho_0 - \tfrac{k_B}{\nu_i} \Delta (\rho_i T) - \nabla \cdot ( \tfrac{z_i}{\nu_i} \rho_i \nabla \phi ) = 0
	\end{aligned}
	\end{equation}
	for $i = 1, \cdots, N$, where $\nabla \cdot \u_0 = 0$ is utilized.
	
	It remains to compute the evolution of the temperature $T$.
	
	The second equation of \eqref{PNPF-1} firstly tells
	\begin{equation}
	\begin{aligned}
	\nu_i \rho_i | \u_i - \u_0 |^2 = \frac{1}{\nu_i \rho_i} | k_B \nabla (\rho_i T) + z_i \rho_i \nabla \phi |^2 \,.
	\end{aligned}
	\end{equation}
	Moreover, the relation \eqref{rhoiui} implies that
	\begin{equation}
	\begin{aligned}
	& \Big( \sum_{i=0}^N k_B c_i \rho_i \u_i \Big) \cdot \nabla T \\
	= & \sum_{i=0}^N k_B c_i \rho_i \u_0 \cdot \nabla T - \sum_{i=1}^N \tfrac{k_B^2 c_i}{\nu_i} \nabla (\rho_i T) \cdot \nabla T - \sum_{i=1}^N \tfrac{ k_B c_i z_i}{\nu_i} \rho_i \nabla \phi \cdot \nabla T \,,
	\end{aligned}
	\end{equation}
	and
	\begin{equation}
	\begin{aligned}
	\Big( \sum_{i=1}^N k_B \rho_i \nabla \cdot \u_i \Big) = \Big( - \sum_{i=1}^N \tfrac{k_B^2}{\nu_i} \Delta (\rho_i T) + \sum_{i=1}^N \frac{k_B^2}{\nu_i \rho_i} \nabla \rho_i \cdot \nabla (\rho_i T) + \sum_{i=1}^N \tfrac{k_B z_i}{\nu_i} \rho_i m  \Big) T \,.
	\end{aligned}
	\end{equation}
	Consequently, the forth equation of \eqref{PNPF-1} reads
	\begin{equation}
	\begin{aligned}
	\Big( \sum_{i=0}^N k_B c_i \rho_i \Big) \partial_t T - k \Delta T + \sum_{i=0}^N k_B c_i \rho_i \u_0 \cdot \nabla T - \sum_{i=1}^N \tfrac{k_B^2 c_i}{\nu_i} \nabla (\rho_i T) \cdot \nabla T \\
	= \lambda_0 |\nabla \u_0|^2 + \sum_{i=1}^N \tfrac{1}{\nu_i \rho_i} \left| k_B \nabla (\rho_i T) + z_i \rho_i \nabla \phi \right|^2 + \sum_{i=1}^N \tfrac{k_B C_i z_i}{\nu_i} \rho_i \nabla \phi \cdot \nabla T \\
	\qquad + \sum_{i=1}^N \Big( \tfrac{k_B^2}{\nu_i} \Delta (\rho_i T) - \tfrac{k_B^2}{\nu_i \rho_i} \nabla_i \cdot \nabla (\rho_i T) - \tfrac{k_B z_i}{\eps \nu_i} \rho_i m \Big) T \,.
	\end{aligned}
	\end{equation}
	Then we obtain the formulate \eqref{PNPF-2} of the PNPF system.
	
	Finally, from plugging the perturbations \eqref{Perturbations}, i.e., $\rho_i = \delta_i + n_i$, $T = 1 + \theta$, into the equations \eqref{PNPF-2}, we easily deduce the perturbed system \eqref{PNPF-Perturbed}, and then the proof of Lemma \ref{Lmm-Syst-Transform} is completed.
\end{proof}


\section*{Acknowledgment}

The author N. J. appreciates Prof. Chun Liu introduced this problem to him and provide many insights during the preparation of this work.

\bigskip

\bibliography{reference}

\end{document}